\documentclass{amsart}
\usepackage{fullpage}
\usepackage{graphicx}
\usepackage{amssymb}
\usepackage{amsmath}
\usepackage[margin=1in]{geometry}
\usepackage[toc,page]{appendix}
\vfuzz2pt 
\hfuzz2pt
\newtheorem{thm}{Theorem}[section]

\newtheorem{prop}[thm]{Proposition}



\title{Complex orthogonal  geometric structures of dimension three}
\author{Mayra Mendez}
\begin{document}
\maketitle

\begin{abstract}
A \emph{complex orthogonal (geometric) structure} on a complex manifold is a geometric structure locally modelled on a non-degenerate quadric. One of the first examples of such a structure on a compact manifold of dimension three  was constructed by Guillot. In this paper, we show that the same manifold carries a family of uniformizable complex orthogonal (geometric) structures which includes Guillot's structure; here, a structure is said to be uniformizable if it is a quotient of an invariant open set of a quadric by a Kleinian group. We also construct a family of uniformizable complex (geometric) projective structures on a related compact complex manifold of dimension three.
\end{abstract}

\section{Introduction} \label{s1}
A \emph{(classical) Kleinian group} $\Gamma$ is a discrete subgroup of the group of M\"{o}bius transformations which acts properly discontinuously on some non-empty invariant open set of the Riemann sphere. It is well-known that every classical Kleinian group $\Gamma$ splits the Riemann sphere into two sets: the limit set and the discontinuity region; the dynamics of the group $\Gamma$ is concentrated on the limit set, while the geometry lives in the discontinuity region. In fact, if the group acts freely on the discontinuity region $\Omega$, then the quotient $\Gamma\setminus\Omega$ inherits the local structure of the Riemann sphere: it is a Riemann surface such that the projection $\Omega\to \Gamma\setminus\Omega$ is a local biholomorphism; thus,  one may say that, in the classical setting, there is a strong relationship between the geometry and the dynamics of a Kleinian group.


The M\"{o}bius transformations can be characterized either as the conformal automorphisms of the Riemann sphere which preserve the orientation or as the biholomorphisms of the complex projective space of dimension one or, finally, as the projective transformations of the complex projective plane which preserve a one-dimensional non-degenerate quadric (conic). Accordingly, there are, at least, three natural generalizations of the classical Kleinian groups to higher dimensions:
\begin{itemize}
\item A \emph{conformal Kleinian group} is a discrete subgroup of the group $\mathrm{Conf}^{+}(\mathbb{S}^{n})$ of conformal orientation-preserving automorphisms of the $n$-dimensional sphere $\mathbb{S}^{n}$ that acts properly discontinuously on a non-empty invariant open set of $\mathbb{S}^{n}$.
\item A \emph{complex  Kleinian group} is a discrete subgroup of the group $\mathrm{PSL}(n+1,\mathbb{C})$ of projective transformations of the $n$-dimensional complex proyective space $\mathbb{CP}^{n}$ that acts properly discontinuously on a non-empty invariant open set of $\mathbb{CP}^{n}$.
\item A  \emph{complex orthogonal Kleinian group} is a discrete subgroup of the group  $\mathrm{PO}(n+1,\mathbb{C})$ of projective transformations which preserve the  $n$-dimensional non-degenerate quadric $\mathrm{Q}_{n}$ that acts properly discontinuously on a non-empty invariant open set of this quadric.
\end{itemize}
The geometric structure (see Goldman \cite{GoldmanGE}) determined by the quotient of a conformal Kleinian group, a complex Kleinian group or a complex orthogonal Kleinian group is called a \emph{uniformizable conformal structure}, a \emph{uniformizable complex projective structure} or a \emph{uniformizable complex orthogonal structure}, respectively.

Of these three kinds of groups, conformal Kleinian groups are the best-understood so far; a complete survey can be found in \cite{Kapovich}. Much work has also been done on higher-dimensional complex Kleinian groups; some of the first examples were given by Kato \cite{Kato}, Larusson  \cite{larusson}, Nori \cite{Nori} and Seade and Verjovsky \cite{Alberto}. Complex orthogonal Kleinian groups are the least studied at the moment; one of the first examples on dimension three was constructed by Guillot in \cite[p.\ 224, 225]{GuillotD}. The first result of this paper is a family of uniformizable complex Kleinian groups which includes Guillot's example.

\begin{thm} \label{res1}
Let $\Gamma \subset \mathrm{SL}(2,\mathbb{C})$ be a  torsion free, finitely-generated, (classical) Kleinian group with domain of discontinuity $\Omega$ in $\mathbb{CP}^{1}$.  For the quadric 
$$\mathrm{Q}_{3}:= \{ [z_{1}:z_{2}:z_{3}:z_{4}:z_{5}]:  z_{1}z_{5}-z_{2}z_{4}-z_{3}^{2}=0\}$$
and the embedding
\begin{eqnarray} \label{enc}
\mathrm{SL}(2,\mathbb{C}) & \hookrightarrow & \mathrm{Q}_{3}, \nonumber \\
\left( \begin{array}{cc}
            a & b \\
            c & d
           \end{array}
    \right) & \mapsto & [a:b:1:c:d],  \nonumber
\end{eqnarray}
consider the (unique) extension of the action of $\mathrm{SL}(2,\mathbb{C}) \times \mathrm{SL}(2,\mathbb{C})$ on $\mathrm{SL}(2,\mathbb{C})$, which sends $\big( (g,h),x \big)$ to $gxh^{-1},$ to $\mathrm{Q}_{3}$.

Then,  $\mathrm{Q}_{3}-\mathrm{SL}(2,\mathbb{C})$ is biholomorphic to $\mathbb{CP}^{1} \times \mathbb{CP}^{1}$ and, for every group homomorphism
 $u:\Gamma \to \mathrm{SL}(2,\mathbb{C})$,  such that  
 \begin{equation}\label{gamma}
 \Gamma_{u}:= \Big\{ \big( \gamma,u(\gamma) \big): \gamma\in\Gamma \Big\}
 \end{equation} 
 acts properly discontinuously on $\mathrm{SL}(2,\mathrm{C})$; then,  $ \Gamma_{u}$ acts properly discontinuously on 
 \begin{equation} \label{ugamma}
 U_{\Gamma}:=\mathrm{SL}(2,\mathbb{C}) \cup \big( \Omega\times\mathbb{CP}^{1} \big).
 \end{equation}
  Moreover, if $\Gamma\setminus\Omega$ is compact, then $U_{\Gamma}$ is maximal.
\end{thm}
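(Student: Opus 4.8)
The plan is to first make the geometry completely explicit, and then to reduce the dynamical statement to the two proper discontinuity hypotheses at hand together with the classical north--south dynamics on $\mathbb{CP}^1$. On $\mathrm{Q}_3$ the image of $\mathrm{SL}(2,\mathbb{C})$ is exactly the affine piece $\{z_3\neq 0\}$: a point with $z_3\neq 0$ normalises to $[a:b:1:c:d]$ and the quadric relation becomes $ad-bc=1$. Hence the complement is the hyperplane section $\{z_3=0\}\cap\mathrm{Q}_3=\{z_1z_5-z_2z_4=0\}$, which is the Segre image of $\mathbb{CP}^1\times\mathbb{CP}^1$. Writing a boundary point as a rank one matrix $\xi\eta^{\mathsf T}$ and using $gXh^{-1}=(g\xi)(h^{-\mathsf T}\eta)^{\mathsf T}$ shows that the extended action is $([\xi],[\eta])\mapsto(\gamma\cdot[\xi],\,u(\gamma)^{-\mathsf T}\cdot[\eta])$; thus $\Gamma$ acts on the first factor through its given M\"obius action and on the second factor through $u$. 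This proves the asserted biholomorphism and identifies $\Omega\times\mathbb{CP}^1$ as the part of the boundary lying over the first-factor discontinuity region. Since $\Lambda:=\mathbb{CP}^1-\Omega$ is closed, $\Lambda\times\mathbb{CP}^1$ is closed in $\mathrm{Q}_3$, so in fact $U_\Gamma=\mathrm{Q}_3\setminus(\Lambda\times\mathbb{CP}^1)$ is an open, $\Gamma_u$-invariant set, which is the convenient description to work with.

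I would then prove proper discontinuity on $U_\Gamma$ by contradiction through the two-compact-set criterion: assume there are distinct $\gamma_n\in\Gamma$ and points $x_n\to x$ and $y_n:=(\gamma_n,u(\gamma_n))\,x_n\to y$ with $x,y\in U_\Gamma$. Two inputs drive the argument. First, the hypothesis that $\Gamma_u$ acts properly discontinuously on $\mathrm{SL}(2,\mathbb{C})$ is equivalent to the statement that whenever $x_n$ stays in a compact subset of the interior, $y_n$ must leave every compact subset of $\mathrm{SL}(2,\mathbb{C})$, that is $y_n\to\{z_3=0\}$. Second, because $\Gamma$ acts properly discontinuously on $\Omega$, the distinct elements $\gamma_n$ leave every compact subset of $\mathrm{PSL}(2,\mathbb{C})$, so after passing to a subsequence they have north--south dynamics with attracting and repelling points $p,q\in\Lambda$, namely $\gamma_n\to p$ locally uniformly on $\mathbb{CP}^1\setminus\{q\}$.

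If $x$ is interior, the first input forces $y\in\{z_3=0\}\cap U_\Gamma=\Omega\times\mathbb{CP}^1$, hence $\mathrm{pr}_1(y)\in\Omega$. On the other hand the $\Gamma$-equivariant rational map $[z_1:z_4]$, the first-column direction, extends $\mathrm{pr}_1$ across the interior, and $\mathrm{pr}_1(y_n)=\gamma_n\cdot w_n$ where $w_n$ is the first-column direction of $X_n\,u(\gamma_n)^{-1}$; as long as $w_n$ stays off the repelling point $q$, the second input gives $\mathrm{pr}_1(y_n)\to p\in\Lambda$, contradicting $\mathrm{pr}_1(y)\in\Omega$ (if $y$ lies on the indeterminacy line of this chart one argues identically with the second-column chart $[z_2:z_5]$). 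The cases in which $x$ or $y$ is already a boundary point are similar but easier, since there one simply projects to the first factor and invokes proper discontinuity of $\Gamma$ on $\Omega$. \emph{The main obstacle} is the degenerate subcase in which both column directions of $X_n\,u(\gamma_n)^{-1}$ converge to the repelling point $q$: then $\gamma_n\cdot w_n$ may accumulate anywhere on $\mathbb{CP}^1$ and this soft argument breaks down. This is exactly a joint degeneration of $\gamma_n$ and $u(\gamma_n)$, and I expect to rule it out by a quantitative norm estimate showing that such a configuration would return $y_n$ to a fixed compact part of $\mathrm{SL}(2,\mathbb{C})$, contradicting the proper discontinuity of $\Gamma_u$ there; isolating and closing this estimate is the technical heart of the proof.

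For the final assertion, suppose $\Gamma\setminus\Omega$ is compact, so that $\Gamma$ is convex cocompact and every $\lambda\in\Lambda$ is a conical limit point. To show that $U_\Gamma$ cannot be enlarged it suffices to check that every point $(\lambda,w)\in\Lambda\times\mathbb{CP}^1=\mathrm{Q}_3\setminus U_\Gamma$ is a limit point of $\Gamma_u$. Using conicality I would choose $\gamma_n\to\infty$ whose attracting points approach $\lambda$ from within a compact core, produce a compact neighbourhood of $(\lambda,w)$ and infinitely many group elements carrying it into itself, and thereby conclude that $\Gamma_u$ fails to act properly discontinuously at any point of $\Lambda\times\mathbb{CP}^1$. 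Hence $U_\Gamma$ is the maximal invariant open set on which the action is properly discontinuous; the compactness hypothesis enters precisely to exclude parabolic fixed points, whose fibres could otherwise be partly adjoined to the discontinuity region.
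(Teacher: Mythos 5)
Your setup is sound and matches the paper's: the identification of $\mathrm{Q}_{3}-\mathrm{SL}(2,\mathbb{C})$ with the Segre quadric $\mathbb{CP}^{1}\times\mathbb{CP}^{1}$, the factorwise action $\big(\gamma(x),u(\gamma)^{\pm 1}(y)\big)$ on the boundary, and the proper discontinuity on $\Omega\times\mathbb{CP}^{1}$ (the paper's Proposition \ref{l0}) are all correct. But the proof of proper discontinuity on $U_{\Gamma}$ has a genuine hole exactly where you say the ``technical heart'' lies: the degenerate subcase in which both column directions of $X_{n}u(\gamma_{n})^{-1}$ converge to the repelling point $q$. Your plan to close it --- a norm estimate forcing $y_{n}$ back into a fixed compact part of $\mathrm{SL}(2,\mathbb{C})$ --- rests on a wrong expectation: such joint degenerations of $\gamma_{n}$ and $u(\gamma_{n})$ genuinely occur and produce accumulation on light geodesics inside $\mathrm{Q}_{2}$; they cannot be ``ruled out.'' What must be proved instead is that the resulting limit geodesics are \emph{vertical}, so that all accumulation lands in $\Lambda\times\mathbb{CP}^{1}$ rather than $\Omega\times\mathbb{CP}^{1}$. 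The paper does this by classifying simply divergent sequences through the Cartan decomposition of $\mathrm{SO}(4,\mathbb{C})$ into balanced, bounded and mixed distortion (Propositions \ref{sdb}, \ref{sda}, \ref{sdm}), excluding bounded distortion via proper discontinuity on $\Theta$ (Proposition \ref{fls}), and --- in the balanced case --- invoking Ahlfors' theorem that the limit set of a \emph{finitely generated} Kleinian group has Lebesgue measure zero, in order to find a point of $\Delta^{+}\cap(\Omega\times\mathbb{CP}^{1})$ whose transversal geodesic $l_{q}$ also meets $\Omega\times\mathbb{CP}^{1}$, producing dynamically related points that contradict Proposition \ref{l0}. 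Tellingly, your sketch never uses the finitely-generated hypothesis, while the paper needs it precisely at this point; soft north--south dynamics on the first factor alone cannot see the obstruction, because the dangerous relations move points along the second factor via the $u$-side.

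The maximality argument also contains a concrete error: $\Gamma\setminus\Omega$ compact does \emph{not} imply that $\Gamma$ is convex-cocompact with all limit points conical --- totally degenerate groups have $\Gamma\setminus\Omega$ a closed surface yet are not convex-cocompact and have non-conical limit points. Note that Theorem \ref{res1} deliberately assumes only $\Gamma\setminus\Omega$ compact (convex-cocompactness is reserved for Theorem \ref{res2}), so your step would silently erase this generality; moreover, even at a conical $\lambda$, your sketch gives no control of the second coordinate $w$, whose dynamics is governed by $u(\gamma_{n})$ rather than $\gamma_{n}$, so it does not show that every $(\lambda,w)$ obstructs proper discontinuity. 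The paper's maximality proof is softer and avoids all of this: $\Gamma_{u}\setminus(\Omega\times\mathbb{CP}^{1})$ is compact, being a locally trivial $\mathbb{CP}^{1}$-fibration over the compact $\Gamma\setminus\Omega$; if an invariant open $U\supsetneq U_{\Gamma}$ admitted a properly discontinuous action (any enlargement necessarily adds points of $\mathrm{Q}_{2}$, since $\mathrm{Q}_{3}-U_{\Gamma}=\Lambda\times\mathbb{CP}^{1}$), then $\Gamma_{u}\setminus(\Omega\times\mathbb{CP}^{1})$ would be a compact, hence closed, subset of the Hausdorff quotient $\Gamma_{u}\setminus(U\cap\mathrm{Q}_{2})$ whose complement is nonempty and open yet has empty interior, because $\Lambda$ has empty interior --- a contradiction. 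You should replace your conical-limit-point sketch by this quotient-topology argument, and supply the distortion analysis plus the Ahlfors measure-zero input for the main claim.
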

While this paper was in preparation, examples similar to those of this Theorem were obtained, independently using other techniques, by Gu\'eritaud, Guichard, Kassel and Wienhard (see
Theorem 4.1 and Observation 4.3 of \cite{artfanny}). However, our treatment of the subject is different; in principle, there may exist cases of our Theorem with $\Gamma$ not convex-cocompact and these would have no counterpart in the work of those authors. At the moment we do not know of any such examples. \\

The examples constructed by Guillot correspond to the quotient $\Gamma_{I} \setminus U_{\Gamma}$ of  this Theorem, where $I$ is the constant morphism and $\Gamma$ is a convex-cocompact Kleinian group. \\

The geometric study of the complex and the complex orthogonal Kleinian groups is complicated by the fact that there is no  good way to define an analogue of the discontinuity region in these cases. This makes the examples of the corresponding uniformizable  structures all the more valuable. One of the first examples of  a compact  manifold with a uniformizable complex orthogonal  structure of dimension three was given by Guillot in \cite[p.\ 224, 225]{GuillotD} as the  quotient of his example of complex Kleinian groups of dimension three.
The main result of the present paper says that Guillot's example is part of a family of uniformizable complex orthogonal  structures on the same manifold:

\begin{thm} \label{res2}
Let $\Gamma \subset \mathrm{SL}(2,\mathbb{C})$ be a torsion-free, convex-cocompact, (classical) Kleinian group with domain of discontinuity $\Omega$ in $\mathbb{CP}^{1}$. Consider the action  of $\mathrm{SL}(2,\mathbb{C}) \times \mathrm{SL}(2,\mathbb{C})$ on the three-dimensional non-degenerate quadric $\mathrm{Q}_{3}$, defined in Theorem \ref{res1}, the open set $U_{\Gamma} \subset \mathrm{Q}_{3}$, defined in ($\ref{ugamma}$), and for each group morphism  $u:\Gamma\to \mathrm{SL}(2,\mathbb{C})$, the group $\Gamma_{u}$, defined in ($\ref{gamma}$).
 
  Then, for each group morphism  $u:\Gamma\to \mathrm{SL}(2,\mathbb{C})$, sufficiently close to the constant morphism, $U_{\Gamma}$ is a maximal open set where $\Gamma_{u}$ acts properly discontinuously. Also, for all homomorphisms $u$, the quotients $\Gamma_{u} \setminus U_{\Gamma}$ are compact and diffeomorphic to each other.
\end{thm}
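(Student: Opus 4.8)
The plan is to reduce the first assertion to the properness of the $\Gamma_u$-action on the group $\mathrm{SL}(2,\mathbb{C})$ alone, and then to extract the maximality and the second assertion from Theorem \ref{res1} together with a deformation argument. Indeed, once I know that $\Gamma_u$ acts properly discontinuously on $\mathrm{SL}(2,\mathbb{C})$, Theorem \ref{res1} immediately yields that it acts properly discontinuously on $U_\Gamma$; and since a torsion-free convex-cocompact group has compact $\Gamma\setminus\Omega$, the same theorem gives the maximality of $U_\Gamma$. Thus everything rests on (i) a properness estimate on $\mathrm{SL}(2,\mathbb{C})$ valid for $u$ near the constant morphism, and (ii) the compactness and rigidity of the quotients, which I will treat by viewing $\Gamma_u\setminus U_\Gamma$ as a compact complex orthogonal structure (a $(\mathrm{PO}(4,\mathbb{C}),\mathrm{Q}_3)$-manifold in the notation of Section \ref{s1}) and invoking the Ehresmann--Thurston deformation principle.

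For the properness I would use the Cartan (singular-value) projection $\mu$ on $G=\mathrm{SL}(2,\mathbb{C})$, writing $\mu(g)=\log\sigma_1(g)$ for the largest singular value. Since $(\gamma,u(\gamma))$ sends $x$ to $\gamma x\,u(\gamma)^{-1}$, the relation $\gamma x\,u(\gamma)^{-1}=y$ with $x,y$ in a fixed compact set $C$ gives $\gamma=y\,u(\gamma)\,x^{-1}$, whence by submultiplicativity of the operator norm $\mu(\gamma)-\mu(u(\gamma))\le 2\sup_{C}\mu$. Consequently the action is properly discontinuous as soon as $\mu(\gamma)-\mu(u(\gamma))\to+\infty$ as $\gamma\to\infty$ in $\Gamma$. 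Now convex-cocompactness makes the orbit map $\Gamma\to\mathbb{H}^{3}$ a quasi-isometric embedding, so that $\mu(\gamma)\ge A\,|\gamma|-B$ for the word length $|\gamma|$ and constants $A>0$, $B$; on the other hand the subadditivity $\mu(gh)\le\mu(g)+\mu(h)$ gives $\mu(u(\gamma))\le |\gamma|\cdot\max_{s}\mu(u(s))$ over a generating set, and the right-hand maximum tends to $0$ as $u$ tends to the constant morphism. Hence $\mu(\gamma)-\mu(u(\gamma))\ge (A-\eps(u))|\gamma|-B$ with $\eps(u)\to0$, which tends to $+\infty$ once $u$ is close enough to the constant morphism. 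This proves the first assertion.

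For the second assertion I would first record the model case $u=I$: here $\Gamma_I$ acts through the first factor only, the boundary piece $\Gamma_I\setminus(\Omega\times\mathbb{CP}^{1})$ is the compact product $(\Gamma\setminus\Omega)\times\mathbb{CP}^{1}$, and $\Gamma_I\setminus U_\Gamma$ is precisely Guillot's compact manifold. For general $u$ near the constant morphism, $\Gamma_u\setminus(\Omega\times\mathbb{CP}^{1})$ is the flat $\mathbb{CP}^{1}$-bundle over the closed surface $\Gamma\setminus\Omega$ determined by $u$, hence compact, so the only ends to control are those of the open piece $\Gamma_u\setminus\mathrm{SL}(2,\mathbb{C})$. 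Rather than build a fundamental domain by hand---which is awkward because the two-sided action admits no global equivariant projection of $\mathrm{Q}_3$ onto $\mathbb{CP}^{1}$---I would argue by deformation: $\Gamma_I\setminus U_\Gamma$ is a closed complex orthogonal manifold with holonomy $\gamma\mapsto(\gamma,e)$ and developing map induced by the inclusion $U_\Gamma\hookrightarrow\mathrm{Q}_3$, and by the Ehresmann--Thurston holonomy principle (see \cite{GoldmanGE}) every holonomy sufficiently close to it---in particular $\gamma\mapsto(\gamma,u(\gamma))$ for $u$ near the constant morphism---is realized by a complex orthogonal structure on the same underlying closed manifold, which is therefore compact and diffeomorphic to Guillot's example.

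The main obstacle is the identification of the Ehresmann--Thurston deformation with the explicit quotient $\Gamma_u\setminus U_\Gamma$: a priori the principle only produces some closed structure with the prescribed holonomy, whereas I must know that its developing image is again exactly $U_\Gamma$, equivalently that the locus removed from $\mathrm{Q}_3$ deforms precisely to $\Lambda\times\mathbb{CP}^{1}$ (where $\Lambda=\mathbb{CP}^{1}\setminus\Omega$ is the limit set) with no further collapsing. This is where the maximality of Theorem \ref{res1} is essential: it pins down $U_\Gamma$ as the maximal domain of proper discontinuity containing $\mathrm{SL}(2,\mathbb{C})$, so the developing map of the deformed structure---an equivariant local biholomorphism whose image carries a properly discontinuous cocompact action---must have image $U_\Gamma$ and descend to a diffeomorphism onto $\Gamma_u\setminus U_\Gamma$. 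Finally, I read the phrase ``for all homomorphisms $u$'' in the second assertion as ranging over the proper locus, which by the first part contains a neighborhood of the constant morphism; for morphisms such as the diagonal inclusion the action is not even proper, so some restriction is forced.
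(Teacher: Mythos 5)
Your first half is sound: the Cartan-projection estimate $\mu(\gamma)-\mu(u(\gamma))\ge (A-\varepsilon(u))\,|\gamma|-B$ is a correct proof that $\Gamma_{u}$ acts properly discontinuously on $\mathrm{SL}(2,\mathbb{C})$ for $u$ near the constant morphism, and it is essentially the same mechanism as the paper's Proposition \ref{p1} (a convex-cocompact modification of Ghys' lemma; the paper also cites Kassel for exactly this statement). The reduction of maximality to Theorem \ref{res1} via compactness of $\Gamma\setminus\Omega$ is also exactly what the paper does. The problem is the second assertion, and it sits precisely at the step you flag yourself. The Ehresmann--Thurston principle produces \emph{some} complex orthogonal structure on the closed Guillot manifold with holonomy $\rho_{u}$, but nothing in that principle makes its developing map a covering onto its image, so your claim that the developing image ``carries a properly discontinuous cocompact action'' is unjustified: for a non-injective developing map the holonomy group can fail to act properly on the image, and the image can perfectly well meet the limit set $\Lambda_{F}$. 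Moreover, the maximality proved in Theorem \ref{res1} says only that no invariant open set \emph{strictly containing} $U_{\Gamma}$ supports a properly discontinuous action; it does not say that $U_{\Gamma}$ is the \emph{unique} maximal such domain, nor that every invariant open set with a properly discontinuous cocompact action equals (or is contained in) $U_{\Gamma}$. So even granting properness on the image, you cannot conclude the image is $U_{\Gamma}$; and even granting that, the induced map from the closed deformed manifold to $\Gamma_{u}\setminus U_{\Gamma}$ is only a covering, whose degree one would still have to establish. In short, compactness of $\Gamma_{u}\setminus U_{\Gamma}$ --- which is what the theorem asserts --- is never actually proved by your argument, since it is obtained only after the unproved identification.

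What the paper does instead is supply exactly the uniformity that your Ehresmann--Thurston route is missing: it proves that the action of $\Gamma$ on $\mathcal{V}\times U_{\Gamma}$, $\big(\gamma,(u,x)\big)\mapsto\big(u,(\gamma,u(\gamma))x\big)$, is properly discontinuous, by running the Frances-type dynamics of Propositions \ref{sdb} and \ref{sdm} for sequences $\big(g_{n},u_{n}(g_{n})\big)$ with $u_{n}\to u$ varying (not fixed), and showing via Propositions \ref{l0} and \ref{paokak} that all limit light geodesics of such sequences are vertical and contained in $\Lambda\times\mathbb{CP}^{1}$, hence disjoint from $U_{\Gamma}$. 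With this uniform properness, the projection $t:\Gamma\setminus(\mathcal{V}\times U_{\Gamma})\to\mathcal{V}$ is a submersion whose fiber over the constant morphism is Guillot's compact manifold, and the Ehresmann fibration lemma (applied after a resolution of singularities $r:X\to\mathcal{V}$ when $\mathcal{V}$ is singular, which your sketch also omits --- $\mathcal{V}$ can be singular arbitrarily close to the constant morphism, e.g.\ for Fuchsian groups) yields at once that all nearby fibers $\Gamma_{u}\setminus U_{\Gamma}$ are compact and mutually diffeomorphic. Your deformation idea can be salvaged, but only by proving an estimate uniform in $u$ of exactly this kind; as written, the maximality of $U_{\Gamma}$ is too weak a tool to identify the Ehresmann--Thurston structure with the quotient $\Gamma_{u}\setminus U_{\Gamma}$.
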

The examples constructed by A. Guillot correspond to the quotient of $\Gamma_{I} \setminus U_{\Gamma}$ of  this Theorem, where $I$ is the constant morphism. We call \emph{the Guillot manifold}, the quotient manifold (both, differentiable and complex) and \emph{the Guillot structure}, the complex orthogonal  structure determined by it.

\medskip

We will also construct uniformizable complex projective  structures on a related complex manifold of dimension three.

\begin{thm} \label{corart}
Let $\Gamma \subset \mathrm{SL}(2,\mathbb{C})$ be a torsion free, convex-cocompact, classical Kleinian group with domain of discontinuity $\Omega$ in $\mathbb{CP}^{1}$. Consider  $\mathbb{CP}^{3}$ as the projectivization of the space of $2 \times 2$ complex matrices and the action of $\mathrm{SL}(2,\mathbb{C}) \times \mathrm{SL}(2,\mathbb{C})$  on it that sends $\big( (g,h),[x] \big)$ to $gxh^{-1}$.  Then, there exists an open set $V_{\Gamma} \subset \mathbb{CP}^{3}$, such that, for each group homomorphism  $u:\Gamma\to \mathrm{SL}(2,\mathbb{C})$  sufficiently close to the constant morphism, $V_{\Gamma}$ is a maximal open set where
 $\Gamma_{u}$, defined in (\ref{gamma}),
acts properly discontinuously. Also, for all  $u$, the quotients $\Gamma_{u} \setminus V_{\Gamma}$ are compact and diffeomorphic to each other.
\end{thm}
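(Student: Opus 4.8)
The plan is to realize $\mathbb{CP}^3$ as a quotient of the quadric $\mathrm{Q}_3$ and to push the conclusions of Theorem \ref{res2} down through this quotient. Concretely, projection from the point $[0:0:1:0:0]\notin\mathrm{Q}_3$, namely $\pi([z_1:z_2:z_3:z_4:z_5])=[z_1:z_2:z_4:z_5]$, is a well-defined degree-two holomorphic map $\pi\colon\mathrm{Q}_3\to\mathbb{CP}^3$; reading $[z_1:z_2:z_4:z_5]$ as the matrix $\left(\begin{smallmatrix} z_1 & z_2\\ z_4 & z_5\end{smallmatrix}\right)$, the relation $z_3^{2}=z_1z_5-z_2z_4$ exhibits $\mathrm{Q}_3$ as the double cover of $\mathbb{CP}^3$ branched over the Segre quadric $\mathrm{Q}_2=\{[x]:\det x=0\}\cong\mathbb{CP}^1\times\mathbb{CP}^1$. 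The deck involution is $\sigma\colon[z_1:z_2:z_3:z_4:z_5]\mapsto[z_1:z_2:-z_3:z_4:z_5]$, which on the cell $\mathrm{SL}(2,\mathbb{C})$ of Theorem \ref{res1} is $x\mapsto -x$; thus $\sigma$ is the action of the central element $(-\mathrm{I},\mathrm{I})\in\mathrm{SL}(2,\mathbb{C})\times\mathrm{SL}(2,\mathbb{C})$, it commutes with every $\Gamma_u$, and $\pi$ is $\mathrm{SL}(2,\mathbb{C})\times\mathrm{SL}(2,\mathbb{C})$-equivariant.

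I would then set $V_\Gamma:=\pi(U_\Gamma)=(\mathbb{CP}^3\setminus\mathrm{Q}_2)\cup(\Omega\times\mathbb{CP}^1)$, an open set depending only on $\Gamma$. Since $\sigma$ fixes the branch divisor $\{z_3=0\}$ and preserves the cell $\mathrm{SL}(2,\mathbb{C})$, the set $U_\Gamma$ is $\sigma$-invariant, so $\pi^{-1}(V_\Gamma)=U_\Gamma$ and $\pi\colon U_\Gamma\to V_\Gamma$ is the quotient by $\langle\sigma\rangle$. Because $\Gamma$ is torsion-free, $-\mathrm{I}\notin\Gamma$, hence $\sigma\notin\Gamma_u$ and $\langle\Gamma_u,\sigma\rangle\cong\Gamma_u\times\mathbb{Z}/2$, giving $\Gamma_u\setminus V_\Gamma\cong\langle\Gamma_u,\sigma\rangle\setminus U_\Gamma=(\Gamma_u\setminus U_\Gamma)/\bar\sigma$.

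The transfer of proper discontinuity and of maximality is the formal heart of the argument and follows from $\pi$ being proper, finite and equivariant. Downward: for compact $K\subset V_\Gamma$ the preimage $\pi^{-1}(K)$ is compact, and equivariance turns the finiteness of $\{g:g\,\pi^{-1}(K)\cap\pi^{-1}(K)\neq\emptyset\}$ (Theorem \ref{res2}) into the finiteness of $\{g:gK\cap K\neq\emptyset\}$; the same computation run upward shows that a strictly larger $\Gamma_u$-invariant open $V'\supsetneq V_\Gamma$ on which $\Gamma_u$ acts properly discontinuously would pull back to $\pi^{-1}(V')\supsetneq U_\Gamma$ with the same property, contradicting maximality of $U_\Gamma$ for $u$ near the constant morphism. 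This yields proper discontinuity on $V_\Gamma$ and its maximality. For the quotient, $\Gamma_u\setminus V_\Gamma=(\Gamma_u\setminus U_\Gamma)/\bar\sigma$ is the quotient of the compact Guillot manifold by the induced holomorphic involution $\bar\sigma$; as $\bar\sigma$ is, in suitable local coordinates, the reflection $t\mapsto -t$ in the smooth divisor image of $\Omega\times\mathbb{CP}^1$, the quotient is again a compact complex manifold. To see that $\Gamma_u$ acts freely on $V_\Gamma$, so that this manifold carries the uniformizable complex projective structure inherited from $\mathbb{CP}^3$, it remains to check that no $(-\gamma,u(\gamma))$ with $\gamma\neq\mathrm{I}$ fixes a point of $U_\Gamma$: on the branch divisor this reduces to freeness of $\Gamma_u$, since $-\gamma$ and $\gamma$ agree on $\mathbb{CP}^1$, and on the cell it amounts to excluding $\gamma$ conjugate to $-u(\gamma)$, which for $u$ near the constant morphism would force $\gamma$ conjugate to $-\mathrm{I}$, i.e.\ $\gamma=-\mathrm{I}\notin\Gamma$.

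I expect the main obstacle to be the assertion that the quotients are mutually diffeomorphic for all $u$. This should be deduced from the corresponding clause of Theorem \ref{res2} by arranging the diffeomorphisms $\Gamma_u\setminus U_\Gamma\cong\Gamma_{u'}\setminus U_\Gamma$ to be $\bar\sigma$-equivariant; since $\sigma=(-\mathrm{I},\mathrm{I})$ is central and independent of $u$, an equivariant version of the structural-stability (Ehresmann) argument underlying Theorem \ref{res2}—trivializing the proper family of quotients over the space of morphisms $u$ compatibly with the fiberwise involution—produces such equivariant diffeomorphisms, which descend to $\Gamma_u\setminus V_\Gamma\cong\Gamma_{u'}\setminus V_\Gamma$. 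Verifying that the trivialization can indeed be chosen to commute with $\sigma$ is the one point that does not follow purely formally.
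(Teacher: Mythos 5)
Your geometric setup is exactly the paper's: your projection $\pi$ from $[0:0:1:0:0]$ is the paper's map $f$, your deck involution $\sigma$ is the paper's $j\colon[z_{1}:z_{2}:z_{3}:z_{4}:z_{5}]\mapsto[-z_{1}:-z_{2}:z_{3}:-z_{4}:-z_{5}]$ (the same projective transformation, since multiplying all coordinates by $-1$ turns one into the other), your $V_{\Gamma}=\pi(U_{\Gamma})$ is the paper's $V_{\Gamma}=f(U_{\Gamma})$, and your transfer of proper discontinuity and of maximality through the proper, finite, equivariant map is the paper's argument verbatim. Two of your steps, however, have genuine problems.

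First, the freeness check on the cell fails as written. You exclude $\gamma$ conjugate to $-u(\gamma)$ by arguing that for $u$ near the constant morphism this would force $\gamma$ conjugate to $-\mathrm{I}$. In the compact-open topology on $\mathrm{Hom}\big(\Gamma,\mathrm{SL}(2,\mathbb{C})\big)$, closeness to the constant morphism controls $u(\gamma)$ only for $\gamma$ in a fixed finite set (in practice, the generators); for $\gamma$ of large word length, $u(\gamma)$ need not be anywhere near $\mathrm{I}$, so the trace identity $\mathrm{tr}(\gamma)=-\mathrm{tr}\big(u(\gamma)\big)$ yields no contradiction. The correct argument needs no smallness of $u$ at all: if $(-\gamma,u(\gamma))$ fixes $x$ in the cell, then squaring gives $\big(\gamma^{2},u(\gamma^{2})\big)x=x$; since $\Gamma_{u}$ is torsion free and acts properly discontinuously on $\Theta$, it acts freely there, so $\gamma^{2}=\mathrm{I}$, hence $\gamma=\pm\mathrm{I}$; but $-\mathrm{I}\notin\Gamma$, and $\gamma=\mathrm{I}$ would give $x=-x$ in $\mathrm{SL}(2,\mathbb{C})$, which is impossible.

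Second, the clause you yourself flag — that the diffeomorphisms of Theorem \ref{res2} can be chosen $\bar\sigma$-equivariantly — is precisely where your route diverges from the paper, and it is left unproven. The paper does not descend the trivializations at all: it reruns the argument downstairs. Since $I\times f$ is proper, proper discontinuity of the $\Gamma$-action on $\mathcal{V}\times U_{\Gamma}$ pushes down to $\mathcal{V}\times V_{\Gamma}$; one then takes the resolution of singularities $r\colon X\to\mathcal{V}$ and applies the Ehresmann argument of Theorem \ref{res2} directly to the submersion $\Gamma\setminus\big(X\times V_{\Gamma}\big)\to X$, whose fibers are the quotients $\Gamma_{r(x)}\setminus V_{\Gamma}$ and whose fibers over points above $\mathrm{I}$ are already known to be compact. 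This avoids any equivariance question. Your route could be completed — the involution $\widetilde{\sigma}$ acts fiberwise on the total family, so one can average the Ehresmann horizontal distribution over the $\mathbb{Z}/2$-action to obtain an invariant trivialization — but as it stands this is a missing step, and the downstairs Ehresmann argument is the cheaper repair. Incidentally, your observation that $\Gamma_{u}\setminus V_{\Gamma}$ is the quotient of $\Gamma_{u}\setminus U_{\Gamma}$ by an involution that is locally a reflection in the smooth divisor coming from $\Omega\times\mathbb{CP}^{1}$, hence again a manifold, is more accurate than the paper's own assertion that $\widetilde{\Sigma}$ acts freely: it cannot act freely, since $j$ fixes $\mathrm{Q}_{2}$ pointwise.
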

The complex manifold of this Theorem  and  the uniformizable complex projective  structure induced by $\Gamma_{I} \setminus U_{\Gamma}$, where $I$ is the constant morphism, were also  found by Guillot.

The spaces of homomorphisms  from 
$\Gamma$ to $\mathrm{SL}(2,\mathbb{C}) \times \mathrm{SL}(2,\mathbb{C})$ of Theorems \ref{res2} and \ref{corart}, are considered with the compact-open topology. As we will see, the groups $\Gamma_{u}$ of these theorems are embedded as subgroups into $\mathrm{PO}(5,\mathbb{C})$ and $\mathrm{PO}(4,\mathbb{C})$.

If $u$ is close to the constant morphism, the homomorphism $\gamma \mapsto \big( \gamma, u(\gamma) \big)$ is close to $\gamma \mapsto \big( \gamma, I \big)$. Then, each group $\Gamma_{u}$ of Theorem \ref{res2}  determines an uniformizable complex orthogonal  structure on the Guillot manifold, which is close to the Guillot structure. If $u$ and $v$ are close to the constant morphism, the geometric structures determined by $\Gamma_{u}$ and $\Gamma_{v}$ coincide if and only if $u$ and $v$ are conjugate. The same phenomenon occurs
in the context of Theorem \ref{corart}. 

 \medskip

The proofs of Theorems~\ref{res1} and ~\ref{res2} go as follows. First, we will consider a torsion-free, finitely generated, (classical) Kleinian group $\Gamma\subset \mathrm{SL}(2,\mathbb{C})$ with domain of discontinuity $\Omega$ in $\mathbb{CP}^{1}$ and $u:\Gamma\to\mathrm{SL}(2,\mathbb{C})$ a group morphism. 
Then, we will recall that if we consider the intersection of $\mathrm{Q}_{3}$ with the projectivization of the hyperplane in $\mathbb{C}^{5}$ defined by $z_{3}=0$ we get the two-dimensional quadric $\mathrm{Q}_{2}$. We will also recall that there exists a $\big( \mathrm{SL}(2,\mathbb{C}) \times\mathrm{SL}(2,\mathbb{C}) \big)$-equivariant biholomorphism from $\mathrm{Q}_{2}$ to $\mathbb{CP}^{1} \times \mathbb{CP}^{1}$. Next, we  will consider the action of $\Gamma_{u}$ on $\mathbb{CP}^{1} \times \mathbb{CP}^{1}$ defined by   $$\Big( \big(\gamma,u(\gamma) \big) (x,y) \Big) \mapsto \big(\gamma(x),u(\gamma^{-1})(y) \big),$$ where $\big(\gamma,u(\gamma) \big) \in \Gamma_{u}$ and $(x,y) \in \mathbb{CP}^{1} \times \mathbb{CP}^{1}$. Since this action is properly discontinuous in the first coordinate of $\Omega\times\mathbb{CP}^{1}$, it follows that $\Gamma_{u}$ acts properly discontinuously and \emph{uniformly} on  $\Omega\times\mathbb{CP}^{1}$ (by the uniformity of the action, we mean that, for every compact set, there is a bound on length of the $\Gamma_{u}$-translates of this compact set that intersect it, and the bound is independent of $u$).

Then, we will develop some of the ideas and techniques of Frances in \cite{Frances} in order to study the dynamics of the compact sets of $\mathrm{Q}_{3}$ for divergent sequences of $\Gamma_{u}$. In particular, we will prove that if $\Gamma_{u}$, defined in (\ref{gamma}), acts properly discontinuously on $\mathrm{SL}(2,\mathbb{C})$, then it acts properly discontinuously on 
$U_{\Gamma}$, defined in (\ref{ugamma}).
Moreover, if $\Gamma\setminus\Omega$ is compact then,  $U_{\Gamma}$ is maximal. This way, Theorem \ref{res1} will be proved. \\

In order to prove Theorem \ref{res2}, we will consider the group $\Gamma$ to be convex-cocompact. Then,  we will  generalize Lema 2.1 of Ghys \cite[p. 119]{GhysD} to prove that $\Gamma_{u}$ acts properly discontinuously and \emph{uniformly} on $\mathrm{SL}(2,\mathbb{C})$, for all $u$ sufficiently close to the constant morphism. So, the hypothesis of Theorem \ref{res1} are valid and then, for all $u$ sufficiently close to the constant morphism, $\Gamma_{u}$ acts properly discontinuously on $U_{\Gamma}$.  

Then, we will continue developing the ideas  and techniques of Frances in order to prove that  $\Gamma_{u}$ acts uniformly on $U_{\Gamma}$. So, there exists an open neighborhood $\mathcal{V}$ of the constant morphism $I$ such that $\Gamma$ acts properly discontinuously on $\mathcal{V} \times U_{\Gamma}$. If $\mathcal{V}$ is a manifold, then this means that 
\begin{eqnarray*}
\Gamma\setminus \big(\mathcal{V} \times U_{\Gamma} \big)  & \to & \mathcal{V},  \\
\ [\nu,x]  & \mapsto & \nu, 
\end{eqnarray*}
where $\nu\in\mathcal{V}, x \in U_{\Gamma}$,  is a locally trivial fibration; this proves the theorem. If $\mathcal{V}$ is not a manifold, we will  consider a resolution of singularities $r:X \to \mathcal{V}$ of a neighborhood  $\mathcal{V}$ of the constant morphism to construct a locally trivial fibration over $X$ whose fibers are the quotients  $\Gamma_{u} \setminus U_{\Gamma}$ and the Theorem will be proved in the general case.

\medskip

In order to prove Theorem~\ref{corart}, we will show that $\Gamma_{u}$ is a subgroup of $\mathrm{PO}(4,\mathbb{C})$ and that there exists a $\big(\mathrm{SL}(2,\mathbb{C}) \times \mathrm{SL}(2,\mathbb{C}) \big)$-equivariant, continuous, proper and open map from $\mathrm{Q}_{3}$ to $\mathbb{CP}^{3}$. We will push forward the set $U_{\Gamma}$ of Theorem \ref{res2} to get the set $V_{\Gamma}$ of Theorem \ref{corart}.

\medskip

Section~\ref{s2} is dedicated to the geometry of $\mathrm{Q}_{3}$. In Section \ref{ss3}, we consider the action of $\Gamma_{u}$ on $\mathrm{Q}_{2}$ and, on $\mathrm{SL}(2,\mathbb{C})$, for the homomorphisms $u:\Gamma\to\mathrm{SL}(2,\mathbb{C})$  sufficiently close to the constant morphism. In Section \ref{s4}, we study the dynamics of the accumulation points for the orbits of compact sets of $\mathrm{Q}_{3}$ for divergent sequences of $\Gamma_{u}$; we prove that if $\Gamma_{u}$ acts properly discontinuously on $\mathrm{SL}(2,\mathbb{C})$, then it acts properly discontinuously on  $U_{\Gamma}$ and if $\Gamma\setminus \Omega$ is compact, then $U_{\Gamma}$ is maximal.  
In Section \ref{s5}, we prove that for $u$ sufficiently close to the constant morphism,  all the quotients  $\Gamma_{u} \setminus U_{\Gamma}$ are compact and diffeomorphic to each other. Finally, we construct the  $\big(\mathrm{SL}(2,\mathrm{C}) \times \mathrm{SL}(2,\mathrm{C}) \big) $-equivariant, continuous, open and proper map from $\mathrm{Q}_{3}$ to $\mathbb{CP}^{3}$ and push forward the complex orthogonal Kleinian group $\Gamma_{u}$ to get a complex Kleinian group. \\

The author would like to thank Adolfo Guillot for all his help and support.


\section{The geometry of the quadric} \label{s2}

In this Section, we study the geometry of the non-degenerate quadric $\mathrm{Q}_{3}$ of dimension three and its group $\mathrm{PO}(5,\mathbb{C})$ of transformations. We will consider the non-degenerate quadric $\mathrm{Q}_{2}$ of dimension two obtained by intersecting $\mathrm{Q}_{3}$ with the projectivization of the hyperplane $z_{3}=0$. We will  recall that the orthogonal  group $\mathrm{O}(4,\mathbb{C})$  is a  subgroup of $\mathrm{PO}(5,\mathbb{C})$ which preserves $\mathrm{Q}_{2}$ and that the group $\mathrm{SO}(4,\mathbb{C})$ of orthogonal  matrices of determinant one is isomorphic to  $\big( \mathrm{SL}(2,\mathbb{C}) \times \mathrm{SL}(2,\mathbb{C}) \big) / \big\{ (I,I), (-I,-I) \big\}$.

We will also define two important kinds of subsets of $\mathrm{Q}_{3}$ and study their geometry; namely, light geodesics and light cones. In Section \ref{s4}, we will see that these sets appear naturally as the sets of accumulation points of the orbits of compact sets of $\mathrm{Q}_{3}$ under discrete subgroups of $\mathrm{SO}(4,\mathbb{C})$.

\subsection{The quadric and its automorphism group} \label{aut}
The reader can consult  Guillot \cite{GuillotD} and M\'endez  \cite{MayraTM, MayraTD} for further discussion of this Section.

\medskip

The non-degenerate quadratic form
\begin{equation} \label{defq} q(z_{1},z_{2},z_{3},z_{4},z_{5}):=z_{1}z_{5}-z_{2}z_{4}-z_{3}^{2}
\end{equation}
 on  $\mathbb{C}^{5}$ defines the non-degenerate quadratic form
 $$q^{*}(z_{1},z_{2},z_{4},z_{5}):=q(z_{1},z_{2},0,z_{4},z_{5})$$
 on  $\mathbb{C}^{4}$. The groups $\mathrm{O}(4,\mathbb{C})$ and  $\mathrm{O}(5,\mathbb{C})$  consist of the matrices which preserve $q^{*}$ and $q$, respectively; the group $\mathrm{SO}(4,\mathbb{C})$ is the subgroup of $\mathrm{O}(4,\mathbb{C})$ which contains the matrices of determinant one. We say that two matrices in $\mathrm{O}(n,\mathbb{C})$  ($n=4,5$) are equivalent if one of them is a nonzero $\mathbb{C}^{*}$-multiple of the other and $\mathrm{PO}(n,\mathbb{C})$  is the set of equivalence classes.

Consider the  quadric
\begin{equation} \label{defc}
C_{4}:= \big\{ (z_{1},z_{2},z_{3},z_{4},z_{5}) \in\mathbb{C}^{5}-\{0\}:q(z_{1},z_{2},z_{3},z_{4},z_{5})=0 \big\}
\end{equation}
in $\mathbb{C}^{5}$  and the non-degenerate quadric 
\begin{equation} \label{defq3}
\mathrm{Q}_{3}:= \big\{ [z_{1}:z_{2}:z_{3}:z_{4}:z_{5}]\in\mathbb{CP}^{4}: \ q(z_{1},z_{2},z_{3},z_{4},z_{5})=0 \big\},
\end{equation}
in $\mathbb{CP}^{4}$. Then, $\mathrm{PO}(5,\mathbb{C})$ is the group of projective transformations which preserves $\mathrm{Q}_{3}$.

Let $\mathrm{H}$ be the hyperplane in $\mathbb{C}^{5}$ given by  $z_{3}=0$; denote by $\pi$ the projection
\begin{eqnarray}
\pi: \mathbb{C}^{5}-\{0\}  & \to & \mathbb{CP}^{4},  \nonumber\\
\big( z_{1},z_{2},z_{3},z_{4},z_{5} \big) & \mapsto & [z_{1}: z_{2}: z_{3}: z_{4}: z_{5}] 
\end{eqnarray}
and let
$$\mathrm{Q}_{2}:=\mathrm{Q}_{3} \cap \pi(\mathrm{H}).$$


Since the composition of this embedding and the projection of $\mathrm{O}(5,\mathbb{C})$ onto $\mathrm{PO}(5,\mathbb{C})$ defines a holomorphic monomorphism $\phi$ from $\mathrm{O}(4,\mathbb{C})$ to  $\mathrm{PO}(5,\mathbb{C})$. We also use the notation $\mathrm{O}(4,\mathbb{C})$ for its image; similarly, we write  $\mathrm{SO}(4,\mathbb{C})$ for the image of this group in $\mathrm{PO}(5,\mathbb{C})$.%
\\

The group $\mathrm{O}(4,\mathbb{C})$ is isomorphic to the subgroup of $\mathrm{PO}(5,\mathbb{C})$ that preserves the projection $\pi(\mathrm{H})$ of the hyperplane $\mathrm{H}$ and the projection $\pi(e_{3})$ of the vector $e_{3}$. \\

Recall the embedding (\ref{enc}) considered in the Introduction; as the  group $\mathrm{O}(4,\mathbb{C})$ preserves $\mathrm{Q}_{2}$, therefore,  
\begin{equation} \label{theta}
\Theta:=\mathrm{Q}_{3}-\mathrm{Q}_{2}
\end{equation}
is biholomorphic to $\mathrm{SL}(2,\mathbb{C})$ and
the action of
 $\mathrm{SL}(2,\mathbb{C}) \times  \mathrm{SL}(2,\mathbb{C})$ on $\mathrm{SL}(2,\mathbb{C})$ given by
 \begin{equation} \label{acsl}
 \big( (f,g), x \big) \mapsto fxg^{-1},
 \end{equation}
 where $f,g,x \in \mathrm{SL}(2,\mathbb{C})$,
 defines a holomorphic action of $\mathrm{SL}(2,\mathbb{C})\times  \mathrm{SL}(2,\mathbb{C})$ on $\Theta$. This action
 extends in a unique way to a (non-faithful) action on $\mathrm{Q}_{3}$, so it defines an holomorphic homomorphism $\psi$ from $\mathrm{SL}(2,\mathbb{C})\times \mathrm{SL}(2,\mathbb{C})$ to $\mathrm{PO}(5,\mathbb{C})$ whose image is contained in $\mathrm{SO}(4,\mathbb{C})$ and whose kernel is $\big\{ (I,I), (-I,-I)\big\}$. As the image of this homomorphism is a connected subgroup of the connected group $\mathrm{SO}(4,\mathbb{C})$  (see \cite[p. 82]{Goodman}) and both of them are of the same dimension, this homomorphism is surjective. Therefore, $\psi$ induces a biholomorphic isomorphism from $\big( \mathrm{SL}(2,\mathbb{C})\times  \mathrm{SL}(2,\mathbb{C}) \big) / \big\{ (I,I), (-I,-I) \big\}$  to $\mathrm{SO}(4, \mathbb{C})$. \\

The quadric $\mathrm{Q}_{2}$ is biholomorphic to $\mathbb{CP}^{1} \times \mathbb{CP}^{1}$. The function
\begin{eqnarray} \label{de2}
\mathrm{Q}_{2} & \to & \mathbb{CP}^{1}\times\mathbb{CP}^{1}, \nonumber\\
\ [z_{1}:z_{2}:0:z_{4}:z_{5}] & \mapsto & \Bigg( \Bigg[ Im \left( \begin{array}{cc}
            z_{1} & z_{2} \\
            z_{4} & z_{5}
           \end{array}
    \right) \Bigg], \Bigg[ Ker \left( \begin{array}{cc}
            z_{1} & z_{2} \\
            z_{4} & z_{5}
           \end{array}
    \right) \Bigg] \Bigg),
\end{eqnarray}
is a biholomorphism which is $\mathrm{SO}(4,\mathbb{C})$-equivariant with respect to the restriction of the action of $\mathrm{SO}(4,\mathbb{C})$ on $\mathrm{Q}_{3}$ and the action
\begin{eqnarray} \label{defacxc}
\mathrm{SO}(4,\mathbb{C}) \times \big( \mathbb{CP}^{1} \times \mathbb{CP}^{1} \big) &
\to &   \mathbb{CP}^{1} \times \mathbb{CP}^{1}, \nonumber\\
 \Big(\big[ (g,h) \big], (x,y) \Big) &\mapsto &\big( g(x),h(y) \big).
 \end{eqnarray}

\subsection{Light geodesics and light cones}
The results of this Section are analogous to those of the real case given by Frances in \cite{Frances}.

Consider the bilinear form  $b$ associated to the non-degenerate quadratic form $q$ defined in (\ref{defq}).
For each subspace $W$ of $\mathbb{C}^{5}$,
 $W^{\perp}$ is the set of vectors $v \in \mathbb{C}^{5}$ such that $b(v,w)=0$ for all $w \in W$.
 A vector subspace  $W$ of $\mathbb{C}^{5}$ is called  \emph{isotropic} if  $q(w)=0$ for all $w \in W$.
There exist isotropic $\mathbb{C}$-planes, for example $\langle e_{1}, e_{2} \rangle$, where $e_{1}, \dots, e_{5}$ is the canonical base of $\mathbb{C}^{5}$.

For every subspace $W$ of $\mathbb{C}^{5}$, $\mathrm{dim}(W)+\mathrm{dim}(W^{\perp}) = \mathrm{dim}(\mathbb{C}^{5})$.
If $W$ is isotropic, by the polarization identity, we know that $W \subset W^{\perp}$; thus, there are no isotropic subspaces of $\mathbb{C}^{5}$ of dimension three or four.

The projectivization of an isotropic $\mathbb{C}$-plane is called a \emph{light geodesic}. The group $\mathrm{PO}(5,\mathbb{C})$ sends light geodesics to light geodesics.

If $p \in \mathrm{Q}_{3}$, the union of all the light geodesics which contain $p$ is called the \emph{light cone} of $p$  and is denoted by $C(p)$. We have that if $\widetilde{p}$ is any point in $\mathbb{C}^{5}$ such that $\pi(\widetilde{p})=p$, then
$$C(p)=\pi( \widetilde{p}^{\perp} \cap C^{4}),
$$
where $C_{4}$ was defined in (\ref{defc}).

Let us consider the following equivalence relation in $C(p)-\{p\}$: we say that $x,y \in C(p)-\{p\}$ are \emph{equivalent} if they belong to the same light geodesic which contain $p$.
Let us denote by $\widetilde{C}(p)$ the space of all light geodesics which contain $p$, that is,  the space of all equivalence classes of $C(p)-\{p\}$. The group $\mathrm{PO}(5,\mathbb{C})$ sends light cones to light cones.

\begin{prop} \label{lg}
The space of all light geodesics of $\mathrm{Q}_{3}$ which pass through a given point is a $\mathbb{CP}^{1}$. Those geodesics which are contained in  $\mathrm{Q}_{2}$ are of the form $\{z\} \times\mathbb{CP}^{1}$ and $\mathbb{CP}^{1} \times \{w\}$, where $z,w \in\mathbb{CP}^{1}$.
\end{prop}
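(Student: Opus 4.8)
The plan is to reduce both assertions to linear algebra about the form $q$ and then to recognise the resulting families of isotropic subspaces. For the first assertion, fix $p\in\mathrm{Q}_3$ and a representative $\widetilde p\in C_4$, so that $q(\widetilde p)=0$. A light geodesic through $p$ is the projectivization of an isotropic $\mathbb{C}$-plane $W$ with $\widetilde p\in W$; since $b$ vanishes on $W\times W$, every $w\in W$ satisfies $b(\widetilde p,w)=0$, so $W\subseteq\widetilde p^{\perp}$. Now $\dim\widetilde p^{\perp}=4$, and because $q(\widetilde p)=0$ the line $\langle\widetilde p\rangle$ is exactly the radical of $b|_{\widetilde p^{\perp}}$ (indeed $(\widetilde p^{\perp})^{\perp}=\langle\widetilde p\rangle$ by nondegeneracy). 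I would therefore pass to the three-dimensional quotient $\overline V:=\widetilde p^{\perp}/\langle\widetilde p\rangle$, endowed with the induced nondegenerate form $\overline b$.

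The assignment $W\mapsto W/\langle\widetilde p\rangle$ then identifies the isotropic planes $W$ with $\widetilde p\in W\subseteq\widetilde p^{\perp}$ with the $\overline b$-isotropic lines of $\overline V$: writing such a plane as $\langle\widetilde p,w\rangle$, the identity $q(a\widetilde p+cw)=c^{2}q(w)$ shows that $W$ is isotropic if and only if $\langle\overline w\rangle$ is $\overline b$-isotropic. Hence $\widetilde C(p)$ is the set of isotropic lines of a three-dimensional nondegenerate quadratic space, i.e. the projective conic $\{\overline q=0\}\subset\mathbb{P}(\overline V)\cong\mathbb{CP}^{2}$. This conic is smooth, since $\overline b$ is nondegenerate, and a smooth plane conic is biholomorphic to $\mathbb{CP}^{1}$; this proves the first assertion.

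For the second assertion, a light geodesic $\pi(W)$ lies in $\mathrm{Q}_2=\mathrm{Q}_3\cap\pi(\mathrm{H})$ precisely when $W\subseteq\mathrm{H}=\{z_3=0\}$, that is, when $\pi(W)$ is a projective line contained in the smooth quadric surface $\mathrm{Q}_2$. I would analyse these lines through the biholomorphism (\ref{de2}): writing a rank-one matrix as $M=u\,w^{\mathsf T}$ gives $[\mathrm{Im}\,M]=[u]$ while $[\mathrm{Ker}\,M]$ is determined by $[w]$. Holding $[u]$ fixed and letting $w$ vary, the corresponding subset of $\mathrm{Q}_2$ is the projectivization of the plane $\langle(u_1,0,0,u_2,0),(0,u_1,0,0,u_2)\rangle$, which one checks directly to be isotropic and contained in $\mathrm{H}$; thus $\{[u]\}\times\mathbb{CP}^{1}$ is a light geodesic, and symmetrically so is every $\mathbb{CP}^{1}\times\{w\}$.

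The linear-algebra reduction in the first two paragraphs is routine; the step I expect to be the main obstacle is the converse in the second assertion, namely that $\mathrm{Q}_2$ carries no lines other than the two rulings. I would obtain this either from the classical description of the lines on a smooth quadric surface, or, self-containedly, from the fact that a pencil of $2\times2$ matrices all of rank at most one must have either constant image or constant kernel; combined with the explicit computation above, this shows that the light geodesics contained in $\mathrm{Q}_2$ are exactly the families $\{z\}\times\mathbb{CP}^{1}$ and $\mathbb{CP}^{1}\times\{w\}$.
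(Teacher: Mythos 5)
Your proof is correct, but it takes a genuinely different route from the paper's, most visibly in the second assertion. For the first assertion the paper works in coordinates at the single point $\pi(e_{1})$: it exhibits $C\big(\pi(e_{1})\big)=\{[z_{1}:z_{2}:z_{3}:z_{4}:0]:z_{2}z_{4}+z_{3}^{2}=0\}$ and fibers $C\big(\pi(e_{1})\big)-\{\pi(e_{1})\}$ over the conic $\mathrm{Q}_{1}\cong\mathbb{CP}^{1}$, the fibers being the punctured geodesics, and then transports the conclusion to an arbitrary point by an element $g\in\mathrm{PO}(5,\mathbb{C})$; your quotient $\widetilde p^{\perp}/\langle\widetilde p\rangle$ is the invariant form of that same computation (the paper's $\mathrm{Q}_{1}$ is exactly your conic in coordinates at $\pi(e_{1})$), with the advantage of treating all points uniformly and needing no homogeneity. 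For the second assertion the paper again computes only at $\pi(e_{1})$, finding that $C\big(\pi(e_{1})\big)\cap\pi(\mathrm{H})$ consists of the two planes $\pi\big(\langle e_{1},e_{2}\rangle\big)$ and $\pi\big(\langle e_{1},e_{4}\rangle\big)$, one vertical and one horizontal line under (\ref{de2}), and obtains the converse dynamically: transitivity of $\mathrm{SO}(4,\mathbb{C})$ on $\mathbb{CP}^{1}\times\mathbb{CP}^{1}$, together with the fact that the group permutes light geodesics and preserves the two foliations, forces every line in $\mathrm{Q}_{2}$ to be a leaf. You instead settle the converse by pure linear algebra, and your pencil lemma is correct: writing $A=uw^{\mathsf T}$, $B=u'w'^{\mathsf T}$ with $[u]\neq[u']$ and $[w]\neq[w']$, the matrix $A+B$ annihilates no nonzero vector and so has rank two, whence a two-dimensional space of rank-$\le 1$ matrices has constant image or constant kernel, and the explicit plane $\langle(u_{1},0,0,u_{2},0),(0,u_{1},0,0,u_{2})\rangle$ then pins down the rulings. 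Your route is thus self-contained and does not presuppose transitivity of the action or the classical theory of rulings; what the paper's route buys in exchange is machinery it reuses later, namely the induced biholomorphisms $\bar g:\widetilde C(p)\to\widetilde C\big(g(p)\big)$ and the identification of the rulings as the $\mathrm{SO}(4,\mathbb{C})$-invariant horizontal and vertical foliations, which underlie the notion of parallel geodesics exploited in Section \ref{s4}.
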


\begin{proof}

Recall that the non-degenerate quadric
$$\mathrm{Q}_{1}:=\big\{[0:z_{2}:z_{3}:z_{4}:0] \in\mathbb{CP}^{2}: z_{2}z_{4}+z_{3}^{2}=0 \big\}
$$is biholomorphic to $\mathbb{CP}^{1}$.

We have that
$$C \big( \pi(e_{1}) \big)=\pi(e_{1}^{\perp} \cap C^{4})= \big\{ [z_{1}:z_{2}:z_{3}:z_{4}:0] \in\mathbb{CP}^{4}:z_{2}z_{4}+z_{3}^{2}=0 \big\}.$$

The preimages of the map
\begin{eqnarray*}
p:C  \big(\pi(e_{1}) \big)-\big\{\pi(e_{1}) \big\}= \underset{[z_{2}:z_{3}:z_{4}] \in \mathrm{Q}_{1}}{\bigcup} \pi \big( \mathbb{C} \times \{z_{2} \} \times \{z_{3}\} \times \{z_{4} \} \times \{0\} \big) & \to & \mathrm{Q}_{1}, \\
\ [z_{1}:z_{2}:z_{3}:z_{4}:0] & \mapsto & [0:z_{2}:z_{3}:z_{4}:0]
\end{eqnarray*}
are the light geodesics which contain $\pi(e_{1})$, without the point $\pi(e_{1})$.
Therefore, the space of light geodesics which contains $\pi(e_{1})$ is a complex manifold biholomorphic to $\mathbb{CP}^{1}$.

In a similar fashion,  for all $g \in \mathrm{PO}(5,\mathbb{C})$, the map
 \begin{eqnarray*}
p_{g}:  C \Big(\pi \big(g (e_{1}) \big)  \bigg)-\big\{\pi(g(e_{1})) \big\}= \underset{[z_{2}:z_{3}:z_{4}] \in \mathrm{Q}_{1}}{\bigcup} g \Big( \pi \big( \mathbb{C} \times \{z_{2} \} \times \{z_{3}\} \times \{z_{4} \} \times \{0\} \big) \Big) \to g ( \mathrm{Q}_{1} ), && \\
\ g \big( [z_{1}:z_{2}:z_{3}:z_{4}:0] \big)  \mapsto  g \big( [0:z_{2}:z_{3}:z_{4}:0] \big). &&
\end{eqnarray*}
gives the space of light geodesics which contain $\pi \big(g (e_{1}) \big)$ a structure of a complex manifold biholomorphic to $\mathbb{CP}^{1}$. Also, $g$ induces a biholomorphism $\bar{g}$ from $\widetilde{C} \big( \pi(e_{1}) \big)$ to $\widetilde{C} \Big( \pi \big( g (e_{1}) \big) \Big)$. \\

Recall that we defined $\mathrm{H}$ to be the hyperplane $z_{3}=0$. The light geodesics contained in $\mathrm{Q}_{2}$ are
$$C \big( \pi(e_{1}) \big) \cap \pi(\mathrm{H})= \big\{ [z_{1}:z_{2}:0:z_{4}:0] \in\mathbb{CP}^{3}:z_{2}z_{4}=0 \big\}.
$$

Let us define the map
\begin{eqnarray*}
q: \Big( C \big( \pi(e_{1}) \big) \cap \pi(\mathrm{H}) \Big)- \big\{ \pi(e_{1}) \big\} & \to &  \big\{ [0:1], [1:0] \big\}, \\
\ [z_{1}:z_{2}:z_{4}:0] & \mapsto & [z_{2}:z_{4}].
\end{eqnarray*}

Then, $$q^{-1} \big( [0:1] \big) \cup \big\{ \pi(e_{1}) \big\}= \pi \big(\langle e_{1}, e_{4} \rangle \big)$$ and $$q^{-1} \big( [1:0] \big) \cup \big\{ \pi(e_{1}) \big\}= \pi \big( \langle e_{1}, e_{2} \rangle \big)$$ are the light geodesics which contain $\pi(e_{1})$.
Under the biholomorphism (\ref{de2}), these light geodesics are $\big\{ [1:0] \big\} \times\mathbb{CP}^{1}$ and $\mathbb{CP}^{1} \times \big\{ [0:1] \big\}$. \\

There are two natural foliations on $\mathbb{CP}^{1} \times\mathbb{CP}^{1}$, namely the vertical and the horizontal foliations, whose leaves are the sets of the form $\{z\} \times\mathbb{CP}^{1}, \ z \in\mathbb{CP}^{1}$ and $\mathbb{CP}^{1} \times \{w\}, \ w \in \mathbb{CP}^{1}$, respectively. As the action of $\mathrm{SO}(4,\mathbb{C})$ is transitive on $\mathbb{CP}^{1} \times\mathbb{CP}^{1}$ and  preserves the space of light geodesics, then, the leaves of these foliations are all the light geodesics contained in $\mathbb{CP}^{1} \times \mathbb{CP}^{1}$. We will call these leaves \emph{vertical} and \emph{horizontal} light geodesics, respectively. If two light geodesics are both horizontal (or vertical), then we say that they are \emph{parallel}.
\end{proof}


\section{Some deformations of (classical) Kleinian groups} \label{ss3}
In the last Section, we saw that there is an holomorphic epimorphism $\psi: \mathrm{SL}(2,\mathbb{C})  \times \mathrm{SL}(2,\mathbb{C})  \to \mathrm{SO}(4,\mathbb{C})$. 
In this Section, we will recall that 
 (classical) Kleinian groups of $\mathrm{SL}(2,\mathbb{C}) \cong \mathrm{SL}(2,\mathbb{C}) \times \{ I \} $ inject in $\mathrm{SO}(4,\mathbb{C})$, via this epimorphism. We will also recall a result of Guillot in  \cite[p.\ 224, 225]{GuillotD} which says that these groups are in fact, orthogonal Kleinian groups. We will also study some deformations of them inside $\mathrm{SO}(4,\mathbb{C})$ and their geometry in the quadric $\mathrm{Q}_{3}$ and in $\Theta=\mathrm{Q}_{3}-\mathrm{Q}_{2}$. \\

Consider the projection $\mathcal{P}: \mathrm{SL}(2,\mathbb{C}) \to  \mathrm{PSL}(2,\mathbb{C}), A \mapsto [A], A \in \mathrm{SL}(2,\mathbb{C})$.
 A \emph{lift of an element} $g$ in $\mathrm{PSL}(2,\mathbb{C})$ to $\mathrm{SL}(2,\mathbb{C})$ is  an element $A$ of $\mathrm{SL}(2,\mathbb{C})$ such that $\mathcal{P}(A)=g$.
 A \emph{lift of a subgroup} $\widetilde{\Gamma}$ of $\mathrm{PSL}(2,\mathbb{C})$ to $\mathrm{SL}(2,\mathbb{C})$ is a monomorphism $j:\widetilde{\Gamma} \to \mathrm{SL}(2,\mathbb{C})$ such that $\mathcal{P} \circ j$ is the identity.
A \emph{(classical) Kleinian group of $\mathrm{SL}(2,\mathbb{C})$} is the image of a lift of a (classical) Kleinian group of $\mathrm{PSL}(2,\mathbb{C})$ to $\mathrm{SL}(2,\mathbb{C})$. \\

 Consider a torsion-free (classical) Kleinian group $\Gamma$ of $\mathrm{SL}(2,\mathbb{C})$ (I. Kra in \cite{Kra} proved that they exist). It follows that $\Gamma$ acts on $\mathbb{CP}^{1}$
 \begin{eqnarray*}
   \Gamma \times \mathbb{CP}^{1} & \to & \mathbb{CP}^{1} \\
   (\gamma,z) & \to & \mathcal{P}(\gamma)(z).
 \end{eqnarray*}
 
Also, if $\Gamma$ is a lift of the (classical) Kleinian group $\widetilde{\Gamma} \subset \mathrm{PSL}(2,\mathbb{C})$ and $\Omega$ is the domain of discontinuity of $\widetilde{\Gamma}$ in $\mathbb{CP}^{1}$, then $\Gamma$ acts properly discontinuously on  $\Omega$ and all the properties of $\widetilde{\Gamma}$ are still valid for $\Gamma$.

 By definition $\mathcal{P}(-I)=Id$, where $I$ is the identity matrix and $Id$ the identity  M\"{o}bius transformation. So, if there exists an element $a \in \widetilde{\Gamma}$ such that $j(a)=-I$, as $\mathcal{P} \circ j$ is the identity, then $a=Id$; however, this is a contradiction, since $j$ is an isomorphism and, hence, $j(Id)=I$. Then $-I \notin\Gamma$. By similar arguments it follows that, if $A \in \Gamma$, then $-A \notin \Gamma$.

The composition of the inclusion of $\mathrm{SL}(2,\mathbb{C})$ into $\mathrm{SL}(2,\mathbb{C}) \times \mathrm{SL}(2,\mathbb{C})$, defined by $g \mapsto (g,I)$,
 and $\psi$, defined
in the Subsection \ref{aut}, defines a holomorphic monomorphism from $\mathrm{SL}(2,\mathbb{C})$ to $\mathrm{PO}(5,\mathbb{C})$; hence, $\mathrm{SL}(2,\mathbb{C})$ can be considered as a subgroup of $\mathrm{PO}(5,\mathbb{C})$. \\

Consider a   (classical) Kleinian group $\Gamma\subset \mathrm{SL}(2,\mathbb{C})$  and a group morphism $u: \Gamma\to \mathrm{SL}(2,\mathbb{C})$. Recall the definition of the group $\Gamma_{u}$, given in (\ref{gamma}); the kernel of the homomorphism $\psi$, given in Section \ref{aut}, and that $-I \notin \Gamma$. Then $\psi$, restricted to $\Gamma_{u}$, is also a monomorphism, and so, $\Gamma_{u}$ can be considered as a subgroup of $\mathrm{PO}(4,\mathbb{C})$. That is, the group $\Gamma_{u}$ is a deformation of the (classical) Kleinian group $\Gamma$ inside $\mathrm{SO}(4,\mathbb{C})$.
In fact, $\Gamma_{u}$ is torsion free.

One of the first examples of orthogonal Kleinian groups of dimension three was given by A. Guillot in \cite[pp. 224, 225]{GuillotD}. Guillot proved that, if $\Gamma$ is a torsion free, (classical) Kleinian group of $\mathrm{SL}(2,\mathbb{C})$, then $\Gamma$ is a complex orthogonal Kleinian group of dimension three (by means of the homomorphism $\psi$, defined in Section \ref{aut}). In particular, if $\Omega$ denotes the domain of discontinuity of $\Gamma$ in $\mathbb{CP}^{1}$, then $\Gamma$ acts  properly discontinuously on $\Theta \cup \big( \Omega\times\mathbb{CP}^{1} \big)$ (see Section \ref{s2}). Guillot also proved that if, in addition, $\Gamma$ is a convex-cocompact classical Kleinian group of $\mathrm{SL}(2,\mathbb{C})$, then $\Gamma$ acts cocompactly on $\Theta \cup \big( \Omega\times\mathbb{CP}^{1} \big)$. As we discussed in the Introduction of this paper, we call this quotient the Guillot manifold and the Guillot structure the complex orthogonal structure determined by it. \\

Recall that a (classical) Kleinian group $\Gamma$ with domain of discontinuity $\Omega$ in $\mathbb{CP}^{1}$ is \emph{convex-cocompact} if $\Gamma\setminus (\mathbb{H}^{3} \cup \Omega)$ is compact.  Classical Fuchsian groups which define compact surfaces of genus $g$  for $g \ge 1$ and (classical) Schottky groups are examples of convex-cocompact groups. A good reference for convex-cocompact Kleinian groups is \cite{Benoist}.

\begin{prop} \label{p1}
Let $\Gamma \subset \mathrm{SL}(2,\mathbb{C})$ be a torsion-free, convex-cocompact, (classical) Kleinian group with domain of discontinuity $\Omega$ in $\mathbb{CP}^{1}$. If  $u:\Gamma\to \mathrm{SL}(2,\mathbb{C})$ is a group morphism sufficiently close to the constant morphism, then, $\Gamma_{u}$, which was defined in (\ref{gamma}), acts properly discontinuously and uniformly on $\mathrm{SL}(2,\mathbb{C})$.

\end{prop}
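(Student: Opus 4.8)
The plan is to compare the two-sided action of $\Gamma_{u}$ on $\mathrm{SL}(2,\mathbb{C})$ with the isometric action of $\Gamma$ on hyperbolic $3$-space $\mathbb{H}^{3}=\mathrm{SL}(2,\mathbb{C})/\mathrm{SU}(2)$, using the Cartan (singular value) projection as a proper substitute for a distance. Write $\mu:\mathrm{SL}(2,\mathbb{C})\to[0,\infty)$ for the function that assigns to $g=k_{1}ak_{2}$ (polar decomposition, $k_{i}\in\mathrm{SU}(2)$, $a=\mathrm{diag}(e^{t},e^{-t})$, $t\ge 0$) the value $\mu(g)=t=\log\norm{g}$. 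The first facts I would record are that $\mu$ is \emph{proper} (its sublevel sets $\set{\mu\le R}$ are compact and exhaust the compact subsets of $\mathrm{SL}(2,\mathbb{C})$) and that, by submultiplicativity of the operator norm, it satisfies $\mu(g^{-1})=\mu(g)$ and the estimate
\[
\abs{\mu(agb)-\mu(g)}\le \mu(a)+\mu(b)\qquad\text{for all }a,g,b,
\]
in particular $\mu(gh)\le\mu(g)+\mu(h)$. These are standard properties of the Cartan projection. I would also recall that, under $\mathrm{SL}(2,\mathbb{C})\to\mathrm{Isom}^{+}(\mathbb{H}^{3})$, one has $d_{\mathbb{H}^{3}}(o,g\cdot o)=2\mu(g)$ for the basepoint $o$ fixed by $\mathrm{SU}(2)$.

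Next I would convert the intersection condition into an inequality. Fix a compact set $K\subset\mathrm{SL}(2,\mathbb{C})$ and put $R:=\max_{x\in K}\mu(x)$. If $\gamma\in\Gamma$ is such that $(\gamma,u(\gamma))K\cap K\ne\emptyset$, there are $x,y\in K$ with $\gamma\,x\,u(\gamma)^{-1}=y$, hence $\gamma=y\,u(\gamma)\,x^{-1}$. The estimate above then gives
\[
\mu(\gamma)\le \mu(u(\gamma))+\mu(y)+\mu(x^{-1})\le \mu(u(\gamma))+2R .
\]
Thus everything reduces to bounding $\mu(\gamma)$ from below and $\mu(u(\gamma))$ from above in terms of the word length $\abs{\gamma}$ of $\gamma$ with respect to a fixed finite generating set $S$ of $\Gamma$ (finite, since convex-cocompact groups are finitely generated). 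For the deformed side, writing $\gamma$ as a geodesic word $s_{1}\cdots s_{n}$ with $n=\abs{\gamma}$ and using submultiplicativity together with the fact that $u$ is a homomorphism, I get $\mu(u(\gamma))\le\sum_{i}\mu(u(s_{i}))\le M_{u}\,\abs{\gamma}$, where $M_{u}:=\max_{s\in S}\mu(u(s))$. Because $u$ is close to the constant (trivial) morphism, each $u(s)$ is close to $I$, and $\mu$ is continuous with $\mu(I)=0$; since $S$ is finite, $M_{u}\to 0$ as $u\to I$.

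For the undeformed side I would invoke the hypothesis that $\Gamma$ is \emph{convex-cocompact}: the orbit map $\gamma\mapsto\gamma\cdot o\in\mathbb{H}^{3}$ is then a quasi-isometric embedding (Milnor--\v{S}varc applied to the cocompact action on the convex core; see \cite{Benoist}), which via $d_{\mathbb{H}^{3}}(o,\gamma\cdot o)=2\mu(\gamma)$ yields constants $c>0$ and $C\ge 0$, depending only on $\Gamma$, with $\mu(\gamma)\ge c\abs{\gamma}-C$ for all $\gamma\in\Gamma$. Combining the three inequalities, any $\gamma$ whose translate meets $K$ satisfies $c\abs{\gamma}-C\le M_{u}\abs{\gamma}+2R$, i.e.
\[
(c-M_{u})\,\abs{\gamma}\le 2R+C .
\]
I would then shrink the neighbourhood of the trivial morphism so that $M_{u}\le c/2$ throughout it; for such $u$ this forces $\abs{\gamma}\le 2(2R+C)/c=:N$, a bound depending only on $K$ and $\Gamma$ and \emph{not} on $u$. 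Finite generation of $\Gamma$ makes $\set{\gamma:\abs{\gamma}\le N}$ finite, so only finitely many $\Gamma_{u}$-translates of $K$ meet $K$ (proper discontinuity), and the $u$-independence of $N$ is exactly the uniformity in the sense of the Introduction. This argument generalizes Lema~2.1 of \cite{GhysD}.

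The main obstacle is the lower bound $\mu(\gamma)\ge c\abs{\gamma}-C$: this is precisely where convex-cocompactness is indispensable (it fails for groups with parabolics, where the orbit map is distorted), and where one must carefully pass between the word metric on $\Gamma$, the displacement in $\mathbb{H}^{3}$, and the Cartan projection on $\mathrm{SL}(2,\mathbb{C})$. Once that quasi-isometric comparison is in hand, the remainder is a uniform and purely formal application of the submultiplicativity of $\mu$.
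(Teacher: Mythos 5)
Your proof is correct, and its skeleton is the same as the paper's: both are modifications of Ghys' lemma in which the intersection condition $(\gamma,u(\gamma))K\cap K\neq\emptyset$ is turned into an estimate of the form $\mathrm{size}(\gamma)\le\mathrm{size}(u(\gamma))+O(1)$, the deformed factor is bounded above linearly in word length with a constant that tends to $0$ as $u$ tends to the constant morphism, and convex-cocompactness enters exactly once, via the \v{S}varc--Milnor lemma applied to the compact convex core $C(M)=\Gamma\setminus\big(C(\Gamma)\cap\mathbb{H}^{3}\big)$, to produce the linear lower bound in word length. Where you genuinely diverge is in the choice of ``size''. The paper juggles three quantities, as Ghys does --- the word length $l(\gamma)$, the distance $d(I,\cdot)$ for a specially constructed left-invariant Riemannian metric on $\mathrm{SL}(2,\mathbb{C})$ adapted to the fibration $\rho:\mathrm{SL}(2,\mathbb{C})\to\mathbb{H}^{3}$, $g\mapsto g(i)$ (whose quasi-isometry property is established ``after some computation''), and a Euclidean norm $\norm{\cdot}$ compared logarithmically with $d(I,\cdot)$. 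You collapse all of this into the single Cartan projection $\mu(g)=\log\norm{g}$, for which the identity $d_{\mathbb{H}^{3}}(o,g\cdot o)=2\mu(g)$ and the exact subadditivity $\abs{\mu(agb)-\mu(g)}\le\mu(a)+\mu(b)$ replace both the construction of the adapted metric and the logarithmic norm comparison; in exchange your endgame is fully explicit --- the inequality $(c-M_{u})\abs{\gamma}\le 2R+C$ with the neighbourhood shrunk so that $M_{u}\le c/2$ gives the $u$-independent word-length bound directly --- whereas the paper closes with ``we can now follow the same arguments of E.\ Ghys.'' One small point you should make explicit: the identity $d_{\mathbb{H}^{3}}(o,g\cdot o)=2\mu(g)$ holds for the basepoint $o$ fixed by $\mathrm{SU}(2)$, which need not lie in the convex hull of the limit set; since changing the basepoint alters $d(o,\gamma\cdot o)$ only by an additive constant, the lower bound $\mu(\gamma)\ge c\abs{\gamma}-C$ survives with a modified $C$ --- this is the same normalization the paper handles with its ``without loss of generality, $(0,0,1)\in C(M)$.''
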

\noindent\textbf{Remark 1:} 
In Theorem 1.3 of \cite{Fanny}, Kassel  proved that $\Gamma_{u}$ acts properly discontinuously on $\mathrm{SL}(2,\mathbb{C})$.
\noindent\textbf{Remark 2:} 
The proof of this Proposition is based on a modification of Lemma 1.2 of Ghys in \cite{GhysD}.

\begin{proof} 
We will show that there exists an open neighborhood $\mathcal{V}$ of the constant morphism such that
\begin{eqnarray} \label{f1}
  \Gamma \times \Big( \mathcal{V} \times \mathrm{SL}(2,\mathbb{C}) \Big) & \to & \mathcal{V} \times \mathrm{SL}(2,\mathbb{C}), \nonumber\\
  \Big( \gamma,\big( u,y \big) \Big) & \mapsto &  \Big( u, \big( \gamma,  u(\gamma) \big) y \Big)
\end{eqnarray}
is properly discontinuous.

In Lemma 1.2 of \cite{GhysD}, Ghys  showed the same statement but for discrete and cocompact groups, that is, he proved that if $\Gamma$ is discrete and cocompact, then, $\Gamma_{u}$ acts properly discontinuously and uniformly on $\mathrm{SL}(2,\mathbb{C})$.

In his proof, Ghys considered three different topologies on $\Gamma$: The word metric (we denote by $l(\gamma)$  the lenght of $\gamma$), the restriction of the metric induced by any right-invariant Riemmannian metric $d$ on $\mathrm{SL}(2,\mathbb{C})$ and the restriction of any Euclidean norm $||\cdot||$ on $\mathbb{C}^{4}$. Then, he used the fact that $\Gamma \setminus \mathrm{SL}(2,\mathbb{C})$ is compact to apply  the \v{S}varc-Milnor Lemma and concluded that   $l(\cdot)$  is bounded by a function which depends  linearly on $d(I,\cdot)$. On the other hand, it also happens that, in $\mathrm{SL}(2,\mathbb{C})$, $d(I,\cdot)$ is always bounded by a function which depends logarithmically on $||\cdot||$. Therefore, $l(\cdot)$ is bounded by a function which depends   logarithmically on $||\cdot||$.

Ghys considered a compact set $K \subset \mathrm{SL}(2,\mathbb{C})$, an element $\gamma$ in $\Gamma$ and a group morphism $u: \Gamma \to \mathrm{SL}(2,\mathbb{C})$, such that the $\big(\gamma,u(\gamma) \big)$-translate of $K$ intersects $K$. By routine analysis, it follows that $||\gamma||$ is bounded by a constant which depends exponentially on $l(\gamma)$.
By this and by the last paragraph, $l(\gamma)$ is bounded by a constant which depends linearly on $l(\gamma)$. It happens that, if the group morphism $u$ is sufficiently close to the constant morphism, then,  we can get a constant  upper bound on $l(\gamma)$ and this constant  does not depend on the homomorphism $u$. So, if $u$ is sufficiently close to the constant morphism, only a finite number of $\Gamma_{u}$-translates of $K$  intersect $K$ and the set of these translates does not depend on $u$.  \\

We will present a modification of Ghys' Lemma. We will consider a convex-cocompact (classical) Kleinian group $\Gamma$ of $\mathrm{SL}(2,\mathbb{C})$ and first prove that 
 $\mathrm{SL}(2,\mathbb{C})$ and $\mathbb{H}^{3}$ are quasi-isometric with respect to any left-invariant Riemannian metric in $\mathrm{SL}(2,\mathbb{C})$ and any word metric in $\Gamma$. Then, we will use that $\Gamma$ is convex-cocompact; in particular, there exists a $\Gamma$-invariant convex set of $\mathbb{H}^{3}$, such that, its orbit space is compact, in order to apply the \v{S}varc-Milnor Lemma and prove that a similar upper bound of $l(\cdot)$ (as a linear function of $d(I,\cdot)$) is valid. \\

We will first construct a specific left-invariant Riemannian metric in $\mathrm{SL}(2,\mathbb{C})$ and prove that $\mathrm{SL}(2,\mathbb{C})$ and $\mathbb{H}^{3}$ are quasi-isometric with respect to this specific left-invariant Riemannian metric in $\mathrm{SL}(2,\mathbb{C})$ and any word metric in $\Gamma$. As any two  left-invariant Riemannian metric in $\mathrm{SL}(2,\mathbb{C})$ are quasi-isometric, this will imply that  $\mathrm{SL}(2,\mathbb{C})$ and $\mathbb{H}^{3}$ are quasi-isometric with respect to any left-invariant Riemannian metric in $\mathrm{SL}(2,\mathbb{C})$ and any word metric in $\Gamma$. \\

It can be proved that
\begin{eqnarray} \label{defrho}
\rho:\mathrm{SL}(2,\mathbb{C}) & \to & \mathbb{H}^{3}, \nonumber\\
g & \mapsto & g(i)
\end{eqnarray}
is a trivial fibration with fiber $\mathrm{SU}(2)$ (for further discussion see \cite[pp. 37-38]{MayraTD}) and, hence, $T_{I}\mathrm{SL}(2,\mathbb{C})=T_{I}\mathbb{H}^{3} \oplus T_{I}\mathrm{SU}(2)$. Let us denote by $g^{1}$,  the hyperbolic metric of $\mathbb{H}^{3}$ and, by $g_{I}^{1}$, the inner product in $T_{I}\mathbb{H}^{3}$ determined by $g^{1}$ at the identity. Consider an arbitrary inner product $g_{I}^{2}$ in $T_{I}\mathrm{SU}(2)$. Then 
$$g_{I}: T_{I}\mathrm{SL}(2,\mathbb{C}) \times T_{I}\mathrm{SL}(2,\mathbb{C}) \to  \mathbb{R},$$ defined by
$$
g_{I}(v+u,l+p):=g_{I}^{1}(v,l)+g_{I}^{2}(u,p), \qquad v,l \in T_{I}\mathbb{H}^{3}, u,p \in T_{I}SU(2)
$$
is an inner product in $T_{I}\mathrm{SL}(2,\mathbb{C})$. Let us consider the corresponding left invariant Riemannian metric in $\mathrm{SL}(2,\mathbb{C})$:
$$g_{s}(u,v):=g_{I} \big( D_{s}L_{s^{-1}}(u), D_{s}L_{s^{-1}}(v) \big), \qquad u,v \in T_{s}\mathrm{SL}(2,\mathbb{C}),
$$
where $L_{s^{-1}}: G \to G$ is the left multiplication by $s^{-1}$ and $D_{s}$ denotes the differential of a function at the point $s$.

After some computation, we prove that $\rho$ is a quasi-isometry, with respect to the Riemannian metric $g_{I}$ in $\mathrm{SL}(2,\mathbb{C})$ and the hyperbolic metric $g^{1}$ in $\mathbb{H}^{3}$ (for further discussion see \cite[pp. 42-44]{MayraTD}). \\

 Let us consider the convex hull $C(\Gamma)$ of the limit set $\Lambda$ of $\Gamma$. Then $C(\Gamma) \cap \mathbb{H}^{3}$ is a closed, convex and $\Gamma$-invariant subset of $\mathbb{H}^{3}$.
 By definition of a convex-cocompact Kleinian group, we know that the convex-core
$$C(M):= \Gamma \setminus \big( C(\Gamma) \cap \mathbb{H}^{3} \big),
$$
of $\Gamma \setminus \mathbb{H}^{3},$ is compact. Without loss of generality, we can suppose that $(0,0,1) \in C(M)$ and so, by the \v{S}varc Milnor Lemma, $\Gamma  \to  C(M), \ \gamma \mapsto  g \big(  (0,0,1) \big)$ is a quasi-isometry. Then, by the latter and, as $\rho$ is a quasi-isometry, we get that there exists constants $B,C>0$, such that,
$$B^{-1} l(\gamma)-C \le d(\gamma,Id) \le B l(\gamma)+C.
$$

We can now follow the same arguments of E. Ghys to prove that (\ref{f1}) is properly discontinuous.
\end{proof}

\begin{prop} \label{l0}
If $\Gamma \subset \mathrm{SL}(2,\mathbb{C})$ is a classical Kleinian  group of  $\mathrm{SL}(2,\mathbb{C})$ with domain of discontinuity $\Omega$ in $\mathbb{CP}^{1}$. Then $\Gamma_{u}$ acts properly discontinuously and uniformly on $\Omega\times\mathbb{CP}^{1}$, for all $u \in \mathrm{Hom} \big(\Gamma,\mathrm{SL}(2,\mathbb{C}) \big)$.
\end{prop}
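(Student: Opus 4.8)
The plan is to reduce the entire statement to the proper discontinuity of the classical action of $\Gamma$ on its domain of discontinuity $\Omega$, exploiting the fact that, in the first coordinate of $\mathbb{CP}^{1}\times\mathbb{CP}^{1}$, the action of $\Gamma_{u}$ does not depend on $u$ at all. Concretely, I would first recall from the $\mathrm{SO}(4,\mathbb{C})$-equivariant biholomorphism (\ref{de2}) and the explicit action (\ref{defacxc}) that an element $\big(\gamma,u(\gamma)\big)\in\Gamma_{u}$ acts on $\mathbb{CP}^{1}\times\mathbb{CP}^{1}$ with first coordinate the M\"obius transformation $x\mapsto\gamma(x)$ and with a second coordinate governed by $u(\gamma)$. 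The set $\Omega\times\mathbb{CP}^{1}$ sits inside $\mathrm{Q}_{2}\cong\mathbb{CP}^{1}\times\mathbb{CP}^{1}$, and the decisive observation is that the first factor is acted on by the underlying classical Kleinian group $\Gamma$ alone, for which $\Omega$ is precisely the locus of proper discontinuity, independently of the homomorphism $u$.

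I would then fix an arbitrary compact set $K\subset\Omega\times\mathbb{CP}^{1}$ and let $K_{1}:=\pi_{1}(K)$ be its image under the projection $\pi_{1}$ onto the first factor; since $\pi_{1}$ is continuous and $K\subset\Omega\times\mathbb{CP}^{1}$, the set $K_{1}$ is a compact subset of $\Omega$. The key inequality is that whenever the translate $\big(\gamma,u(\gamma)\big)(K)$ meets $K$, comparing the first coordinates of a common point yields $\gamma(x)=x'$ for some $x,x'\in K_{1}$, hence $\gamma(K_{1})\cap K_{1}\neq\emptyset$. Therefore
\[
\big\{\gamma\in\Gamma:\ \big(\gamma,u(\gamma)\big)(K)\cap K\neq\emptyset\big\}
\ \subseteq\
S:=\big\{\gamma\in\Gamma:\ \gamma(K_{1})\cap K_{1}\neq\emptyset\big\}.
\]
Because $\Gamma$ acts properly discontinuously on $\Omega$ and $K_{1}\subset\Omega$ is compact, the set $S$ is finite, which already establishes proper discontinuity of $\Gamma_{u}$ on $\Omega\times\mathbb{CP}^{1}$ for every $u$.

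For the uniformity I would simply observe that $S$ depends only on $\Gamma$ and on $K_{1}=\pi_{1}(K)$, and in no way on $u$; indeed $u$ enters only through the compact second factor $\mathbb{CP}^{1}$, which imposes no constraint. Hence the same finite set $S$ controls the translates of $K$ for all $u$ simultaneously, so both the number of translates meeting $K$ and the maximal word length $\max_{\gamma\in S}l(\gamma)$ are bounded independently of $u$, which is exactly the uniformity asserted. In contrast to Proposition \ref{p1}, there is no genuine analytic obstacle here: the whole content is that the finiteness constraint lives entirely in the $u$-independent first coordinate, while the second coordinate ranges over the compact space $\mathbb{CP}^{1}$. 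The only mild point to verify carefully is the explicit shape of the action through the identification (\ref{de2})--(\ref{defacxc}), together with the elementary fact that $\pi_{1}$ carries compact subsets of $\Omega\times\mathbb{CP}^{1}$ to compact subsets of $\Omega$.
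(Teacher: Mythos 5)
Your proof is correct and takes essentially the same route as the paper: both reduce the statement to the $u$-independent action of $\Gamma$ on the first factor via the continuous, $\Gamma$-equivariant projection $\Omega\times\mathbb{CP}^{1}\to\Omega$, with compactness of the fiber $\mathbb{CP}^{1}$ making the reduction work. Your write-up simply makes the argument explicit through the finite set $S$ (whose $u$-independence gives the uniformity), whereas the paper phrases the same idea as proper discontinuity of the parametrized action on $\mathrm{Hom}\big(\Gamma,\mathrm{SL}(2,\mathbb{C})\big)\times\big(\Omega\times\mathbb{CP}^{1}\big)$.
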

\begin{proof} We will show that 
\begin{eqnarray} \label{upoc}
\Gamma \times \Big( \mathrm{Hom} \big(\Gamma,\mathrm{SL}(2,\mathbb{C}) \big) \times  \big( \Omega \times\mathbb{CP}^{1} \big) \Big) & \to &
\Big( \mathrm{Hom} \big(\Gamma,\mathrm{SL}(2,\mathbb{C}) \big) \times  \big( \Omega \times\mathbb{CP}^{1} \big) \Big), \nonumber\\
\Big( \gamma, \big(u,(z,w)  \big) \Big) & \mapsto & \Big( u, \big( \gamma z, u(\gamma^{-1}) w \big) \Big),
\end{eqnarray}
is properly discontinuous. \\

As the projection
\begin{eqnarray*}
   \Omega \times\mathbb{CP}^{1}  & \to & \Omega, \nonumber\\
(x,w)  & \mapsto & x
\end{eqnarray*}
is continuous and  $\Gamma$-equivariant and $\Gamma$ acts properly discontinuously on $\Omega$, it follows that $\Gamma$ acts properly discontinuously on $\Omega\times\mathbb{CP}^{1}$.  As $\Gamma$ acts properly discontinuously on the second factor of (\ref{upoc}), then,
we get the Proposition.
\end{proof}


\section{Divergent sequences and limit sets}  \label{s4}
 We begin this Section giving a formula to compute the accumulation points of the orbits of compact sets in $\mathbb{CP}^{m}$ for divergent sequences of
$\mathrm{GL}(m+1,\mathbb{C})$. This formula gives a qualitative expression to the idea of Frances (see Propositions 3, 4 and 5 of \cite{Frances}) that the dynamics of the compact sets, of a divergent sequence and of a compact permutation of this divergent sequence, are equal. \\

Let us consider a continuous action $G \times X \to X$ of a topological group $G$ on a locally compact metric space $X$.

We say that $x \in X$ is \emph{dynamically related} to $y \in X$ if there exists a convergent sequence $(x_{n})$ of $X$  and a divergent sequence $(g_{n})$ of  $G$, such that, $x=\lim x_{n}$, $(g_{n}x_{n})$ is convergent and $y=\lim g_{n}x_{n}$. This relation is not necessarily reflexive nor transitive, but it is symmetric, and this allows us to say, without any ambiguity, that two points are dynamically related. If $(g_{n})$ is divergent, let us denote by $D_{(g_{n})}(x)$ the set of points $y \in X$, such that there exists a sequence $(x_{k})$ of $X$ which converges to $x$, together with a subsequence $(g_{n_{k}})$ of $(g_{n})$, such that $(g_{n_{k}}x_{k})$ is convergent and $y=\lim g_{n_{k}}x_{k}$. Let us define
$$D_{(g_{n})}(U):=\underset{x \in U}{\bigcup}D_{(g_{n})}(x),
$$
and
$$D_{G}(x)=\underset{(g_{n}) \subset G}{\bigcup}D_{(g_{n})}(x), \qquad D_{G}(U)=\underset{(g_{n}) \subset G}{\bigcup}D_{(g_{n})}(U),
$$
where the union is taken over all divergent sequences of $G$ and $U$ is an open set of $X$. That is, $D_{G}(x)$ is the set of all points of $X$ which are dynamically related to $x$.
We say that $y \in X$ is an \emph{accumulation point of the orbit of the compact set} $K \subset X$ if every neighborhood of $y$ intersects infinitely many $G$-translates of $K$.

If $U \subset X$ is an open set and $G$ is a discrete group,
the set $D_{G}(U)$  equals the set of all the accumulation points of the orbits of $G$ of all the compact sets of $U$.

\begin{prop} \label{paokak}
Let $m \in\mathbb{N}$, suppose that $u,\widetilde{u} \in \mathrm{GL}(m+1,\mathbb{C})$, for all $n \in\mathbb{N}$, $g_{n}, u_{n},
\widetilde{u}_{n}$ also belong to $\mathrm{GL}(m+1,\mathbb{C})$,  $u_{n} \to u$, $\widetilde{u}_{n} \to \widetilde{u}$, $(g_{n})$ is a divergent sequence of $\mathrm{GL}(m+1,\mathbb{C})$  and  $U$ is an open set of $\mathbb{CP}^{m}$, then,
\begin{eqnarray*}
D_{(\widetilde{u}_{n} g_{n}u_{n})} (U)=\widetilde{u} \Big( D_{(g_{n})} \big(u(U) \big) \Big).
\end{eqnarray*}
\end{prop}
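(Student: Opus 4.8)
The plan is to prove the asserted set equality by establishing the two inclusions separately, relying on three elementary facts about the action of $\mathrm{GL}(m+1,\mathbb{C})$ on $\mathbb{CP}^{m}$: that the map $(A,z)\mapsto A(z)$ is jointly continuous, that inversion $A\mapsto A^{-1}$ is continuous on $\mathrm{GL}(m+1,\mathbb{C})$, and, crucially, that the limits $u$ and $\widetilde{u}$ lie in $\mathrm{GL}(m+1,\mathbb{C})$ and are therefore invertible. Before anything else I would check that $(\widetilde{u}_{n}g_{n}u_{n})$ is itself a divergent sequence, so that $D_{(\widetilde{u}_{n}g_{n}u_{n})}(U)$ is defined: if some subsequence converged to an element of $\mathrm{GL}(m+1,\mathbb{C})$, then multiplying on the left by $\widetilde{u}_{n_{k}}^{-1}\to\widetilde{u}^{-1}$ and on the right by $u_{n_{k}}^{-1}\to u^{-1}$ would force the corresponding subsequence of $(g_{n})$ to converge in $\mathrm{GL}(m+1,\mathbb{C})$, contradicting the divergence of $(g_{n})$.

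For the inclusion $\widetilde{u}\big(D_{(g_{n})}(u(U))\big)\subseteq D_{(\widetilde{u}_{n}g_{n}u_{n})}(U)$, I would start from a point $y=\widetilde{u}(y')$ with $y'\in D_{(g_{n})}(u(x))$ for some $x\in U$. By definition there is a sequence $x'_{k}\to u(x)$ and a subsequence $(g_{n_{k}})$ with $g_{n_{k}}(x'_{k})\to y'$. Setting $x_{k}:=u_{n_{k}}^{-1}(x'_{k})$, joint continuity together with $u_{n_{k}}^{-1}\to u^{-1}$ gives $x_{k}\to u^{-1}(u(x))=x$, while by construction $u_{n_{k}}(x_{k})=x'_{k}$. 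Hence $\widetilde{u}_{n_{k}}g_{n_{k}}u_{n_{k}}(x_{k})=\widetilde{u}_{n_{k}}\big(g_{n_{k}}(x'_{k})\big)\to\widetilde{u}(y')=y$, so that $y\in D_{(\widetilde{u}_{n}g_{n}u_{n})}(x)\subseteq D_{(\widetilde{u}_{n}g_{n}u_{n})}(U)$.

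The reverse inclusion is symmetric: given $y\in D_{(\widetilde{u}_{n}g_{n}u_{n})}(x)$ for some $x\in U$, I pick $x_{k}\to x$ and a subsequence with $\widetilde{u}_{n_{k}}g_{n_{k}}u_{n_{k}}(x_{k})\to y$. Putting $x'_{k}:=u_{n_{k}}(x_{k})$, continuity gives $x'_{k}\to u(x)\in u(U)$. Applying $\widetilde{u}_{n_{k}}^{-1}$ and using $\widetilde{u}_{n_{k}}^{-1}\to\widetilde{u}^{-1}$ yields $g_{n_{k}}(x'_{k})\to\widetilde{u}^{-1}(y)$, whence $\widetilde{u}^{-1}(y)\in D_{(g_{n})}(u(x))\subseteq D_{(g_{n})}(u(U))$ and therefore $y\in\widetilde{u}\big(D_{(g_{n})}(u(U))\big)$.

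There is no deep obstacle here; the content is entirely in the careful bookkeeping of how convergent sequences of transformations compose with convergent sequences of points, and the whole argument hinges on the invertibility of $u$ and $\widetilde{u}$, which is what allows a realizing sequence for one side to be transported to a realizing sequence for the other by applying $u_{n}^{\pm1}$ and $\widetilde{u}_{n}^{\pm1}$. The step I would be most careful about is the passage to subsequences: each realization of a dynamical relation already involves choosing a subsequence of the given divergent sequence, and one must make sure that the convergences $u_{n}\to u$ and $\widetilde{u}_{n}\to\widetilde{u}$, together with those of their inverses, persist along these subsequences, which they do trivially. This is precisely the quantitative form of Frances's observation that a divergent sequence and a bounded, convergent perturbation of it have the same dynamics on compact sets.
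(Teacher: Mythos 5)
Your proof is correct and follows essentially the same route as the paper's: both arguments transport a realizing sequence from one side of the identity to the other using the convergence of $u_{n}^{\pm 1}$ and $\widetilde{u}_{n}^{\pm 1}$ together with continuity of the projective action, the only (cosmetic) difference being that the paper factors the claim into the two separate identities $D_{(g_{n}u_{n})}\big(u^{-1}(U)\big)=D_{(g_{n})}(U)$ and $D_{(\widetilde{u}_{n}g_{n})}(U)=\widetilde{u}\big(D_{(g_{n})}(U)\big)$ and verifies each with explicit $\epsilon$-estimates via locally uniform convergence, whereas you handle both multiplications in a single pass. Your preliminary verification that $(\widetilde{u}_{n}g_{n}u_{n})$ is again divergent, so that $D_{(\widetilde{u}_{n}g_{n}u_{n})}(U)$ is well defined, is a detail the paper leaves implicit and is a welcome addition.
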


\begin{proof}
We will prove this Proposition for $m=4$, the general proof is similar.
It is well-known that, if a sequence of $4 \times 4$ matrices converges in the  usual topology, then, it converges locally uniformly on $\mathbb{CP}^{4}$; then, $u_{n} \to u$ and $\widetilde{u}_{n} \to \widetilde{u}$, locally uniformly on $\mathbb{CP}^{4}$.
We will first prove that
\begin{equation} \label{paokak1}
D_{(g_{n}u_{n})} \big( u^{-1}(U) \big)  = D_{(g_{n})}(U).
\end{equation}

Suppose that $z \in D_{(g_{n}u_{n})} \big( u^{-1}(U) \big)$; then, we can assume that there exists a convergent sequence $(y_{n})$ of $u^{-1}(U)$ such that $(g_{n} u_{n} (y_{n}))$ is convergent and $z = \lim g_{n} u_{n} (y_{n})$. Let $y:=\lim y_{n} \in u^{-1}(U)$; in order to prove that $z \in D_{(g_{n})}(U)$, we will show that  $u_{n}( y_{n}) \to u(y)$.

Consider any $\epsilon >0$. Since $u_{n} \to u$ locally uniformly, $y_{n} \to y$ and $u$ is continuous, there exists $M \in\mathbb{N}$, such that for all $n \ge M$,
$$d \big(u_{n}(y_{n}),u(y) \big) \le d \big( u_{n}(y_{n}),u(y_{n}) \big)+d \big( u(y_{n}),u(y) \big)< \epsilon.
$$
Then, $u_{n}(y_{n}) \to u(y)$.

Now, suppose that $w \in D_{(g_{n})}(U)$. We can then assume that there exists a convergent sequence $(z_{n})$ of $U$, such that, $( g_{n}z_{n})$ is convergent and $w = \lim g_{n}z_{n}$. Let $z:=\lim z_{n} \in U$; in order to prove that $w \in D_{(g_{n}u_{n})} \big( u^{-1}(U) \big)$, we will show that
$$u_{n}^{-1}(z_{n}) \to u^{-1}(z).$$

Consider any $\epsilon >0$. Since $u_{n}^{-1} \to u^{-1}$
locally uniformly, $z_{n} \to z$ and $u^{-1}$ is continuous, there exists $N \in\mathbb{N}$, such that, for all  $n \ge N$,
$$d \big( u_{n}^{-1}(z_{n}), u^{-1}(z)\big) \le d \big( u_{n}^{-1}(z_{n}),u^{-1}(z_{n}) \big)+d \big(u^{-1}(z_{n}),u^{-1}(z) \big)< \epsilon.
$$

Then, $u_{n}^{-1}(z_{n}) \to u^{-1}(z)$. This proves (\ref{paokak1}). \\

Now, we will prove
\begin{equation} \label{paokak2}
D_{(\widetilde{u}_{n} g_{n})} (U)=\widetilde{u} \big( D_{(g_{n})}(U) \big).
\end{equation}

Consider $z \in D_{(\widetilde{u}_{n} g_{n})} (U)$. We can assume that there exists a convergent sequence $(x_{n})$ of $U$, such that, $\big( \widetilde{u}_{n} g_{n} (x_{n}) \big)$ is convergent and $z=\lim \widetilde{u}_{n} g_{n} (x_{n})$; if not, take a subsequence. In order to show that $z \in \widetilde{u} \big( D_{(g_{n})}(U) \big)$, we will show that there exists  $y \in D_{(g_{n})}(U)$, such that, $z=\widetilde{u}(y)$.

Consider any $\epsilon>0$ and let $z_{n}:=\widetilde{u}_{n} g_{n} (x_{n})$. As $\widetilde{u}_{n}^{-1} \to \widetilde{u}^{-1}$ locally uniformly in $X$, $\widetilde{u}^{-1}$ is continuous and $z_{n} \to z$, there exists $N \in\mathbb{N}$, such that, for all
 $n \ge N$,
$$
d \big( \widetilde{u}^{-1}_{n}(z_{n}), \widetilde{u}^{-1}(z) \big) \le
d \big( \widetilde{u}^{-1}_{n}(z_{n}),\widetilde{u}^{-1}(z_{n}) \big)+d \big( \widetilde{u}^{-1}(z_{n}),\widetilde{u}^{-1}(z) \big)<\epsilon.
$$

So, if $y:= \widetilde{u}^{-1}(z)$, then $g_{n}(x_{n}) \to y$.

Now, suppose that $z \in \widetilde{u} \big( D_{(g_{n})}(U) \big)$, then we can assume that there exists a convergence sequence $(x_{n})$ of $U$, such that $(g_{n}(x_{n}))$ is convergent and if $y:= \lim g_{n} (x_{n})$; then $z= \widetilde{u}(y)$. In order to prove that $z \in D_{(\widetilde{u}_{n} g_{n})}(U)$, we will show that
$$z= \lim \widetilde{u}_{n} g_{n} (x_{n}).$$

Consider any $\epsilon>0$;
since $\widetilde{u}$ is continuous, $\widetilde{u}_{n} \to \widetilde{u}$ locally uniformly and  $g_{n}(x_{n}) \to y$, there exists $N \in\mathbb{N}$, such that, for all $n \ge N$
$$d \big( z, \widetilde{u}_{n} g_{n} (x_{n}) \big) \le d \big( z,\widetilde{u}(g_{n}x_{n}) \big)+d \big( \widetilde{u}(g_{n}x_{n}),\widetilde{u}_{n}(g_{n}x_{n}
) \big) < \epsilon.$$

Then $\widetilde{u}_{n} g_{n} (x_{n}) \to z$. This proves (\ref{paokak2}). 

Finally, (\ref{paokak1}) and (\ref{paokak2}) prove the Proposition.
\end{proof} 

Now, let us consider a torsion-free, finitely generated, (classical) Kleinian group $\Gamma$ and a group morphism $u:\Gamma\to\mathrm{SL}(2,\mathbb{C})$   sufficiently close to the constant morphism. Recall the definition of the group $\Gamma_{u}$ given in (\ref{gamma}). Recall also that, in Section \ref{ss3}, we saw that $\Gamma_{u}$ is a subgroup of  $\mathrm{PO}(5,\mathbb{C})$ and a deformation of $\Gamma$ inside $\mathrm{SO}(4,\mathbb{C})$. 

Also, recall that, in Section \ref{ss3}, we studied the geometry of $\Gamma_{u}$ on the quadric $\mathrm{Q}_{2}$ and on $\Theta=\mathrm{Q}_{3}-\mathrm{Q}_{2}$ (see Propositions \ref{p1} and \ref{l0}). In this Section, we use these results to study the geometry of $\Gamma_{u}$ on $\mathrm{Q}_{3}$.  \\

Now, we will use the last  Proposition and the Cartan decomposition  for $\mathrm{SO}(4,\mathbb{C})$ (see \cite[p. 397]{Knapp} and \cite[pp. 74,75]{MayraTD}) to compute all the accumulation points of orbits of compact sets of $\mathrm{Q}_{3}$ for discrete subgroups of $\mathrm{SO}(4,\mathbb{C})$.

\begin{prop}[The Cartan decomposition] \label{cardec}
Let us denote by $A^{+}$  the group of all matrices of the form
\begin{equation} \label{am}
\left( \begin{array}{ccccc}
           e^{\lambda}   &    0        & 0 &       0     &     0         \\
            0            &   e^{\mu}   & 0 &       0     &     0          \\
            0            &    0        & 1 &       0     &     0           \\
            0            &    0        & 0 &   e^{-\mu}  &     0           \\
            0            &    0        & 0 &       0     &  e^{-\lambda}
            \end{array}  \right),
\end{equation} where $\lambda,\mu\in\mathbb{R}$.
Then, there exists a maximal compact subgroup $K$ of 
 $\mathrm{SO}(4,\mathbb{C})$, such that,
$$\mathrm{SO}(4,\mathbb{C})=\mathrm{K} A^{+} \mathrm{K}.
$$
\end{prop}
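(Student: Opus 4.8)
The plan is to deduce the decomposition from the corresponding (singular value) decomposition of $\mathrm{SL}(2,\mathbb{C})$, transported through the surjective homomorphism $\psi:\mathrm{SL}(2,\mathbb{C})\times\mathrm{SL}(2,\mathbb{C})\to\mathrm{SO}(4,\mathbb{C})$ constructed in Subsection \ref{aut}. Since $A^{+}$ as defined is the \emph{full} diagonal torus (the parameters $\lambda,\mu$ range over all of $\mathbb{R}$, not merely over a positive Weyl chamber), I will not need to keep track of a chamber, and the whole argument reduces to two applications of the $\mathrm{SL}(2,\mathbb{C})$ case together with one explicit computation.

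First I would recall the $KAK$ (polar) decomposition of $\mathrm{SL}(2,\mathbb{C})$: for any $g\in\mathrm{SL}(2,\mathbb{C})$ the positive Hermitian matrix $g^{*}g$ has determinant one, so $g^{*}g=k\,\mathrm{diag}(e^{2t},e^{-2t})\,k^{-1}$ for some $k\in\mathrm{SU}(2)$ and $t\in\mathbb{R}$, from which one obtains
\[
g=k_{1}\,\mathrm{diag}(e^{t},e^{-t})\,k_{2},\qquad k_{1},k_{2}\in\mathrm{SU}(2),\ t\in\mathbb{R}.
\]
Hence $\mathrm{SL}(2,\mathbb{C})=\mathrm{SU}(2)\,D\,\mathrm{SU}(2)$ with $D:=\{\mathrm{diag}(e^{t},e^{-t}):t\in\mathbb{R}\}$, and applying this in each factor gives
\[
\mathrm{SL}(2,\mathbb{C})\times\mathrm{SL}(2,\mathbb{C})=\big(\mathrm{SU}(2)\times\mathrm{SU}(2)\big)\,\big(D\times D\big)\,\big(\mathrm{SU}(2)\times\mathrm{SU}(2)\big).
\]
I then set $K:=\psi\big(\mathrm{SU}(2)\times\mathrm{SU}(2)\big)$. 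As the continuous image of a compact group, $K$ is compact. Moreover $\psi$ is a two-to-one covering homomorphism whose finite kernel $\{(I,I),(-I,-I)\}$ lies inside $\mathrm{SU}(2)\times\mathrm{SU}(2)$, so $\psi$ is proper; the preimage of any compact subgroup of $\mathrm{SO}(4,\mathbb{C})$ containing $K$ would then be a compact subgroup of $\mathrm{SL}(2,\mathbb{C})\times\mathrm{SL}(2,\mathbb{C})$ containing the maximal compact $\mathrm{SU}(2)\times\mathrm{SU}(2)$, hence equal to it, and therefore $K$ is maximal compact. Since $\psi$ is surjective, applying it to the displayed factorization yields $\mathrm{SO}(4,\mathbb{C})=K\,\psi(D\times D)\,K$, so it remains only to identify $\psi(D\times D)$ with $A^{+}$.

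This identification is the one genuinely computational step. Using the embedding (\ref{enc}) and the action (\ref{acsl}), a direct computation shows that $\big(\mathrm{diag}(e^{\lambda},e^{-\lambda}),\mathrm{diag}(e^{\mu},e^{-\mu})\big)$ acts on $[z_{1}:z_{2}:z_{3}:z_{4}:z_{5}]$ by scaling the coordinates by $e^{\lambda-\mu},e^{\lambda+\mu},1,e^{-\lambda-\mu},e^{-\lambda+\mu}$, respectively. The change of parameters $(\lambda,\mu)\mapsto(\lambda-\mu,\lambda+\mu)$ is a linear bijection of $\mathbb{R}^{2}$, so as $\lambda,\mu$ run over $\mathbb{R}$ these transformations are exactly the matrices of the form (\ref{am}); that is, $\psi(D\times D)=A^{+}$. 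Combined with the previous paragraph this gives $\mathrm{SO}(4,\mathbb{C})=KA^{+}K$, as claimed. The main obstacle is pure bookkeeping: one must verify that the exponents produced by $\psi$ genuinely fill out (\ref{am}) under the stated reparametrization, and confirm the maximality (rather than mere compactness) of $K$; both are routine given the explicit description of $\psi$ recorded in Subsection \ref{aut}.
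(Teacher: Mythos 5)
Your proof is correct, but it takes a genuinely different route from the paper: the paper gives no argument of its own for this Proposition, simply invoking the general Cartan decomposition $G=KA^{+}K$ for reductive Lie groups (the citation is to Knapp \cite[p. 397]{Knapp}, with the computational details deferred to the author's thesis \cite[pp. 74,75]{MayraTD}), whereas you derive the statement from scratch through the covering $\psi$. Your bookkeeping checks out: with $f=\mathrm{diag}(e^{\lambda},e^{-\lambda})$ and $g=\mathrm{diag}(e^{\mu},e^{-\mu})$, the map $x\mapsto fxg^{-1}$ scales $(z_{1},z_{2},z_{3},z_{4},z_{5})$ by $(e^{\lambda-\mu},e^{\lambda+\mu},1,e^{-\lambda-\mu},e^{-\lambda+\mu})$, and since $(\lambda,\mu)\mapsto(\lambda-\mu,\lambda+\mu)$ is a linear bijection of $\mathbb{R}^{2}$, you get $\psi(D\times D)=A^{+}$ precisely because the Proposition's $A^{+}$ is the full diagonal torus rather than a closed Weyl chamber --- a point you correctly flag as the reason no chamber bookkeeping is needed. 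Your maximality argument for $K=\psi\big(\mathrm{SU}(2)\times\mathrm{SU}(2)\big)$ is also sound: $\psi$ is a finite covering homomorphism, hence proper, the kernel $\{(I,I),(-I,-I)\}$ lies inside $\mathrm{SU}(2)\times\mathrm{SU}(2)$ so that $\psi^{-1}(K)=\mathrm{SU}(2)\times\mathrm{SU}(2)$ on the nose, and maximality of $\mathrm{SU}(2)\times\mathrm{SU}(2)$ itself follows by projecting any larger compact subgroup to each factor. As for what each approach buys: your argument is elementary and self-contained, produces $A^{+}$ directly in the paper's coordinates adapted to $q=z_{1}z_{5}-z_{2}z_{4}-z_{3}^{2}$ (sidestepping the change of basis needed to match the standard form used in the general theory), and identifies $K$ concretely as $\big(\mathrm{SU}(2)\times\mathrm{SU}(2)\big)/\big\{(I,I),(-I,-I)\big\}$; the citation route is shorter and yields the sharper chamber-normalized version of the decomposition, which is closer to what the paper actually uses afterwards when it arranges $\lambda_{n},\mu_{n}\to+\infty$ (via the element $i$) in the decompositions $g_{n}=\widetilde{u}_{n}\,i\,a_{n}\,i^{-1}\,\overline{u}_{n}$ --- though that refinement is not part of the Proposition as stated, so your proof fully establishes the claim.
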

Let us consider the Cartan decomposition of $\mathrm{SO}(4,\mathbb{C})$. \\

Let us consider the compactifications  $\mathbb{R} \cup \{ - \infty \} \cup \{ + \infty \}$ and $\overline{\mathbb{R}}:=\mathbb{R} \cup \{ \infty \}$ of $\mathbb{R}$.
Let  $(g_{n})$ be a sequence of   $\mathrm{SO}(4,\mathbb{C})$ and $g_{n}=u_{n} \widetilde{a}_{n} \widetilde{u}_{n}$, be the Cartan decomposition of $g_{n}$, where  $u_{n},
\widetilde{u}_{n} \in K, \widetilde{a}_{n} \in A^{+}$.

We claim that there exists $i \in \mathrm{O}(4, \mathbb{C})$, such that,
 \begin{equation} \label{am}
a_{n}:= i^{-1} \widetilde{a}_{n} i =\left( \begin{array}{ccccc}
           e^{\lambda_{n}}   &       0         & 0 &       0         &     0         \\
            0                &   e^{\mu_{n}}   & 0 &       0         &     0          \\
            0                &       0         & 1 &       0         &     0           \\
            0                &       0         & 0 &   e^{-\mu_{n}}  &     0           \\
            0                &       0         & 0 &       0         &  e^{-\lambda_{n}}
            \end{array}  \right),
\end{equation}
where, if  $(\lambda_{n})$ or $(\mu_{n})$ converges to $\infty$, then it converges to $+ \infty$. Also,  $i$ is the identity matrix, or it is an element of $\mathrm{O}(4,\mathbb{C}) - \mathrm{SO}(4,\mathbb{C})$, such that, if restricted to $\mathrm{Q}_{2}$, can be represented in coordinates as
\begin{equation} \label{propi}
(x,y) \mapsto  \big( g(y),f(x) \big),
\end{equation}
where $f,g \in\mathrm{SL}(2,\mathbb{C})$. In particular, in this last case, $i$ interchanges the direction of the light geodesics contained in $\mathrm{Q}_{2}$; the reader can consult \cite[pp. 76, 77]{MayraTD} for an example of this.  

We say that $(g_{n})$ \emph{tends simply to infinity} if:
\begin{itemize}
\item The sequences $(u_{n})$ and $(\widetilde{u}_{n})$ converge,
\item The sequences $(\lambda_{n})$, $(\mu_{n})$ and $(\lambda_{n}-\mu_{n})$, converge in
$\overline{\mathbb{R}}$.
\end{itemize}

Following Frances in \cite[p. 8]{Frances}, if $(g_{n})$ is a sequence of $\mathrm{O}(4,\mathbb{C})$ which tends simply to infinity, we say that $(g_{n})$ is of
\emph{balanced distortion} if $(\lambda_{n})$ and $(\mu_{n})$ converge to $+\infty$  and $(\lambda_{n}-\mu_{n})$ converges to a point in $\mathbb{R}$.

We say that $(g_{n})$ is of  \emph{bounded distortion} if one of the sequences $(\lambda_{n})$ and $( \mu_{n} )$ converge to $+\infty$ and the other converges to a point in $\mathbb{R}$. We say that $(g_{n})$ is of
 \emph{mixed distortion} if $(\lambda_{n})$ and $(\mu_{n})$ converge to $+\infty$ and $(\lambda_{n}-\mu_{n})$ converge to $\infty$.

It is clear that every sequence of $\mathrm{SO}(4,\mathbb{C})$ which tends simply to infinity is of one of these kinds.

Now, we will show that this classification corresponds to the different dynamics of the orbits of compact sets in $\mathrm{Q}_{3}$.

Also, from the definitions, it follows that, in order to compute the accumulation points of the orbits of the compact sets in $\mathrm{Q}_{3}$ for discrete subgroups of $\mathrm{O}(4,\mathbb{C})$, it is enough to consider only the sequences which tend simply to infinity.

\begin{prop} \label{sdb}
If $(g_{n})$ is a sequence of balanced distortion of $\mathrm{SO}(4,\mathbb{C})$. Then, there exist two parallel light geodesics
$\Delta^{+}$ and $\Delta^{-}$ of $\mathrm{Q}_{2}$, such that
\begin{enumerate} \label{sdba}
\item $D_{(g_{n})}(Q_{3}-\Delta^{-})=\Delta^{+}$
\item For all $q \in \Delta^{+}$, there exists a light geodesic $l_{q} \subset \mathrm{Q}_{2}$, which is transversal to $\Delta^{-}$ such that $$l_{q}-\Delta^{-} \subset D_{(g_{n})}(q).$$
If $p \neq q$, then $l_{q} \cap l_{p} =\emptyset$. Also, the collection $\{ l_{q} \}_{q \in \Delta^{+}}$ foliates $\mathrm{Q}_{2}-\Delta^{-}$.
\item If the sequence $(g_{n})$ is of the form (\ref{am}); then, the function that assigns to each  $q \in \Delta^{+}$ its corresponding $l_{q}$, constructed in part 2 of this Proposition, is a  M\"{o}bius transformation.
\end{enumerate}
\end{prop}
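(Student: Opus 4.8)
The plan is to reduce the general balanced sequence to the diagonal model (\ref{am}) by means of the Cartan decomposition (Proposition \ref{cardec}) together with Proposition \ref{paokak}, and then to read off the dynamics on $\mathrm{Q}_2\cong\mathbb{CP}^1\times\mathbb{CP}^1$ through the biholomorphism (\ref{de2}). Concretely, writing $g_n=u_n\widetilde a_n\widetilde u_n$ with $u_n,\widetilde u_n\in K$ and passing to the normalized diagonal part $a_n=i^{-1}\widetilde a_n i$ of the form (\ref{am}), I would set $\bar u_n:=u_n i$ and $\bar v_n:=i^{-1}\widetilde u_n$, so that $g_n=\bar u_n a_n\bar v_n$. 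Since $(g_n)$ tends simply to infinity, $u_n$ and $\widetilde u_n$ converge, hence $\bar u_n\to\bar u$ and $\bar v_n\to\bar v$ in $\mathrm{O}(4,\mathbb{C})$; Proposition \ref{paokak} then gives $D_{(g_n)}(U)=\bar u\big(D_{(a_n)}(\bar v(U))\big)$ for every open $U$. Thus everything reduces to computing $D_{(a_n)}$ for the diagonal sequence and transporting the answer by the fixed maps $\bar u,\bar v$.

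For the diagonal model I would first determine how $a_n$ acts on $\mathbb{CP}^1\times\mathbb{CP}^1$. Expressing $a_n$ through (\ref{de2}) and (\ref{defacxc}) as a pair of Möbius maps, one checks that the first factor is $[x_1:x_2]\mapsto[e^{(\lambda_n+\mu_n)/2}x_1:e^{-(\lambda_n+\mu_n)/2}x_2]$ and the second is $[y_1:y_2]\mapsto[e^{(\mu_n-\lambda_n)/2}y_1:e^{(\lambda_n-\mu_n)/2}y_2]$. The balanced hypothesis is exactly what makes this transparent: since $\lambda_n+\mu_n\to+\infty$ the first factor is strongly proximal, attracting to $[1:0]$ and repelling from $[0:1]$, whereas since $\lambda_n-\mu_n$ converges in $\mathbb{R}$ the second factor converges to a fixed Möbius transformation $h_\infty$ (with multiplier $\lim e^{\mu_n-\lambda_n}$). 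Using (\ref{de2}) I would then identify the attracting and repelling light geodesics of $a_n$ as $\Delta^+_0=\pi\langle e_1,e_2\rangle=\{[1:0]\}\times\mathbb{CP}^1$ and $\Delta^-_0=\pi\langle e_4,e_5\rangle=\{[0:1]\}\times\mathbb{CP}^1$, both vertical and hence parallel in the sense of Proposition \ref{lg}.

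With this picture part (1) is essentially immediate: on $\mathrm{Q}_3$ any point outside $\Delta^-_0$ has nonzero component in the top eigenplane $\langle e_1,e_2\rangle$ (because $z_1=z_2=0$ forces $z_3=0$ on the quadric), so its $a_n$-image converges into $\Delta^+_0$, and as the second coordinate $h_\infty(y)$ ranges over $\mathbb{CP}^1$ these limits fill $\Delta^+_0$. Transporting by $\bar u,\bar v$ — which share the same parity in $i$ with respect to $\mathrm{O}(4,\mathbb{C})-\mathrm{SO}(4,\mathbb{C})$, so that they act the same way on the two rulings — yields parallel geodesics $\Delta^+=\bar u(\Delta^+_0)$ and $\Delta^-=\bar v^{-1}(\Delta^-_0)$ with $D_{(g_n)}(\mathrm{Q}_3-\Delta^-)=\Delta^+$. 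For part (2) the key object is the limit map $h:\mathrm{Q}_3-\Delta^-_0\to\Delta^+_0$, $(x,y)\mapsto([1:0],h_\infty(y))$: its fiber over $q=([1:0],w)$ is the horizontal geodesic $l_q=\mathbb{CP}^1\times\{h_\infty^{-1}(w)\}$ with its point on $\Delta^-_0$ removed. These fibers are transversal to $\Delta^-_0$, pairwise disjoint, and foliate $\mathrm{Q}_2-\Delta^-_0$. Each $p\in l_q-\Delta^-_0$ satisfies $a_n p\to q$, so $p$ is dynamically related to $q$; invoking the symmetry of the dynamical relation recorded after its definition (equivalently, reading the relation through the divergent sequence $(g_n^{-1})$) gives $l_q-\Delta^-_0\subset D_{(g_n)}(q)$, which is preserved by the biholomorphisms $\bar u,\bar v$. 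For part (3), when $g_n$ is exactly of the form (\ref{am}) one has $\bar u=\bar v=\mathrm{id}$ and $\Theta=\mathrm{Q}_3-\mathrm{Q}_2$ of (\ref{theta}) is acted on explicitly; in the coordinate $w$ on $\Delta^+$ and the coordinate $h_\infty^{-1}(w)$ labelling $l_q$, the correspondence $q\mapsto l_q$ is the Möbius transformation $w\mapsto h_\infty^{-1}(w)$.

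The main obstacle I anticipate is the precise perturbation analysis behind part (2): one must exhibit, for each target $p\in l_q-\Delta^-_0$, sequences approaching $q$ whose images converge to $p$, which requires balancing the strong expansion of the first factor near the repelling point $[0:1]$ against the bounded behaviour of the second factor, while respecting the quadric constraint $z_1z_5-z_2z_4-z_3^2=0$ that couples the perturbations of the coordinates. A secondary but genuine difficulty is the bookkeeping for the boundary maps $\bar u,\bar v$ and for the possible swap $i\in\mathrm{O}(4,\mathbb{C})-\mathrm{SO}(4,\mathbb{C})$ of (\ref{propi}): one has to verify that transporting by these fixed maps keeps $\Delta^+$ and $\Delta^-$ in a common ruling (so they remain parallel), sends each $l_q$ into the transversal ruling, and carries the Möbius dependence $q\mapsto l_q$ over to the general diagonal case.
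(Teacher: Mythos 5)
Your proposal is correct and follows essentially the same route as the paper: reduction to the diagonal model (\ref{am}) via the Cartan decomposition and Proposition \ref{paokak}, explicit computation of the limit dynamics (the paper works in homogeneous coordinates through the submersion $s$, you through the two M\"obius factors on $\mathbb{CP}^{1}\times\mathbb{CP}^{1}$, which is the same calculation), and transport of $\Delta^{\pm}$ and of the horizontal fibers $l_{q}$ by the limits of the compact factors, with the same parity argument for the ruling-swap $i$. Your symmetric, inverse-sequence reading of the inclusion in part (2) is precisely how the paper itself uses (\ref{auxp11}) --- it shows $q\in D_{(a_{n})}(p)$ for each $p\in l_{q}-\Delta^{-}$ (constant sequences suffice, so the perturbation analysis you worried about is not needed) and records the conclusion as a dynamical relation --- so there is no gap relative to the paper's own argument.
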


\begin{proof} We will first consider sequences of the form (\ref{am}); in other words, let
\begin{displaymath}
a_{n}:= \left( \begin{array}{ccccc}
            e^{\lambda_{n}}  & 0  & 0    & 0 & 0 \\
            0  &  e^{\mu_{n}}  & 0 & 0 & 0  \\
            0 & 0 & 1 & 0 & 0 \\
            0 & 0 & 0 &  e^{-\mu_{n}} & 0  \\
           0 & 0 & 0 & 0 & e^{-\lambda_{n}}
           \end{array}
    \right),
\end{displaymath}
where $(\lambda_{n})$ and $(\mu_{n})$ are sequences of $\mathbb{R}$ which converge to $+\infty$ and $(\lambda_{n}-\mu_{n})$ converges in $\mathbb{R}$. In order to prove part 1 of this Proposition, we will prove that, for all $y \in \mathrm{Q}_{3}-\Delta^{-}$, there exists a point $p \in\Delta^{+}$, such that,  $D_{(g_{n})}(y)=p$ and, for all $p \in\Delta^{+}$,  there exists a point $y \in \mathrm{Q}_{3}-\Delta^{-}$, such that, $D_{(g_{n})}(y)=p$.

Let  $\nabla^{+}$ and $\nabla^{-}$ be the light geodesics  $\pi \big( \langle e_{1},e_{2} \rangle \big)$ and $\pi \big( \langle e_{4},e_{5} \rangle \big)$, respectively, and
$\delta:= \underset{n \to \infty}{\lim} (\lambda_{n}-\mu_{n})$.

Recall the definition of $C_{4}$ given in (\ref{defc}).
The submersion
$$\bar{s}: C^{4} - \langle e_{4},e_{5} \rangle  \to \mathbb{C}^{2}, \  \bar{s}(x_{1},x_{2},x_{3},x_{4},x_{5}):=(x_{1},e^{-\delta}x_{2}),$$ is well-defined in the quotient  $s: \mathrm{Q}_{3} -\nabla^{-} \to \nabla^{+}, \ s \big( [x] \big):= \big[\bar{s}(x) \big]$.

Consider $y=[z_{1}:z_{2}:z_{3}:z_{4}:z_{5}] \notin \nabla^{-}$; then, $z_{1} \neq 0$ or $z_{2} \neq 0$. Take  $z_{1} \neq 0 $ (the other case is similar), then, we can suppose that $z_{1}=1$. Let $(y_{n})$ be any sequence in $\mathrm{Q}_{3}$ which converges to $y$, then, for all $n \in\mathbb{N}$,
$$y_{n}=[1:y_{n}^{(2)}:y_{n}^{(3)}:y_{n}^{(4)}:y_{n}^{(5)}],
$$
where for all $j=2, \dots,5$, $\underset{n \to\infty}{\lim} y_{n}^{(j)}=z_{j}$. Then,
$$a_{n}y_{n}=[e^{\lambda_{n}}:e^{\mu_{n}}y_{n}^{(2)}:y_{n}^{(3)}:
e^{-\mu_{n}} y_{n}^{(4)}: e^{-\lambda_{n}}y_{n}^{(5)}]= [1:e^{\mu_{n}-\lambda_{n}}y_{n}^{(2)}:e^{-\lambda_{n}}y_{n}^{(3)}:
e^{-\mu_{n}-\lambda_{n}} y_{n}^{(4)}: e^{-2\lambda_{n}}y_{n}^{(5)}],
$$
and
$$\underset{n \to\infty}{\lim}a_{n}y_{n}=[1:e^{-\delta} z_{2}:0:0:0].
$$

Therefore, if $[z_{1}:z_{2}:z_{3}:z_{4}:z_{5}] \notin \nabla^{-}$, then,
\begin{equation} \label{auxp11}
D_{(a_{n})} \big([z_{1}:z_{2}:z_{3}:z_{4}:z_{5}] \big)=[z_{1}:e^{-\delta} z_{2}:0:0:0]=s \big( [z_{1}:z_{2}:z_{3}:z_{4}:z_{5}]  \big).
\end{equation}

Also, any point $p \in\nabla^{+}$ is equal to $[z_{1}:e^{-\delta} z_{2}:0:0:0]$, for some $z_{1},z_{2} \in \mathbb{C}$. This proves part 1 of the Proposition for sequences of the form (\ref{am}).

Recall the $\mathrm{SO}(4,\mathbb{C})$-equivariant biholomorphism (\ref{de2}). Let $z_{2} \in\mathbb{C}$, $p=[1: z_{2}:0:0:0]=\big( [1:0], [z_{2}:1] \big) \in \nabla^{+}$;  if
$$
l_{p}:=\Big\{ [1:z_{2}e^{\delta}:0:y_{4}:z_{2}y_{4}e^{\delta} ] : y_{4} \in\mathbb{C} \Big\} = \Big\{ \big( [1:y_{4}], [-z_{2}e^{\delta}:1] \big): y_{4} \in\mathbb{C} \Big\},
$$
then,  by (\ref{auxp11}), $l_{p} \subset D_{(g_{n})}(p)$. This proves part 2 of this Proposition for sequences of the form (\ref{am}).

Also, by the latter, it is clear that, there exist parametrizations of  $\nabla^{+}$ and of the space of horizontal light geodesics, such that, the function that assigns to each $q \in \Delta^{+}$ its corresponding $l_{q}$ is of the form:
\begin{eqnarray*}
\mathbb{CP}^{1} & \to & \mathbb{CP}^{1} \\
z & \to & -z e^{\delta}.
\end{eqnarray*}

This proves part $3$ of this Proposition. \\

For all  $n \in\mathbb{N}$, let $g_{n}=\widetilde{u}_{n} i  a_{n} i^{-1} \overline{u}_{n}$ be the Cartan decomposition of  $g_{n}$, where
\begin{itemize}
 \item $(a_{n})$ is of the form   (\ref{am}), 
 \item $(\lambda_{n})$ and $(\mu_{n})$ are  sequences that converge to $+\infty$, 
 \item $(\lambda_{n}-\mu_{n})$ is a sequence that converges to a point in $\mathbb{R}$,
 \item  $(\widetilde{u}_{n})$ and $(\overline{u}_{n})$ are convergent sequences of  $\mathrm{SO}(4,\mathbb{C})$ 
 \item $i$ is the identity matrix or it is an element of  $\mathrm{O}(4,\mathbb{C})-\mathrm{SO}(4,\mathbb{C})$ of the form  (\ref{propi}). 
 \end{itemize}
 
 Let us define
$\widetilde{u}:=\lim \widetilde{u}_{n}, \ \overline{u}:=\lim \overline{u}_{n}$,  $\Delta^{+}:= \widetilde{u} \big( i ( \nabla^{+} \big) \big)$ and  $\Delta^{-}:= \overline{u}~^{-1}  \big(i( \nabla^{-} ) \big)$.  As $\mathrm{Q}_{2}$ is  $\mathrm{O}(4,\mathbb{C})$-invariant, then $\Delta^{+}$ and $\Delta^{-}$ are light geodesics  contained in   $\mathrm{Q}_{2}$. As $\mathrm{SO}(4,\mathbb{C})$ fixes the direction of the light geodesics contained in $\mathrm{Q}_{2}$, by the properties of $i$, we have that  $\Delta^{+}$ and $\Delta^{-}$ are parallel light geodesics.

If $y \in\Delta^{-}$, then $i^{-1} \overline{u}(y) \in \nabla^{-}$. So, by part 1 of this Proposition for sequences of the form (\ref{am}), $D_{(a_{n})} \big( i^{-1} \overline{u}(y) \big) \in \nabla^{+}$. Then, by Proposition \ref{paokak}, $$D_{(\widetilde{u_{n}}ia_{n}i^{-1}\overline{u}_{n})}(y)=\widetilde{u}~i \Big( D_{(a_{n})} (i^{-1} \overline{u}(y) \big) \Big) \in\Delta^{+}.$$ 
If $q \in \Delta^{+}$, then $i^{-1} \widetilde{u}^{-1}(q) \in \nabla^{+}$. So, by part 1 of this Proposition for for sequences of the form (\ref{am}), there exists $y \in\mathrm{Q}_{3}-\nabla^{-}$, such that, $D_{(a_{n})}(y)=i^{-1} \widetilde{u} ~ ^{-1}(q)$. Then, by Proposition \ref{paokak}, 
$$D_{ ( \widetilde{u}_{n}i a_{n} i^{-1} \overline{u}_{n})} \big( \overline{u}~ ^{-1} i(y) \big)= \widetilde{u} ~ i \big( D_{(a_{n})}(y) \big)= \widetilde{u} ~ i \big( i^{-1} \widetilde{u}~ ^{-1}(q) \big)=q.
$$

This proves part $1$ of this Proposition for all sequences. \\

Let $p \in\Delta^{+}$, define  $q :=i^{-1} \big( \widetilde{u}~^{-1} (p) \big) \in \nabla^{+}$. By part $2$  of this Proposition for sequences of the form (\ref{am}),  there exists a light geodesic  $l_{q} \subset \mathrm{Q}_{3}$ that is transversal to
 $\nabla^{-}$, such that, $l_{q}-\nabla^{-} \subset D_{(a_{n})}(q)$. Also, the collection of all light geodesics
 $l_{q}$, constructed this way, is a folliation of $\mathrm{Q}_{3}-\nabla^{-}$. By Proposition  \ref{paokak} and, as   $\mathrm{O}(4,\mathbb{C})$ sends light geodesics to light geodesics, $l_{p}:=\overline{u}~^{-1} \big( i ( l_{q}) \big)$ is a light geodesic that is transversal to $\Delta^{-}$, such that, $l_{p}-\Delta^{-} \subset D_{(g_{n})}(p)$ and the collection of all light geodesics $l_{q}$, constructed this way, is a folliation of  $\mathrm{Q}_{2}-\Delta^{-}$. This proves part $2$ of the Proposition for all sequences.
 \end{proof}

Recall that if $p \in\mathrm{Q}_{3}$, in Section \ref{s2} we defined  $C(p)$ as the union of all light geodesics in $\mathrm{Q}_{3}$ which contain $p$, $\widetilde{C}(p)$ as the space of all the light geodesics which contain $p$ and we showed that $\widetilde{C}(p)$ is a complex manifold biholomorphic to the Riemann sphere. Let us also recall that, by definition (see Subection \ref{aut}), $\Theta$ is a manifold $\big( \mathrm{SL}(2,\mathbb{C}) \times \mathrm{SL}(2,\mathbb{C}) \big)$-equivariantly biholomorphic to $\mathrm{SL}(2,\mathbb{C})$.

Now, we will study the dynamics of the second kind of sequence which diverges simply to infinity. As we will see later in this Section, the group $\Gamma_{u}$, defined in (\ref{gamma}), for $u$ sufficiently close to the constant map (see Section \ref{ss3}), does not admit sequences of this kind.

\begin{prop} \label{sda}
 Let $( g_{n} )$ be a sequence of bounded distortion of $\mathrm{SO}(4,\mathbb{C})$. Then, there exist two points $p_{+}, p_{-}$ in  $\mathrm{Q}_{2}$ and a biholomorphism
 $\bar{g}_{\infty}$ from the space $\widetilde{C}(p_{-})$ onto the space $\widetilde{C}(p_{+})$, such that
\begin{enumerate}
 \item If $q \notin C^{-}$, then $D_{(g_{n})} ( q )=\{p^{+}\}$.
 \item If $q \in C^{-} - \{p_{-} \}$, then, $D_{(g_{n})} ( q )$ is the image, under  $\bar{g}_{\infty}$, of the light geodesic which contains  $p_{-}$ and $q$ .
  \item There exist points in  $\Theta$ which are dynamically related to each other.
\end{enumerate}
\end{prop}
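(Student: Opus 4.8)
The plan is to follow the scheme used for Proposition~\ref{sdb}: I would first analyse diagonal sequences of the form (\ref{am}) and then recover the general case from Proposition~\ref{paokak}. Since the quadratic form $q$ is symmetric under interchanging the pairs $(z_{1},z_{5})$ and $(z_{2},z_{4})$, I may assume without loss of generality that the diagonal sequence has $\lambda_{n}\to+\infty$ while $\mu_{n}\to\mu_{\infty}$ for some $\mu_{\infty}\in\mathbb{R}$ (the case in which the roles of $\lambda_{n}$ and $\mu_{n}$ are exchanged is symmetric). With this normalization the natural candidates are $p_{+}:=\pi(e_{1})$ and $p_{-}:=\pi(e_{5})$, both of which lie in $\mathrm{Q}_{2}$. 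Using the bilinear form $b$ and the description $C(p)=\pi(\widetilde{p}^{\perp}\cap C_{4})$, one checks that $e_{5}^{\perp}$ is the hyperplane $z_{1}=0$, so that the light cone $C^{-}:=C(p_{-})$ is exactly $\{[z]\in\mathrm{Q}_{3}:z_{1}=0\}$; in particular every point of $\mathrm{Q}_{3}$ either has $z_{1}\neq 0$ (and lies off $C^{-}$) or has $z_{1}=0$ (and lies on $C^{-}$).

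For part (1), I would take any $q=[z_{1}:\cdots:z_{5}]$ with $z_{1}\neq 0$ and any sequence $y_{n}\to q$, and compute $a_{n}y_{n}$ directly. Normalizing the first coordinate to $1$ and factoring out $e^{\lambda_{n}}$, every coordinate other than the first is multiplied by a factor that tends to $0$ (because $\lambda_{n}\to+\infty$ dominates the bounded $\mu_{n}$), so $a_{n}y_{n}\to\pi(e_{1})=p_{+}$. This is uniform enough to give $D_{(a_{n})}(q)=\{p_{+}\}$ for all $q\notin C^{-}$, exactly as in the balanced case.

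The heart of the argument, and the step I expect to be the main obstacle, is part (2). Fix $q=[0:z_{2}:z_{3}:z_{4}:z_{5}]\in C^{-}-\{p_{-}\}$, so that $z_{2}z_{4}+z_{3}^{2}=0$ and $(z_{2},z_{3},z_{4})\neq 0$. For an approaching sequence $y_{n}\to q$ with first coordinate $w_{n}^{(1)}\to 0$, the key quantity is $e^{\lambda_{n}}w_{n}^{(1)}$: along a subsequence it tends to some $c\in\mathbb{C}\cup\{\infty\}$, while the remaining coordinates converge to $(e^{\mu_{\infty}}z_{2},z_{3},e^{-\mu_{\infty}}z_{4},0)$. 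This shows $D_{(a_{n})}(q)$ is contained in the light geodesic $\pi\big(\langle e_{1},(0,e^{\mu_{\infty}}z_{2},z_{3},e^{-\mu_{\infty}}z_{4},0)\rangle\big)$ through $p_{+}$ (the value $c=\infty$ recovering $p_{+}$ itself). The reverse inclusion is the delicate point: for each prescribed $c\in\mathbb{C}$ I must produce a sequence $y_{n}\to q$ lying on $\mathrm{Q}_{3}$ with $e^{\lambda_{n}}w_{n}^{(1)}\to c$, and the constraint $q(y_{n})=0$ couples the first and fifth coordinates; the relation $z_{2}z_{4}+z_{3}^{2}=0$ is exactly what lets me absorb the resulting error by a vanishing perturbation of the middle coordinates (solving for one of $z_{2},z_{3},z_{4}$), so that the whole geodesic is realized. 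Identifying $\widetilde{C}(p_{-})$ and $\widetilde{C}(p_{+})$ with the conic $\mathrm{Q}_{1}$ via Proposition~\ref{lg}, the assignment $\langle e_{5},(0,z_{2},z_{3},z_{4},0)\rangle\mapsto\langle e_{1},(0,e^{\mu_{\infty}}z_{2},z_{3},e^{-\mu_{\infty}}z_{4},0)\rangle$ is the diagonal scaling $[z_{2}:z_{3}:z_{4}]\mapsto[e^{\mu_{\infty}}z_{2}:z_{3}:e^{-\mu_{\infty}}z_{4}]$, which preserves the conic and is therefore a biholomorphism $\bar{g}_{\infty}:\widetilde{C}(p_{-})\to\widetilde{C}(p_{+})$; by construction $D_{(a_{n})}(q)$ is precisely $\bar{g}_{\infty}$ applied to the geodesic through $p_{-}$ and $q$.

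Finally, I would pass to a general sequence $g_{n}=\widetilde{u}_{n}\,i\,a_{n}\,i^{-1}\overline{u}_{n}$ of bounded distortion by setting $\widetilde{u}=\lim\widetilde{u}_{n}$, $\overline{u}=\lim\overline{u}_{n}$, $p_{+}:=\widetilde{u}\,i(\pi(e_{1}))$, $p_{-}:=\overline{u}^{-1}i(\pi(e_{5}))$, and conjugating $\bar{g}_{\infty}$ accordingly; Proposition~\ref{paokak} then transports parts (1) and (2) verbatim, exactly as at the end of the proof of Proposition~\ref{sdb}. Part (3) follows by exhibiting a single point: choosing $q\in C^{-}$ with $z_{3}\neq 0$ (for instance $q=[0:1:1:-1:z_{5}]$, which lies in $\Theta$ since its third coordinate is nonzero), every point $[c:e^{\mu_{\infty}}z_{2}:z_{3}:e^{-\mu_{\infty}}z_{4}:0]$ of $D_{(a_{n})}(q)$ again has nonzero third coordinate, hence lies in $\Theta=\mathrm{Q}_{3}-\mathrm{Q}_{2}$; thus $q$ and such a limit point are two points of $\Theta$ that are dynamically related. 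This is precisely what distinguishes bounded distortion from the balanced case and underlies the later claim that $\Gamma_{u}$ admits no sequences of this kind.
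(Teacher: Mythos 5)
Your proposal is correct and follows essentially the same route as the paper's proof: the same explicit computation on diagonal sequences (including the key construction of approximating sequences on the quadric, where the paper perturbs the fourth coordinate by $y_{n}^{(4)}:=be^{-\lambda_{n}}z_{5}-z_{3}^{2}$ to realize every point of the limit geodesic), the same biholomorphism $\bar{g}_{\infty}$ induced by the diagonal scaling $[z_{2}:z_{3}:z_{4}]\mapsto[e^{\mu_{\infty}}z_{2}:z_{3}:e^{-\mu_{\infty}}z_{4}]$ of $\mathrm{Q}_{1}$, the same transport of the general case via Proposition \ref{paokak}, and the same one-point argument in $\Theta$ for part (3). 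Your explicit symmetry reduction to $\lambda_{n}\to+\infty$, $\mu_{n}\to\mu_{\infty}$, and your handling of the limit value $c=\infty$ as recovering $p_{+}$, are slightly more careful than the paper's write-up but do not constitute a different approach.
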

\begin{proof} We will first consider sequences of the form (\ref{am}): let us consider
\begin{displaymath}
a_{n}= \left( \begin{array}{ccccc}
            e^{\lambda_{n}}  & 0  & 0    & 0 & 0 \\
            0  &  e^{\mu_{n}}  & 0 & 0 & 0  \\
            0 & 0 & 1 & 0 & 0 \\
            0 & 0 & 0 &  e^{-\mu_{n}} & 0  \\
           0 & 0 & 0 & 0 & e^{-\lambda_{n}}
           \end{array}
    \right),
\end{displaymath}
where $\lambda_{n} \to +\infty$ and $\mu_{n} \to \mu_{\infty} < \infty$. Let $q_{-}:=[0:0:0:0:1]$ and
$q_{+}:=[1:0:0:0:0]$.

The map
\begin{displaymath}
h_{\infty}:= \left( \begin{array}{ccccc}
            0                   & 0                   & 0    & 0                  & 1      \\
            0                   & e^{\mu_{\infty}}    & 0    & 0                  & 0      \\
            0                   & 0                   & 1    & 0                  & 0      \\
            0                   & 0                   & 0    & e^{-\mu_{\infty}}  & 0      \\
            1                   & 0                   & 0    & 0                  & 0
           \end{array}
    \right)
\end{displaymath}
belongs to $\mathrm{SO}(5,\mathbb{C})$ and, by Proposition \ref{lg}, induces a biholomorphism
\begin{displaymath}
\bar{h}_{\infty}:= \left( \begin{array}{ccc}
            e^{\mu_{\infty}}    & 0    & 0  \\
            0  & 1  & 0  \\
            0 & 0 & e^{-\mu_{\infty}}
           \end{array}
    \right)
\end{displaymath}
from $\widetilde{C}(q_{-})$ onto $\widetilde{C}(q_{+})$, which sends the light geodesic
$$\overline{\pi \big( \{0\} \times \{z_{2}\} \times \{z_{3} \} \times \{z_{4} \} \times \mathbb{C} \big)},$$
which contains $q_{-}$ and \mbox{$[0:z_{2}:z_{3}:z_{4}:0]$} to the light geodesic
$$\overline{\pi \big( \mathbb{C}  \times \{z_{2} e^{\mu_{\infty}} \} \times \{z_{3} \} \times \{z_{4}  e^{-\mu_{\infty}} \} \times \{0\} \big)},$$
which contains $q_{+}$ and $[0:z_{2} e^{\mu_{\infty}}:z_{3}:z_{4}e^{-\mu_{\infty}}:0]$.

Let $q=[z_{1}:z_{2}:z_{3}:z_{4}:z_{5}] \notin C(q_{-})$; then $z_{1} \neq 0$, so we can suppose that $z_{1}=1$.
 Let $(y_{n})$ be any sequence of $\mathrm{Q}_{3}$ which converges to $q$. Then, for $n \in\mathbb{N}$ sufficiently large, $$y_{n}=[1:y_{n}^{(2)}:y_{n}^{(3)}:y_{n}^{(4)}:y_{n}^{(5)}],$$
and, hence,
$$a_{n}y_{n}=[e^{\lambda_{n}}: y_{n}^{(2)} e^{\mu_{n}}: y_{n}^{(3)}:
y_{n}^{(4)} e^{-\mu_{n}} :
y_{n}^{(5)} e^{-\lambda_{n}}]=
[1: y_{n}^{(2)} e^{\mu_{n}-\lambda_{n}}: y_{n}^{(3)} e^{-\lambda_{n}}:
y_{n}^{(4)}  e^{-\mu_{n}-\lambda_{n}} :
y_{n}^{(5)} e^{-2\lambda_{n}}].
$$

Therefore, if $q \notin C(q_{-})$,
$$
\underset{n \to\infty}{\lim}a_{n}y_{n}=[1:0:0:0:0],
$$
and, hence,
\begin{equation}
D_{(a_{n})} \big( q \big) = \big\{ [1:0:0:0:0] \big\}.
\end{equation}

This proves part 1 of this Proposition for sequences of the form (\ref{am}).

Let $q=[0:z_{2}:z_{3}:z_{4}:z_{5}] \in C(q_{-})-\{q_{-}\}$; then $z_{2} \neq 0$ or $z_{4} \neq 0$. Take $z_{2} \neq 0$ (the other case is similar); then, we can suppose that $q=[0:1: z_{3}: z_{4}:z_{5}]$.

Consider any sequence $(y_{n})$ in $\mathrm{Q}_{3}$ which converges to $q$, then, for $n$ sufficiently large,
$$y_{n}=[y_{n}^{(1)}:1: y_{n}^{(3)}:y_{n}^{(4)}:y_{n}^{(5)}]$$
and
$$y_{n}^{(1)} \to 0, \ y_{n}^{(3)} \to z_{3}, \ y_{n}^{(4)} \to z_{4}, y_{n}^{(5)} \to z_{5}.$$

We have that
$$a_{n}y_{n}=[e^{\lambda_{n}}y_{n}^{(1)}:
e^{\mu_{n}}:
 y_{n}^{(3)}:
y_{n}^{(4)} e^{-\mu_{n}}:y_{n}^{(5)}e^{-\lambda_{n}}].
$$

Suppose that $(a_{n}y_{n})$ is convergent; then, $e^{\lambda_{n}}y_{n}^{(1)} \to b$, for some $b \in \mathbb{C}$, or $e^{\lambda_{n}}y_{n}^{(1)} \to \infty$.

If $e^{\lambda_{n}}y_{n}^{(1)} \to b$,
then,
$$a_{n}y_{n} \to [b: e^{\mu_{\infty}}: z_{3}:e^{-\mu_{\infty}}z_{4}:0].
$$

If $e^{\lambda_{n}}y_{n}^{(1)} \to \infty$, then,
$$a_{n}y_{n} \to [1:0:0:0:0].
$$

Therefore,
$$D_{(a_{n})}(q) \subset \pi \big( \mathbb{C} \times \{ z_{2} e^{\mu_{\infty}} \} \times \{z_{3}\} \times \{z_{4} e^{-\mu_{\infty}} \} \times \{ 0 \} \big).
$$

Let us consider any $b \in\mathbb{C}$ and the sequence $(y_{n})$ in $\mathrm{Q}_{3}$ defined by
$$
y_{n}:=[be^{-\lambda_{n}}:1:z_{3}:y_{n}^{(4)}:z_{5}],
$$
 where $y_{n}^{(4)}:=be^{-\lambda_{n}}z_{5}-z_{3}^{2}$.
 
Then,
$$a_{n}y_{n} \to \big[ b:e^{\mu_{\infty}}z_{2}:z_{3}:e^{-\mu_{\infty}}z_{4}:0 \big],$$
so we get
$$D_{(a_{n})}(q) = \pi \big( \mathbb{C} \times \{ z_{2} e^{\mu_{\infty}} \} \times \{z_{3}\} \times \{z_{4} e^{-\mu_{\infty}} \} \times \{ 0 \} \big).
$$

This proves part 2 of this Proposition for sequences of the form (\ref{am}).

Also, if  $q \in \Theta$, that is $z_{3} \neq 0$, then $D_{(a_{n})}(q)$ intersects to $\Theta$; then, we get part $3$ also for sequences  of the form (\ref{am}). \\

For all   $n \in\mathbb{N}$, let  $g_{n}=\widetilde{u}_{n} i  a_{n} i^{-1} \overline{u}_{n}$ be the Cartan decomposition of  $g_{n}$, where
\begin{itemize}
\item $a_{n}$ is of the form  (\ref{am}),
\item $(\lambda_{n})$ is a sequence which converges to $+\infty$,
\item $(\mu_{n})$ is a sequence that converges in $\mathbb{R}$,
\item $(\widetilde{u}_{n})$ and $(\overline{u}_{n})$ are convergent sequences in  $\mathrm{SO}(4,\mathbb{C})$
\item $i$ is either the identity matrix, or an element of $\mathrm{O}(4,\mathbb{C})-\mathrm{SO}(4,\mathbb{C})$ of the form (\ref{propi}).
\end{itemize}

 Let us define  $p_{-}:=\overline{u}^{-1}(q_{-})$, $p_{+}:=\widetilde{u}(q_{+})$, $\overline{u}:=\lim \overline{u}_{n}$, $\widetilde{u}:=\lim \widetilde{u}_{n}$
 and $g_{\infty}:= \widetilde{u}  \ i  \ h_{\infty} i^{-1} \  \overline{u}$. Observe that $g_{\infty}$ is an element of  $\mathrm{O}(4,\mathbb{C})$ and induces a biholomorphism  $\bar{g}_{\infty}$ from $\widetilde{C}(p_{-})$ onto $\widetilde{C}(p_{+})$.

Then, since  $\mathrm{O}(4,\mathbb{C})$ sends light geodesics to light geodesics and light cones to light cones,  Proposition  \ref{paokak} implies part $2$ in the general case. Also, as  $\widetilde{u} i, i^{-1} \overline{u} \in \mathrm{O}(4,\mathbb{C})$, $C(p_{+})$ and $C(p_{-})$ are not contained in $\mathrm{Q}_{2}$ and $\Theta$ is $\mathrm{O}(4,\mathbb{C})$-invariant, by part $3$ for sequences of the form  (\ref{am}), we get part $3$ for all sequences.
\end{proof}

\begin{prop} \label{sdm}
Suppose $(g_{n})$ is a sequence of mixed distortion of $\mathrm{SO}(4,\mathbb{C})$.  Then, there exist  in $\mathrm{Q}_{2}$ two points $p_{+}$ and $p_{-}$ and two parallel light geodesics  $\Delta^{+}$ and $\Delta^{-}$ which contain $p_{+}$ and $p_{-}$, respectively,  such that
 \begin{enumerate}
\item If $q \notin C(p_{-})$, then $D_{(g_{n})} ( q )= \{p_{+}\}$.
\item If $q \in C(p_{-})
-  \Delta^{-} $, then $D_{(g_{n})} ( q )=\Delta^{+}$.
\item If $q \in \Delta^{-}-\{p_{-}\}$, then $D_{(g_{n})}(q)=C(p_{+})$.
\end{enumerate}
\end{prop}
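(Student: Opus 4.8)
The plan is to follow verbatim the two-step strategy of Propositions~\ref{sdb} and~\ref{sda}: first settle the case of a diagonal sequence of the form~(\ref{am}), then transfer the conclusion to an arbitrary mixed-distortion sequence through its Cartan decomposition and Proposition~\ref{paokak}. Throughout I would write $q_{+}:=[1:0:0:0:0]=\pi(e_{1})$, $q_{-}:=[0:0:0:0:1]=\pi(e_{5})$, $\nabla^{+}:=\pi\big(\langle e_{1},e_{2}\rangle\big)$ and $\nabla^{-}:=\pi\big(\langle e_{4},e_{5}\rangle\big)$. Note $q_{+}\in\nabla^{+}$ and $q_{-}\in\nabla^{-}$, and that $\nabla^{+},\nabla^{-}$ are \emph{parallel} light geodesics in $\mathrm{Q}_{2}$, since under the biholomorphism~(\ref{de2}) both have fixed image and moving kernel. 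A direct computation with the bilinear form $b$ gives $C(q_{-})=\{[0:z_{2}:z_{3}:z_{4}:z_{5}]:z_{2}z_{4}+z_{3}^{2}=0\}$ and $C(q_{+})=\{[z_{1}:z_{2}:z_{3}:z_{4}:0]:z_{2}z_{4}+z_{3}^{2}=0\}$.

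For the diagonal model I would take $a_{n}=\mathrm{diag}(e^{\lambda_{n}},e^{\mu_{n}},1,e^{-\mu_{n}},e^{-\lambda_{n}})$ with $\lambda_{n},\mu_{n}\to+\infty$ and $\lambda_{n}-\mu_{n}\to+\infty$, pick an arbitrary $y_{n}\to q$ in $\mathrm{Q}_{3}$, and read off $\lim a_{n}y_{n}$ after normalizing by the dominant coordinate, exactly as in the earlier proofs. When $z_{1}\neq0$ (that is, $q\notin C(q_{-})$) every coordinate but the first is killed and the limit is $q_{+}$, giving part~1. When $q\in C(q_{-})-\nabla^{-}$ one normalizes $q$ so that $z_{2}\neq0$; the only surviving competition is then between the first coordinate $e^{\lambda_{n}}y_{n}^{(1)}$ and the second $e^{\mu_{n}}y_{n}^{(2)}$, whose ratio $e^{\lambda_{n}-\mu_{n}}y_{n}^{(1)}$ traces out all of $\mathbb{CP}^{1}$ because $\lambda_{n}-\mu_{n}\to+\infty$ while $y_{n}^{(1)}\to0$; hence the accumulation set is exactly $\nabla^{+}$, giving part~2. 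The delicate case is part~3, $q\in\nabla^{-}-\{q_{-}\}$, where (normalizing $y_{n}^{(4)}=1$) the quadric relation forces $y_{n}^{(2)}=y_{n}^{(1)}y_{n}^{(5)}-(y_{n}^{(3)})^{2}$ and the accumulation set turns out to be the full two-dimensional cone $C(q_{+})$.

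To establish part~3 I would prove the two inclusions separately. For $D_{(a_{n})}(q)\subseteq C(q_{+})$: the fifth coordinate $e^{-\lambda_{n}}y_{n}^{(5)}$ is $o$ of the fourth coordinate $e^{-\mu_{n}}$, their ratio being $e^{\mu_{n}-\lambda_{n}}y_{n}^{(5)}\to0$, hence $o$ of the largest coordinate, so every accumulation point has vanishing fifth coordinate; since $a_{n}\in\mathrm{SO}(4,\mathbb{C})$ preserves $\mathrm{Q}_{3}$ and $\mathrm{Q}_{3}$ is closed, the limit also satisfies $q=0$, and the two conditions together place it in $C(q_{+})$. For the reverse inclusion I would exhibit, for each target $[w_{1}:-w_{3}^{2}:w_{3}:1:0]$, the explicit sequence with $y_{n}^{(1)}=w_{1}e^{-\lambda_{n}-\mu_{n}}$, $y_{n}^{(3)}=w_{3}e^{-\mu_{n}}$, $y_{n}^{(5)}=z_{5}$ and $y_{n}^{(2)}$ fixed by the quadric, checking that $y_{n}\to q$ and $a_{n}y_{n}\to[w_{1}:-w_{3}^{2}:w_{3}:1:0]$; the remaining points of $C(q_{+})$ (those with $w_{4}=0$, i.e.\ $\nabla^{+}$) are reached by letting the first two coordinates dominate, and in any case lie in the closure. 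This scale bookkeeping, with three exponential rates $e^{\pm\lambda_{n}},e^{\pm\mu_{n}}$ active at once and the cross-rate $e^{\lambda_{n}-\mu_{n}}\to+\infty$ behaving oppositely to $e^{-\mu_{n}}\to0$, is the main obstacle and the feature separating mixed distortion from the balanced and bounded cases.

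Finally, for a general mixed-distortion sequence I would write its Cartan decomposition $g_{n}=\widetilde{u}_{n}\,i\,a_{n}\,i^{-1}\overline{u}_{n}$ with $\widetilde{u}_{n}\to\widetilde{u}$, $\overline{u}_{n}\to\overline{u}$ in $\mathrm{SO}(4,\mathbb{C})$ and $a_{n}$ as above, set $A:=\widetilde{u}\,i$ and $B:=i^{-1}\overline{u}$, and apply Proposition~\ref{paokak} to get $D_{(g_{n})}(q)=A\big(D_{(a_{n})}(B(q))\big)$. Defining $p_{+}:=A(q_{+})$, $p_{-}:=B^{-1}(q_{-})$, $\Delta^{+}:=A(\nabla^{+})$ and $\Delta^{-}:=B^{-1}(\nabla^{-})$, the three cases transfer verbatim because $A,B\in\mathrm{O}(4,\mathbb{C})$ carry light geodesics to light geodesics and light cones to light cones, so $C(p_{-})=B^{-1}(C(q_{-}))$ and $C(p_{+})=A(C(q_{+}))$; thus $q\notin C(p_{-})\iff B(q)\notin C(q_{-})$, and similarly for the two other regions. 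That $p_{\pm}\in\Delta^{\pm}$ is immediate from $q_{\pm}\in\nabla^{\pm}$, and $\Delta^{+}$ is parallel to $\Delta^{-}$ because $\mathrm{SO}(4,\mathbb{C})$ preserves the direction of light geodesics in $\mathrm{Q}_{2}$ while $i$ is either the identity or of the form~(\ref{propi}), hence acts identically (preserving, or interchanging) on the directions of the parallel pair $\nabla^{+},\nabla^{-}$. This completes the plan.
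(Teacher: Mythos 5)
Your proposal is correct and follows the paper's proof essentially verbatim: the same diagonal model $a_{n}$ of the form (\ref{am}) with the same limit computations for parts 1 and 2, then the same transfer to general sequences via the Cartan decomposition $g_{n}=\widetilde{u}_{n}\,i\,a_{n}\,i^{-1}\overline{u}_{n}$ and Proposition \ref{paokak}, with identical definitions of $p_{\pm}$, $\Delta^{\pm}$ and the same direction-preserving/interchanging argument for parallelism; moreover, your explicit treatment of part 3 (which the paper dispatches with ``similar to the proof of part 2'') is correct, including the scale comparison $e^{\mu_{n}-\lambda_{n}}y_{n}^{(5)}\to 0$ forcing limits into $\{z_{5}=0\}\cap\mathrm{Q}_{3}=C(q_{+})$ and the quadric-corrected sequence realizing each point $[w_{1}:-w_{3}^{2}:w_{3}:1:0]$. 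The only detail you omit is the subcase $\lambda_{n}-\mu_{n}\to-\infty$ of mixed distortion, which the paper itself handles only by remarking that ``the other case is similar.''
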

\begin{proof} Part 1 of this Proposition follows as in Proposition \ref{sda}. 

In order to prove part 2 of this Proposition, we will first consider sequences of the form (\ref{am}), i.e., let us consider
\begin{displaymath}
a_{n}:= \left( \begin{array}{ccccc}
            e^{\lambda_{n}}    & 0    & 0 & 0 & 0 \\
            0  &  e^{\mu_{n}}  & 0 & 0 & 0 \\
            0 & 0 & 1 & 0 & 0 \\
            0 & 0  & 0 &  e^{-\mu_{n}} & 0  \\
           0 & 0 & 0 & 0 & e^{-\lambda_{n}}
           \end{array}
    \right),
\end{displaymath}
where $(\mu_{n}), (\lambda_{n})$ and $(\lambda_{n}-\mu_{n})$ converge to $\infty$. Let us suposse that $(\lambda_{n}-\mu_{n})$ converges to $+\infty$; the other case is similar.

Let $q_{-}:=[0:0:0:0:1], q_{+}:=[1:0:0:0:0]$ and $\nabla^{+}$ and $\nabla^{-}$ be the light geodesics $\pi \big( \langle e_{1},e_{2} \rangle \big)$ and $\pi \big( \langle e_{4},e_{5} \rangle \big)$, respectively, contained in $\mathrm{Q}_{2}$.

If $q \in C(q_{-}) -\nabla^{-}$,  assume that $q=[0:1:z_{3}:z_{4}:z_{5}] \in \mathrm{Q}_{3}$. Let us consider any sequence $(y_{n})$ in $\mathrm{Q}_{3}$ which converges to $q$; then, for  $n  \in\mathbb{N}$ sufficiently large,
$y_{n}=[y_{n}^{(1)}:1:y_{n}^{(3)}:y_{n}^{(4)}:y_{n}^{(5)} ]$,
where
$$y_{n}^{(1)} \to 0, \ y_{n}^{(3)} \to z_{3}, \ y_{n}^{(4)}\to z_{4}, \ y_{n}^{(5)} \to z_{5}.
$$
So, if $q \in C(q_{-}) -\nabla^{-}$, we have
$$ a_{n}y_{n}=[e^{\lambda_{n}} y_{n}^{(1)}:e^{\mu_{n}}: y_{n}^{(3)}: y_{n}^{(4)} e^{-\mu_{n}} :
y_{n}^{(5)} e^{-\lambda_{n}}] =  [e^{\lambda_{n}-\mu_{n}} y_{n}^{(1)}:1: y_{n}^{(3)} e^{-\mu_{n}}: y_{n}^{(4)} e^{-2\mu_{n}} :
y_{n}^{(5)} e^{-\lambda_{n}-\mu_{n}}].
$$

If $(a_{n}y_{n})$ converges, then $(e^{\lambda_{n}-\mu_{n}} y_{n}^{(1)})$ converges in $\mathbb{R} \cup \{\infty\}$.

If $e^{\lambda_{n}-\mu_{n}} y_{n}^{(1)} \to \infty$, then $a_{n}y_{n} \to [1:0:0:0:0]$.

If $e^{\lambda_{n}-\mu_{n}} y_{n}^{(1)} \to a \in \mathbb{R}$, then $a_{n}y_{n} \to [a:1:0:0:0]$.

Therefore, if $q \in C(q_{-}) -\nabla^{-}$
$$D_{(a_{n})} \big( q \big) \subset \nabla^{+}.
$$
Let us consider any $a \in \mathbb{R}$ and the sequence
$$y_{n}=[ a e^{-\lambda_{n}+\mu_{n}}:1:z_{3}:y_{n}^{(4)}:z_{5}], \qquad y_{n}^{(4)}:= a e^{-\lambda_{n}+\mu_{n}}z_{5}-z_{3}^{2},
$$
in $\mathrm{Q}_{3}$.
Then, for $n \in\mathbb{N}$ sufficiently large,
$$ a_{n}y_{n}=[a e^{\mu_{n}}: e^{\mu_{n}}: z_{3} : y_{n}^{(4)} e^{-\mu_{n}}:
e^{-\lambda_{n}} z_{5}
] =  [a: 1: e^{-\mu_{n}} z_{3}: e^{-2\mu_{n}} y_{n}^{(4)}: e^{-\mu_{n}-\lambda_{n}} z_{5}].$$

Then, $a_{n}y_{n} \to [a:1:0:0:0]$ and
$$D_{(a_{n})} \big( q \big)=\nabla^{+}.$$

This proves part 2 of this Proposition for sequences of the form (\ref{am}).

 For all  $n \in\mathbb{N}$, let $g_{n}=\widetilde{u}_{n} i a_{n} i^{-1} \overline{u}_{n}$ be the Cartan decomposition of $g_{n}$, where  
 \begin{itemize}
 \item  $a_{n}$ is of the form  (\ref{am}), 
 \item $(\lambda_{n})$, $(\mu_{n})$ and $(\lambda_{n}-\mu_{n})$ are sequences that converge to $+\infty$,
 \item $(\widetilde{u}_{n})$ and $(\overline{u}_{n})$ are convergent sequences of  $\mathrm{SO}(4,\mathbb{C})$,
 \item $i$ is the identity matrix, or it is an element of  $\mathrm{O}(4,\mathbb{C})-\mathrm{SO}(4,\mathbb{C})$ of the form  (\ref{propi}).
 \end{itemize}
 
  Let us define  $p_{+}:=\widetilde{u} ~ i (q_{+}), p_{-}:=\overline{u}^{-1} ~ i (q_{-})$  and the light geodesics $\Delta^{+}:=\widetilde{u} \big( i(\nabla^{+}) \big)$ and $\Delta^{-}:= \overline{u}~^{-1} \big(i(\nabla^{-}) \big)$. As $\mathrm{Q}_{2}$ is $\mathrm{O}(4,\mathbb{C})$-invariant, then $\Delta^{+}$ and $\Delta^{-}$  are light geodesics  contained in  $\mathrm{Q}_{2}$.  As $\mathrm{SO}(4,\mathbb{C})$ fix the direction of the light geodesics contained in $\mathrm{Q}_{2}$, if $i$ is not the identity, it reverses the direction of them; then, we have that  $\Delta^{+}$ and $\Delta^{-}$ are both horizontal or vertical. By Proposition \ref{paokak}, part 2 of this Proposition for sequences of the form (\ref{am}) and as  $\mathrm{O}(4,\mathbb{C})$ sends light geodesics to light geodesics and light cones to light cones, we get part $2$ of this Proposition in the general case. 
  
  The proof of part $3$ is similar to the proof of part $2$.
\end{proof}

Recall the group $\Gamma_{u}$,  which was defined  in  (\ref{gamma}) and studied in Section \ref{ss3}.
The next Proposition is a complex analog of the  limit set defined by Frances (see \cite[p. 894]{Frances}).
\begin{prop} \label{fls}
 Let \mbox{$\Gamma \subset \mathrm{SL}(2,\mathbb{C})$} be  a torsion free,  (classical) Kleinian group. If $u:\Gamma \to \mathrm{SL}(2,\mathbb{C})$ is a group morphism such that $\Gamma_{u}$ acts properly discontinuously on $\Theta$, then $\Gamma_{u}$ has no sequences of bounded distortion and  acts properly discontinuously on the complement in $\mathrm{Q}_{3}$ of
$$
\Lambda_{F}:= \overline{ \underset{(g_{n})}{\bigcup} \big( \Delta^{+}(g_{n}) \cup \Delta^{-}(g_{n}) \big)} \subset \mathrm{Q}_{2},
$$
where the last union is taken over all the sequences
 $(g_{n})$ of balanced or mixed distortion of $\Gamma_{u}$ and
$\Delta^{+}(g_{n})$ and $\Delta^{-}(g_{n})$ denote the limit light geodesics which correspond to the sequence
$(g_{n})$.
\end{prop}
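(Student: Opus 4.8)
The plan is to prove the two assertions in turn, using the trichotomy of Propositions \ref{sdb}, \ref{sda} and \ref{sdm}, together with the fact (noted after Proposition \ref{cardec}) that, when computing accumulation points of orbits of compact sets in $\mathrm{Q}_{3}$ for a discrete subgroup of $\mathrm{O}(4,\mathbb{C})$, it suffices to examine sequences that tend simply to infinity, each of which is of balanced, bounded or mixed distortion. I would also use the criterion recalled just before Proposition \ref{paokak}: since $\Gamma_{u}$ is discrete, it acts properly discontinuously on an open set $U\subset\mathrm{Q}_{3}$ if and only if $D_{\Gamma_{u}}(U)\cap U=\emptyset$ (one direction is immediate from compactness, the other from the identification of $D_{\Gamma_{u}}(U)$ with the accumulation points of orbits of compact sets of $U$).

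First I would rule out bounded distortion. Suppose, for contradiction, that $\Gamma_{u}$ contains a divergent sequence $(g_{n})$ of bounded distortion. By part $3$ of Proposition \ref{sda} there exist points $x,y\in\Theta$ that are dynamically related through $(g_{n})$; that is, after passing to a subsequence, there is a sequence $x_{k}\to x$ in $\mathrm{Q}_{3}$ with $g_{n_{k}}x_{k}\to y$. Since $\Theta$ is open and $x,y\in\Theta$, we may assume $x_{k}\in\Theta$ and $g_{n_{k}}x_{k}\in\Theta$ for all large $k$. Choosing a compact set $K\subset\Theta$ containing neighbourhoods of both $x$ and $y$, we obtain $g_{n_{k}}K\cap K\neq\emptyset$ for infinitely many distinct elements $g_{n_{k}}\in\Gamma_{u}$, contradicting the hypothesis that $\Gamma_{u}$ acts properly discontinuously on $\Theta$. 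Hence $\Gamma_{u}$ has no sequences of bounded distortion, and every sequence of $\Gamma_{u}$ tending simply to infinity is of balanced or mixed distortion.

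For the second assertion, set $U:=\mathrm{Q}_{3}-\Lambda_{F}$. I would first note that $\Lambda_{F}\subset\mathrm{Q}_{2}$: for every balanced or mixed sequence the geodesics $\Delta^{+}(g_{n})$ and $\Delta^{-}(g_{n})$ lie in $\mathrm{Q}_{2}$ (Propositions \ref{sdb} and \ref{sdm}), and $\mathrm{Q}_{2}$ is closed in $\mathrm{Q}_{3}$, so the closure defining $\Lambda_{F}$ remains inside $\mathrm{Q}_{2}$. Now take any divergent sequence of $\Gamma_{u}$ and, refining to a subsequence, assume it tends simply to infinity; by the previous paragraph it is of balanced or mixed distortion. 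Let $q\in U$, so that $q\notin\Lambda_{F}$ and in particular $q\notin\Delta^{-}(g_{n})$. If $(g_{n})$ is of balanced distortion, part $1$ of Proposition \ref{sdb} gives $D_{(g_{n})}(q)\subset\Delta^{+}(g_{n})$. If $(g_{n})$ is of mixed distortion, then $q\notin\Delta^{-}(g_{n})$ places us in parts $1$ or $2$ of Proposition \ref{sdm}, whence $D_{(g_{n})}(q)\subset\{p_{+}\}\cup\Delta^{+}(g_{n})=\Delta^{+}(g_{n})$, using that $p_{+}\in\Delta^{+}(g_{n})$. In either case $D_{(g_{n})}(q)\subset\Delta^{+}(g_{n})\subset\Lambda_{F}$, so $D_{(g_{n})}(q)\cap U=\emptyset$. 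Taking the union over all such sequences and all $q\in U$ yields $D_{\Gamma_{u}}(U)\cap U=\emptyset$, which by the criterion above proves that $\Gamma_{u}$ acts properly discontinuously on $\mathrm{Q}_{3}-\Lambda_{F}$.

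The step I expect to be the main obstacle is the first one: one must ensure that the dynamical relation produced by a bounded-distortion sequence genuinely takes place \emph{inside} $\Theta$, so that the contradiction is with proper discontinuity on $\Theta$ rather than on a larger set. This is precisely what part $3$ of Proposition \ref{sda} supplies (the relation occurs for points $q$ with $z_{3}\neq 0$, i.e.\ in $\Theta$, and $D_{(a_{n})}(q)$ meets $\Theta$), so the argument reduces to the openness of $\Theta$ and the translation of ``dynamically related'' into ``infinitely many translates of a compact set meet it''. The remaining points — the containment $\Lambda_{F}\subset\mathrm{Q}_{2}$ and the case analysis for $q\in U$ — are bookkeeping with the normal forms already computed in Propositions \ref{sdb}–\ref{sdm}, the only care being to observe that the escape of dynamics out of $\mathrm{Q}_{2}$ (the light cone $C(p_{+})$ in part $3$ of Proposition \ref{sdm}) arises only from $q\in\Delta^{-}(g_{n})\subset\Lambda_{F}$ and therefore never affects $U$.
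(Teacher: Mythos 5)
Your proposal is correct and follows essentially the same route as the paper: bounded distortion is excluded via part 3 of Proposition \ref{sda} together with proper discontinuity on $\Theta$, and the second assertion follows from Propositions \ref{sdb} and \ref{sdm}, which confine accumulation points of compact sets of $\mathrm{Q}_{3}-\Lambda_{F}$ to the limit light geodesics inside $\Lambda_{F}$. Your writeup merely makes explicit what the paper leaves implicit (the subsequence/compactness translation of dynamical relatedness, and the observation that the escape to $C(p_{+})$ in the mixed case only occurs for points of $\Delta^{-}\subset\Lambda_{F}$), which are faithful elaborations rather than a different argument.
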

\noindent\textbf{Remark.} By Proposition \ref{p1}, if \mbox{$\Gamma \subset \mathrm{SL}(2,\mathbb{C})$} is  a torsion free, convex-cocompact,  (classical) Kleinian group and $u:\Gamma \to \mathrm{SL}(2,\mathbb{C})$ is a group morphism sufficiently close to the constant morphism, then $\Gamma_{u}$ acts properly discontinuously on $\Theta$, and so, the hypothesis of the Proposition are valid.

\begin{proof}
By Proposition \ref{sda}, every sequence of bounded distortion has points dynamically related in $\Theta$, it follows that $\Gamma_{u}$ does not contain such sequences. It follows that every sequence of $\Gamma_{u}$ which diverges simply to infinity is of balanced or mixed distortion.

By Propositions~\ref{sdb} and \ref{sdm}, every sequence of balanced or mixed distortion has two limit light geodesics associated to it (contained in $\mathrm{Q}_{2}$), one attractor and the other repeller, such that  the accumulation points of the orbits (associated with this sequence and with the sequence formed by the inverses)
of the compact sets in $\mathrm{Q}_{3}$, which do not intersect these two light geodesics, lie in these light geodesics. Then, $\Gamma_{u}$ acts properly discontinuously on the complement in $\mathrm{Q}_{3}$ of $\Lambda_{F}$.
\end{proof}

\textbf{Proof of Theorem \ref{res1}}
Let $\Gamma$ be a  torsion free, finitely-generated, (classical) Kleinian group  with domain of discontinuity $\Omega$ in $\mathbb{CP}^{1}$ and $u:\Gamma \to \mathrm{SL}(2,\mathbb{C})$  a group morphism.
Recall the definition of
$$\Gamma_{u}:= \Big\{ \big( \gamma,u(\gamma) \big): \gamma\in\Gamma \Big\},
$$
given in (\ref{gamma}).  Recall also that $\Gamma_{u}$ is torsion free.

Suppose that $\Gamma_{u}$ acts properly discontinuously on $\Theta$.   By Proposition \ref{fls},  all the sequences of $\Gamma_{u}$ which tend simply to infinity, are of balanced or mixed distortion and $\Gamma_{u}$ acts properly discontinuously on $\mathrm{Q}_{3}-\Lambda_{F}$, where $\Lambda_{F} \subset \mathrm{Q}_{2}$ was defined in the same Proposition.

We will prove that $U_{\Gamma} \subset \mathrm{Q}_{3}-\Lambda_{F}$, or equivalently, that
\begin{equation} \label{ast}
\big( \Omega \times\mathbb{CP}^{1} \big) \subset \big( \mathrm{Q}_{2} - \Lambda^{F} \big).
\end{equation}

First, we will prove that the light geodesics of  $\Lambda_{F}$ are vertical, as in the case of A. Guillot in \cite[pp. 224, 225]{GuillotD}, where  $u$ is the constant morphism. Supose that  $\Delta^{+}$ and $\Delta^{-}$ are the limit light geodesics which correspond to the divergent sequence $\big( g_{n}, u(g_{n}) \big)$ and that $\Delta^{+}$ and $\Delta^{-}$ are horizontal. We have two cases:
\begin{enumerate}
\item Suppose that  $\big( g_{n}, u(g_{n}) \big)$ is of mixed distortion and $C^{+}$ and $C^{-}$ are their attractor and repellor limit light cones. Then, by Proposition  \ref{sdm}, any point  $x$ of $\big( \Omega \times \mathbb{CP}^{1} \big) \cap  \Delta^{-}$, different from $p_{-}$, is dynamically related to any point in  $C^{+} \cap  \big( \Omega \times \mathbb{CP}^{1} \big)$. In particular, as   $\Delta^{+} \subset C^{+}$, then  $x$ is dynamically related to any point in $\Delta^{+} \cap  \big( \Omega \times \mathbb{CP}^{1} \big)$.
\item Suppose that  $\big( g_{n}, u(g_{n}) \big)$  is of balanced distorsion. Then, by the Proposition   \ref{sdb}, for each point  $q$ in $ \Delta^{+}\cap  \big( \Omega \times \mathbb{CP}^{1} \big)$, there exists a vertical light geodesic  $l_{q} \subset\mathrm{Q}_{2}$, such that $q$ is dynamically related to any point in $l_{q}-\Delta^{-}$.
 We will prove that there exists  $q \in \Delta^{+} \cap  \big( \Omega \times \mathbb{CP}^{1} \big)$, such that, $l_{q} \subset  \Omega\times\mathbb{CP}^{1}$.

For all  $n \in\mathbb{N}$,  let $\widetilde{u}_{n} i a_{n} i^{-1} \overline{u}_{n}$ be the Cartan decomposition of $\big( g_{n}, u(g_{n}) \big)$, where 
\begin{itemize}
\item $a_{n}$ is of the form   (\ref{am}), 
\item $\lambda_{n} \to + \infty$, 
\item $\mu_{n} \to + \infty$,
\item  $(\lambda_{n}-\mu_{n})$ converge to a point in $\mathbb{R}$,  
\item $(\widetilde{u}_{n})$ and $(\overline{u}_{n})$ are convergent sequences in
$\mathrm{SO}(4, \mathbb{C})$
\item  $i$ is an element of  $\mathrm{O}(4,\mathbb{C})$ which, restricted to   $\mathrm{Q}_{2}$, can be represented in coordinates as  (\ref{propi}).
\end{itemize}

Now, we will translate what we want to prove for $(g_{n})$ to the same problem, but for the sequence  $(a_{n})$. If   $q \in\Delta^{+}$, recall the construction of   $l_{q}$ (see Proposition \ref{sdb}). Let  $\widetilde{u}:=\lim \widetilde{u}_{n}$ and $\overline{u}:=\lim \overline{u}_{n}$. Then, there exist    $y_{0},\bar{y}_{0} \in\mathbb{CP}^{1}$ such that $$\Delta^{+}=\big\{(x,y_{0}):x \in\mathbb{CP}^{1} \big\}$$ and $$\Delta^{-}=\big\{(x,\bar{y}_{0}):x \in\mathbb{CP}^{1} \big\}.$$
By the aforementioned properties of $i$ and, since the restriction of $\widetilde{u}^{-1}$ and $\overline{u}$ to $\mathrm{Q}_{2}$ can be represented as  (\ref{defacxc}),  we get that the function $i^{-1} \widetilde{u}^{-1}$ can be represented in coordinates as:
$$(x,y) \mapsto \big(v_{1}(y), v_{2}(x) \big),
$$
where $v_{1},v_{2} \in \mathrm{SL}(2,\mathbb{C})$ and the function  $i^{-1} \overline{u}$ can be represented in coordinates as:
$$(x,y) \mapsto \big( w_{1}(y), w_{2}(x) \big),
$$
where  $w_{1},w_{2} \in \mathrm{SL}(2,\mathbb{C})$.

Then, by the formula given by Proposition  \ref{paokak}, it is enough to prove that there exists a point
  $q$ in  $\nabla^{+} \cap \big( \mathbb{CP}^{1} \times v_{2}(\Omega) \big)$, such that the  $l_{q}$ that corresponds to the sequence $(a_{n})$, is contained in $\mathbb{CP}^{1} \times w_{2}(\Omega)$. Then, by part $3$ of the Proposition \ref{sdb}, this is equivalent to show that there exists a point in
  $$ g \ v_{2}(\Omega) \cap g \  w_{2}(\Omega),
  $$
where  $g$ is a M\"obius transformation.

But, by Ahlfors Thereom  (see \cite{Ahlfors}),  $\Lambda$ has Lebesgue measure zero and as the  M\"obius transformations preserve the sets of Lebesgue measure zero, this is true; it follows that
$$g \ v_{2}(\Lambda) \cup g \  w_{2}(\Lambda)
$$
has Lebesgue measure zero.
\end{enumerate}

 In any case, there exist two points in  $\Omega \times \mathbb{CP}^{1}$ that are dynamically related to each other and correspond to the sequence $\big( g_{n}, u(g_{n}) \big)$.   \\

On the other hand, by Proposition \ref{l0}, we know that  $\Gamma_{u}$ acts properly discontinuously and uniformly on  $\Omega\times\mathbb{CP}^{1}$; then, there are no points dynamically related to each other in $\Omega\times\mathbb{CP}^{1}$; this, however, contradicts the previous paragraph.
Therefore, all the limit light geodesics of   $\Gamma_{u}$  are vertical. \\

Assume now that (\ref{ast}) is not true, so there exists a point
$$(x,y) \in \big( \Omega\times\mathbb{CP}^{1} \big) \cap \Lambda_{F}.
$$

 As $\Omega$ is open in $\mathbb{CP}^{1}$, there exists an attractor limit light geodesic  $\Delta^{+}$ which corresponds to a sequence
$\big( g_{n}, u(g_{n}) \big)$ of balanced or mixed distortion of
 $\Gamma_{u}$, such that, $\Delta^{+} \cap \big( \Omega\times\mathbb{CP}^{1} \big)
\neq\emptyset$. As the limit light geodesics of $\Gamma_{u}$ are vertical, then $\Delta^{+} \subset \big( \Omega\times\mathbb{CP}^{1} \big)$.

We will show that there exist two points in
 $\Omega\times \mathbb{CP}^{1}$ which are dynamically related to each other, and this will be a contradiction because we know that the action is properly discontinuous on $\Omega\times\mathbb{CP}^{1}$. There are two cases:
\begin{enumerate}
\item If $\big( g_{n},u(g_{n}) \big)$ is a mixed distortion sequence, then by Proposition \ref{sdm}, there exist a repeller limit light geodesic $\Delta^{-}$ and two limit light cones $C^{-}$, $C^{+}$ of $\big( g_{n}, u(g_{n}) \big)$, such that, $\Delta^{-} \subset C^{-}$ and $\Delta^{+} \subset C^{+}$ and if $y \in C^{-} - \Delta^{-}$, then
  $D_{\big( g_{n}, u(g_{n}) \big)}(y)=\Delta^{+}$. By Proposition \ref{lg}, we know that
  $\big( C^{-} \cap \mathrm{Q}_{2} \big) -\Delta^{-}$ is a light geodesic of the form $\mathbb{CP}^{1} \times \{z\}$ (minus one point), for some $z \in\mathbb{CP}^{1}$. This light geodesic (minus one point) intersects  $\Omega\times\mathbb{CP}^{1}$, and any point of this intersection is dynamically related to any point of $\Delta^{+} \subset \big(\Omega\times\mathbb{CP}^{1} \big)$.
\item If $\big( g_{n},u(g_{n}) \big)$ is a balanced distortion sequence, then, by Proposition \ref{sdb}, any point of
     $\big(\Omega\times\mathbb{CP}^{1} \big)-\Delta^{-}$ is dynamically related to a point of $\Delta^{+} \subset \big(\Omega\times\mathbb{CP}^{1} \big)$.
\end{enumerate}
This proves (\ref{ast}).

Now, suppose that $\Gamma\setminus\Omega$ is compact. We will prove that $U_{\Gamma} \subset \mathrm{Q}_{3}-\Lambda_{F}$, or equivalently, that (\ref{ast}) is not only an inclusion, but an equality.

Recall the $\mathrm{SO}(4,\mathbb{C})$-equivariant biholomorphism (\ref{de2}) between $\mathrm{Q}_{2}$ and $\mathbb{CP}^{1}\times\mathbb{CP}^{1}$. By Proposition \ref{l0}, $\Gamma_{u} \setminus ( \Omega\times\mathbb{CP}^{1})$ is a manifold. 

The projection $(x,y) \mapsto x, x \in\Omega, y \in \mathbb{CP}^{1}$, is well defined in the quotient

$$\Gamma_{u}\setminus(\Omega \times\mathbb{CP}^{1}) \to \Gamma\setminus\Omega, \qquad \big[ (x,y) \big] \mapsto [x].$$

In fact, it is a locally trivial fibration with compact fiber, so it is a proper map. As $\Gamma\setminus \Omega$ is compact, then
$\Gamma_{u} \setminus (\Omega\times\mathbb{CP}^{1})$ is compact.

Suppose that there exists an invariant open set $U$, which contains $U_{\Gamma}$, where $\Gamma_{u}$ acts properly discontinuously. Then, $\Gamma_{u} \setminus \big( \Omega\times\mathbb{CP}^{1} \big)$ is a compact subset of the Hausdorff space $\Gamma_{u} \setminus \big( U \cap \mathrm{Q}_{2} \big)$, so $\Gamma_{u} \setminus \big( \Omega\times\mathbb{CP}^{1} \big)$ is closed, but its complement $ \Gamma_{u} \setminus \big( U \cap \mathrm{Q}_{2} \big) - \Gamma_{u} \setminus \big( \Omega\times\mathbb{CP}^{1} \big) $ has empty interior (because the classical limit set $\Lambda$ of $\Gamma$ has empty interior, see \cite{Ahlfors}), so this is a contradiction. Then, $U_{\Gamma}$ is a maximal open set where $\Gamma_{u}$ acts properly discontinuously; in particular, $U_{\Gamma} = \mathrm{Q}_{3}-\Lambda_{F}$. $\qquad\qquad\Box$. \\

We just found a family of complex orthogonal Kleinian groups, now we will see that some of these groups are a generalization of the (classical) Schottky groups.

We say that a finitely generated discrete subgroup $\Gamma$ of  $\mathrm{PO}(5,\mathbb{C})$  is a  \emph{complex  orthogonal Schottky group of genus $g$}  if there exists a collection  $\{C_{1}, D_{1}, \dots, C_{g}, D_{g} \}$ of open sets of   $\mathrm{Q}_{3},$  with disjoint closures, and a finite set  of generators $\{s_{1}, \dots, s_{g} \}$ of $\Gamma$, such that, for all  $i=1, \dots, g$,
$$ s_{i}(C_{i}^{c})=\overline{D_{i}}.$$

If the group $\Gamma \subset \mathrm{SL}(2,\mathbb{C})$, of  Theorem \ref{res2}, is a (clasical) Schottky group, then $\Gamma$ acts, by means of the homomorphism $\psi$, defined in (\ref{aut}), as a complex orthogonal Schottky group. The proof of this goes as follows: 

Consider the action of
  $\{I\} \times \mathrm{SU}(2) \subset \mathrm{SL}(2,\mathbb{C}) \times \mathrm{SL}(2,\mathbb{C})$ in $\mathrm{Q}_{3}$, by means of the homomorphism $\psi$, defined in (\ref{aut}), the quotient space
 $\big( \{I\} \times \mathrm{SU}(2) \big) \setminus \mathrm{Q}_{3}$ and the quotient map
\begin{eqnarray*}
\delta: \mathrm{Q}_{3} & \to & \big( \{I\} \times \mathrm{SU}(2) \big) \setminus \mathrm{Q}_{3}, \\
z & \mapsto & [z].
\end{eqnarray*}

Then, $\delta$ is continuous, open and $\mathrm{SL}(2,\mathbb{C})$-equivariant and  can be represented by
\begin{eqnarray*} \label{eqdelta}
\delta: \mathrm{Q}_{3} & \to & \mathbb{H}^{3} \cup \mathbb{CP}^{1},  \\
 \delta([z_{1}:z_{2}:z_{3}:z_{4}:z_{5} ]) & = &  \left\{ \begin{array}{ccc}
             & \rho \Bigg( \left( \begin{array}{cc}
            z_{1} & z_{2} \\
            z_{4} & z_{5}
           \end{array}
    \right)  \Bigg), & \ [z_{1}:z_{2}:z_{3}:z_{4}:z_{5} ] \in \Theta, \nonumber\\
            & \Bigg[ Im \left( \begin{array}{cc}
            z_{1} & z_{2} \\
            z_{4} & z_{5}
           \end{array}
    \right) \Bigg],  & \ [z_{1}:z_{2}:0:z_{4}:z_{5} ] \in  \mathrm{Q}_{2}, \nonumber\\
            \end{array}
                \right.
\end{eqnarray*}
where the function $\rho$ was defined in (\ref{defrho}).

Finally, we pull back the (classical) Schottky group to get a complex  ortogonal Schottky group of dimension three, that is, if    $\{C_{1}, D_{1}, \dots, C_{g}, D_{g} \}$ is a collection of open sets of   $\mathbb{CP}^{1} \cup \mathbb{H}^{3},$  with disjoint closure,  $\{s_{1}, \dots, s_{g} \}$ is a finite set  of generators of $\Gamma$, such that, for all  $i=1, \dots, g$,
$ s_{i}(C_{i}^{c})=\overline{D_{i}}.$ If for all $i=1, \dots,g$, you define $B_{i}:=s_{i}^{-1}(C_{i})$ and $E_{i}:=s_{i}^{-1}(D_{i})$, then $\{B_{1}, E_{1}, \dots, B_{g}, E_{g} \}$ is a collection of open sets of   $\mathrm{Q}_{3},$  with disjoint closure, $\{s_{1}, \dots, s_{g} \}$  is finite set  of generators of $\Gamma$ (considered as a complex orthogonal Kleinian group, that is, identified with $\psi(\Gamma)$), such that, for all  $i=1, \dots, g$,
$ s_{i}(B_{i}^{c})=\overline{E_{i}}.$  \\

As Shottky groups are stable under small perturbations, every small perturbation of $\Gamma$, inside $\mathrm{PO}(5,\mathbb{C})$, is also a complex orthogonal Schottky group of dimension three. In fact, it is not difficult to see that, all the corresponding quotients spaces are diffeomorphic to each other. \\

We would like to mention that the examples of complex  orthogonal Kleinian groups of Theorem \ref{res1}, as well as the last examples of complex orthogonal Shottky groups, 
make sense in the real case; so, we also give examples of Lorentzian Kleinian groups: 

Let us denote by   $\mathbb{R}^{2,n}$ the space  $\mathbb{R}^{n+2}$ endowed with the quadratic form $q^{2,n}=-x^{2}_{1}-x^{2}_{2}+x^{2}_{3}+ \dots +x^{2}_{n+2}.$ The isotropic cone of $q^{2,n}$ is the subset of  $\mathbb{R}^{2,n}$ on which  $q^{2,n}$ vanishes. We call $C^{2,n}$ this isotropic cone, with the origin removed. Let's denote by $\pi$ the projection from $\mathbb{R}^{2,n}$, minus the origin,  on  $\mathbb{RP}^{n+1}$. The set  $\pi(C^{2,n})$ is a smooth hypersurface  $\Sigma$ of $\mathbb{RP}^{n+1}$. Recall that this hypersurface turns out to be endowed with a natural
 Lorentzian conformal structure such that its group of conformal transformations is  $\mathrm{PO}(2,n)$ (see \cite[p. 886]{Frances}). We call the \emph{Einstein universe} this hypersurface $\Sigma$, together with this canonical conformal structure, and we denote it  by $\mathrm{Ein}_{n}$.

Let us suppose that   $\Gamma \subset \mathrm{SL}(2,\mathbb{R})$ is a torsion free, finitely generated (classical) Fuchsian group, with $\Omega$ as discontinuity domain in  $\mathbb{S}^{1}$ and $u:\Gamma \to \mathrm{SL}(2,\mathbb{R})$ is a group homomorphism. We have that
 \begin{equation} \label{gammaru}
 \Gamma^{r}_{u}:= \Big\{ \big(\gamma, u(\gamma) \big) : \gamma \in \Gamma \Big\} \subset \mathrm{SL}(2,\mathbb{R}) \times \mathrm{SL}(2,\mathbb{R})
 \end{equation}
is a subgroup of $\mathrm{PO}(2,2)$. If $\Gamma_{u}$
acts properly discontinuously in  $\mathrm{AdS}_{3}:=\mathrm{Ein}_{3}-\mathrm{Ein}_{2}$, then $\Gamma_{u}$ acts properly discontinuouly on 
\begin{equation}  \label{wgamma}
W_{\Gamma}:=\mathrm{AdS}_{3} \cup \big( \Omega\times\mathbb{S}^{1} \big). 
\end{equation}
Even more, if  $\Gamma\setminus\Omega$ is compact, then   $W_{\Gamma}$ is maximal. As the limit set of  $\Gamma$ in $\mathbb{S}^{1}$ has Lebesgue measure zero (see \cite{tukia}), the proof of this assertion is essentially the same as the proof of Theorem \ref{res1}. 
Also, if $\Gamma \subset \mathrm{SL}(2,\mathbb{R})$ is a a (classical) Shottky group, then $\Gamma$ is a Lorentzian Shottky group of dimension three (as those of Frances in \cite[p.23]{Frances}),  every small perturbation of $\Gamma$ inside $\mathrm{PO}(5,\mathbb{R})$ is also a Lorentzian Schottky group of dimension three and all the corresponding quotients spaces are diffeomorphic to each other.


\section{Quotients} \label{s5}
Let $\Gamma$ be a torsion free, convex-cocompact, (classical) Kleinian group and $u: \Gamma \to \mathrm{SL}(2, \mathbb{C})$  a group morphism sufficiently close to the constant morphism. Recall the definition of the group $\Gamma_{u}$ given in Theorem \ref{res2}.  Recall also that, in Section \ref{ss3}, we saw that $\Gamma_{u}$ is a torsion free subgroup of $\mathrm{PO}(5,\mathbb{C})$.

 By Propositions \ref{p1} and
 \ref{res1}, if $u:\Gamma\to\mathrm{SL}(2,\mathbb{C})$ is a group morphism sufficiently close to the constant morphism $I$, then the quotient $M(u,\Gamma)$ of the action of $\Gamma_{u}$ on $U_{\Gamma}$ is a complex manifold, where $U_{\Gamma}$ was defined in Proposition \ref{res1}.
  The manifold $\Gamma_{I} \setminus U_{I}$ is the Guillot manifold (see Section \ref{ss3} and \cite[p. 224, 225]{GuillotD}).
  In this Section, we will prove that, for every group morphism $u: \Gamma \to \mathrm{SL}(2, \mathbb{C})$ close enough to the constant morphism, the manifold $M(u,\Gamma)$   is indeed compact and diffeomorphic to the Guillot manifold; so, it defines an uniformizable complex orthogonal  structure of dimension three, on the Guillot manifold, that is close to the Guillot structure. 
  
  We will also use this result to construct some close uniformizable complex projective  structures on other compact complex manifold of dimension three. \\

\noindent \textbf{Proof of Theorem \ref{res2}:}
Let us consider a torsion-free, convex-cocompact, (classical) Kleinian subgroup $\Gamma$ of $\mathrm{SL}(2,\mathbb{C})$. By Propositions \ref{p1} and \ref{res1}, we know that there exists an open neighborhood $\mathcal{V}$ of the constant morphism, such that, if $u \in\mathcal{V}$, then $\Gamma_{u}$ acts properly discontinuously on $U_{\Gamma}$. We will first prove that for $u \in \mathcal{V}$,  $\Gamma_{u}$ acts uniformly on $U_{\Gamma}$; that is, the action
\begin{eqnarray} \label{aupd2}
\Gamma \times ( \mathcal{V} \times U_{\Gamma} ) & \to & \mathcal{V} \times U_{\Gamma} \nonumber\\
\big( \gamma,(u,x) \big) & \to & \Big( u, \big(\gamma, u(\gamma)\big)x \Big) \nonumber\\
\end{eqnarray}
is properly discontinuous. Then, we will consider a resolution $r:X \to \mathcal{V}$ of singularities of $\mathcal{V}$, and define an action of $\Gamma$ on $X \times U_{\Gamma}$, such that, the action on  $\{x\} \times U_{\Gamma}$ coincides with the action on $\{r(x)\} \times U_{\Gamma}$. Finally, we will see that  $\Gamma$  acts properly discontinuously on $X \times U_{\Gamma}$ and the quotient of the projection of $X \times U_{\Gamma}$ onto its first coordinate is a locally trivial fibration whose fibers are the manifolds $M(u,\Gamma)$. \\

Let us consider the restriction to $\mathrm{SL}(2,\mathbb{C})$ of any Euclidian norm in $\mathbb{C}^{4}$ and let us denote it by $\arrowvert\arrowvert \cdot \arrowvert\arrowvert$. As by definition $\Theta$ is biholomorphic to $\mathrm{SL}(2,\mathbb{C})$, $\arrowvert\arrowvert \cdot \arrowvert\arrowvert$ defines a norm in $\Theta$; we will also denote it by $\arrowvert\arrowvert \cdot \arrowvert\arrowvert$.

Recall that $\Theta$ is biholomorphic 
Let us consider a finite set of generators
 $\mathcal{G}$  of $\Gamma$, any norm    $|| \cdot ||$ in $\Theta=\mathrm{Q}_{3}-\mathrm{Q}_{2}$ and  for all $\delta>0$, the neighborhood
 $$V_{\delta}(I):=\Big\{ g \in \mathrm{Hom} \big( \Gamma,\mathrm{SL}(2,\mathbb{C}) \big):\forall s \in \mathcal{G}, \ ||g(s)-I||  \le \delta \Big\},$$
  of the constant morphism, with respect to the compact-open topology, where $I$ is the identity matrix.\\

 We will prove that there exists $\epsilon>0$, such that, for every convergent sequence $(u_{n},z_{n})$ in $V_{\epsilon}(I) \times U_{\Gamma}$ and for every divergent sequence $( g_{n} )$ of $\Gamma$, such that $( g_{n},u_{n}(g_{n}))$ diverges simply to infinity, then $\big( ( g_{n},u_{n}(g_{n}) ) z_{n} \big)$ does not converge in $U_{\Gamma}$. This will imply that, if $\mathcal{V}:=V_{\epsilon}(I)$, then (\ref{aupd2}) is properly discontinuous. Note that, by Proposition \ref{res1}, this is true for constant sequences $(u_{n})$.

Recall the $\mathrm{SO}(4,\mathbb{C})$-equivariant biholomorphism  between $\Theta$ and $\mathrm{SL}(2,\mathbb{C})$. By Proposition \ref{p1}, there exists $\epsilon>0$,  such that, for all convergent sequences $\big( u_{n},z_{n} \big)$ in
$V_{\epsilon}(I) \times \mathrm{SL}(2,\mathbb{C})$, and for all divergent sequences $(g_{n})$, the sequence
$\Big( \big( g_{n}, u_{n}(g_{n}) \big) z_{n} \Big)$ is not convergent in $\Theta$. \\

Suppose $(u_{n})$ is in $V_{\epsilon}(I)$, $(g_{n})$ is divergent in $\Gamma$ and $\big( g_{n}, u_{n}(g_{n}) \big)$ diverges simply to infinity in  $\mathrm{PO}(5,\mathbb{C})$.
Then, by the last paragraph, there are no points in $\Theta$ dynamically related to each other which correspond to the sequence $\big( g_{n}, u_{n}(g_{n}) \big)$. 

As sequences of bounded distortion have points dynamically related in $\Theta$ (see Proposition \ref{sda}) then, $\big( g_{n}, u_{n}(g_{n}) \big)$  is of balanced or mixed distortion and, by Propositions \ref{sdb} and \ref{sdm}, it has associated two limit light geodesics contained in $\mathrm{Q}_{2}$, one attractor $\Delta^{+}$ and other repeller $\Delta^{-}$.

We claim that $\Delta^{+}$ and $\Delta^{-}$ are vertical and contained in $\Lambda\times\mathbb{CP}^{1}$. The proof of this is essentially the same  (see Proposition \ref{res1}) as the proof   that the limit light geodesics of  $\Gamma_{u}$ are vertical and contained in $\Lambda\times\mathbb{CP}^{1}$ (it is only necessary to change the sequence $\big( g_{n}, u( g_{n}) \big)$ by the sequence $\big( g_{n}, u_{n}( g_{n}) \big)$ and use Proposition \ref{l0}).
Then, for each convergent sequence $(z_{n})$ of $U_{\Gamma}$, the sequence $\big( \big( g_{n}, u_{n}(g_{n}) \big) z_{n} \big)$ does not converge in  $U_{\Gamma}$. Then, (\ref{aupd2}) is properly discontinuous.
\\

Now, note that the projection of $\mathcal{V} \times U_{\Gamma}$ onto its first factor, defines the continuous function
\begin{eqnarray*}
t: \Gamma \setminus \big( \mathcal{V} \times U_{\Gamma} \big) & \to &  \mathcal{V}, \\
\ [(u,y)] & \mapsto u.
\end{eqnarray*}

If the algebraic variety $\mathcal{V}$ has no singularities in a neighborhood of the identity, i.e., it is a manifold at the identity (as, for example, when $\Gamma$ is a classical Schottky group), then, by the Ehresmann Fibration Lemma, $t$ is a locally trivial fibration and so all $M(u,\Gamma)$ are diffeomorphic to each other, for $u$ sufficiently close to the constant morphism. However, $\mathcal{V}$ can have singularities  arbitrarily close to the identity (see, for example, the Fuchsian groups of \cite[p.\,567]{GoldmanCC}); if this is the case, consider a resolution of the singularities (see \cite{Hironaka}) of $\mathcal{V}$, that is, let $r: X \to \mathcal{V}$ be an holomorphic surjective proper map, where $X$ is a complex manifold. Let us consider $X \times U_{\Gamma}$ as a differential manifold and define the action
\begin{eqnarray} \label{acgxxu}
\Gamma \times \big( X \times U_{\Gamma} \big)  & \to &   X \times U_{\Gamma},  \nonumber\\
\Big( \gamma, \big( x,y \big) \Big) & \mapsto & \Big( x, \big( \gamma ,   r(x) (\gamma) \big) (y) \Big),
\end{eqnarray}
 and  the map
\begin{eqnarray*}
r \times I:X \times U_{\Gamma} & \to & \mathcal{V}  \times U_{\Gamma}, \\
(x,y) & \mapsto & \big( r(x), y \big),
\end{eqnarray*}
which is $\Gamma$-equivariant:
$$\big( r \times I \big) \big( \gamma, (x,y) \big)= \Big( r(x), \big( \gamma ,   r(x) (\gamma) \big) (y) \Big)= \Big( \gamma, \big( r(x),y \big) \Big)=\big( \gamma, (r \times I) ( x,y ) \big).
$$

As $\Gamma$ acts properly discontinuously on $\mathcal{V} \times U_{\Gamma}$, then it acts properly discontinuously on $X \times U_{\Gamma}$. As the projection of $X \times U_{\Gamma}$ onto its first factor is $\Gamma$-invariant, then it defines a submersion
\begin{eqnarray*}
s: \Gamma \setminus \Big( X \times U_{\Gamma} \Big) & \to &  X,  \\
\ [(x,u)] & \mapsto x.
\end{eqnarray*}

Let us consider any $y \in r^{-1}(I)$, then as $s^{-1}(y)$ was defined to be the Guillot manifold $t^{-1}(I)$, then it is compact. So, by the Ehreshmann Fibration Lemma, $s$ is trivial bundle in a neighborhood $\mathcal{U}$ of $y$. As every fiber $s^{-1}(u)$, of $s$ over $u \in \mathcal{U}$,  was defined to be as the fiber $t^{-1} \big( r(u) \big)$, of $t$ over $r(u)$, and as $r$ is open,  then $r(\mathcal{U})$ is an open neighborhood of $I$ such that for all $v \in  r(\mathcal{U})$, $t^{-1}(v)$ is compact and diffeomorphic to the Guillot manifold. $\Box$ \\

Let us consider the geometry  $\big(\mathrm{Q}_{3}, \mathrm{PO}(5,\mathbb{C}) \big)$, the intermediate covering  $U_{\Gamma}$ of the Guillot manifold  $\Gamma\setminus U_{\Gamma}$ and the developing map
\begin{eqnarray*}
D: U_{\Gamma} & \to & \mathrm{Q}_{3}, \\
x & \mapsto & x,
\end{eqnarray*}
defined in this intermediate covering. Let us suppose that the hypothesis of the last Theorem are satisfied, for each  homomorphism
 $u: \Gamma \to \mathrm{SL}(2,\mathbb{C})$, close enough to the constant morphism,  consider the representation
\begin{eqnarray*}
\rho_{u}: \Gamma & \to &  \mathrm{SO}(4,\mathbb{C}) \subset  \mathrm{PO}(5,\mathbb{C}), \\
\gamma & \to & \big( \gamma,  u(\gamma) \big),
\end{eqnarray*}
 of $\Gamma$ induced by $u$.

Let us consider the deformation space of  $\big(\mathrm{Q}_{3}, \mathrm{PO}(5,\mathbb{C}) \big)$-structures on the Guillot manifold (see \cite[p. 13]{GoldmanGE}), then
   $( D, \rho_{u})$ determines an unique uniform complex orthogonal  structure of dimension three on the  Guillot manifold which is close to the Guillot structure. Also, if
 $v: \Gamma \to \mathrm{SL}(2,\mathbb{C})$ is another group morphism close enough to the constant morphism, then  $\rho_{u}$ is conjugate to $\rho_{V}$ if and only if $u$ is conjugate to $v$. Then, the complex  orthogonal  structures determined by  $(D,\rho_{u})$ and $(D,\rho_{v})$ are the same if and only if  $u $ and $v$ are conjugate. \\

It should be pointed out that there are examples of (classical) Kleinian groups for which there is a family of such mutually non-conjugate group morphisms from $\Gamma$ to $\mathrm{SL}(2,\mathbb{C})$:

 If $\Gamma$ is a (classical)  Schottky group of genus  $g$, then the space  $\mathrm{Hom}(\Gamma, \mathrm{SL}(2,\mathbb{C}))$ of group morphisms from $\Gamma$ to $\mathrm{SL}(2,\mathbb{C})$ is a complex manifold, of dimension $3g$, biholomorphic to $\mathrm{SL}(2, \mathbb{C})^{g}$.
Since we have two free 3-dimensional parameters and the conjugation is by a family of dimension three, there exists, even up to conjugation, an infinite family of homomorphisms arbitrarily close to the constant morphism.

If $\Gamma$ is a (classical) Fuchsian group, then, as any matrix commute with itself, for every pair of matrices $A$ and $C$ in $\mathrm{SL}(2,\mathbb{C})$, the equation $[A,A][C,C]=I$ is satisfyed; then, by \cite[p. 567]{GoldmanCC}, $\Gamma$ is a Fuchsian group. Since we have, at least, two free 3-dimensional parameters and the conjugation is by a family of dimension three, there exists, even up to conjugation, an infinite family of homomorphisms arbitrarily close to the constant morphism. \\

We would like to mention that, in the real case, there is a weaker version of  the last Theorem; in particular, if $\Gamma\subset \mathrm{SL}(2,\mathbb{R})$ is a torsion free, convex-cocompact (classical) Kleinian group with domain of discontinuity $\Omega$ in $\mathbb{S}^{1}$; then, for all group morphisms $u: \Gamma \to \mathrm{SL}(2,\mathbb{R})$ sufficiently close to the identity morphism, then $\Gamma^{r}_{u}$, which was defined in (\ref{gammaru}), acts properly discontinuously on $W_{\Gamma}$, defined in (\ref{wgamma}).

Now, we will consider some quotients of the manifolds $M(u,\Gamma)$ of Theorem  \ref{res2} to construct uniformizable complex projective  structures locally modeled on ($\mathbb{CP}^{4}, \mathrm{PGL}(4, \mathbb{C})$) on a related compact complex manifold of dimension three. \\

\noindent\textbf{Proof of Theorem \ref{corart}:} \\
Let $\Gamma \subset \mathrm{SL}(2,\mathbb{C})$ be a torsion free, convex-cocompact, (classical) Kleinian group with domain of discontinuity $\Omega$ in $\mathbb{CP}^{1}$. We will prove that  there exists a $\Gamma_{u}$-equivariant continuous, open and proper function $f$ from
  $\mathrm{Q}_{3}$ onto  $\mathbb{CP}^{3}$. By Theorem \ref{res2}, this will imply that if $u$ is a group morphism sufficiently close to the constant morphism, then $\Gamma_{u}$ acts properly discontinuously on $V_{\Gamma}:=f(U_{\Gamma})$ and the quotient is compact. As $I \times f$ is proper, then $\Gamma$ acts properly discontinuously on $\mathcal{V} \times V_{\Gamma}$, where this action is   the action of $\Gamma_{u}$ on $V_{\Gamma}$ in the fibers $\{u\} \times V_{\Gamma}$. Finally, we will consider a resolution of of singularities $r:X \to \mathcal{V}$ and follow the same reasoning of Theorem \ref{res2} to prove that, for all $u \in\mathcal{V}$, all the quotients $\Gamma_{u} \setminus V_{\Gamma}$ are diffeomorphic to each other.\\

 Let us consider the involution
\begin{eqnarray*}
 j: \mathrm{Q}_{3} & \to & \mathrm{Q}_{3}, \\
 \ [z_{1}:z_{2}:z_{3}:z_{4}:z_{5} ] & \mapsto & [-z_{1}:-z_{2}:z_{3}:-z_{4}:-z_{5} ],
\end{eqnarray*}
which generates the subgroup $\Sigma:=\langle j \rangle = \{ j, I \}$ of $\mathrm{O}(4,\mathbb{C})$. The composition of the quotient map $Q_{3} \to \Sigma\setminus \mathrm{Q}_{3}$, defined by the action of $\Sigma$ on $\mathrm{Q}_{3}$, and the biholomorphism
\begin{eqnarray*}
 \Sigma \setminus \mathrm{Q}_{3} & \to & \mathbb{CP}^{3}, \\
 \big| [z_{1}:z_{2}:z_{3}:z_{4}:z_{5} ] \big| & \mapsto & [z_{1}:z_{2}:z_{4}:z_{5} ],
\end{eqnarray*}
is a continuous and proper function $f$, where $|z|$ denotes the equivalence class of $z$. Note that $f$ restricted to $\Theta$ is the usual $2$ to $1$ covering of $\mathrm{PSL}(2,\mathbb{C})$ by $\mathrm{SL}(2,\mathrm{C})$ and restricted to $\mathrm{Q}_{3}$ is the identity.
 \\

 The action of $\big( \mathrm{SL}(2,\mathbb{C}) \times \mathrm{SL}(2,\mathbb{C}) \big)$ on $\mathrm{Q}_{3}$ induces, in a unique way, an action of $\mathrm{SL}(2,\mathbb{C}) \times \mathrm{SL}(2,\mathbb{C})$   on $\mathbb{CP}^{3}$ such that $f$ is $\big(\mathrm{SL}(2,\mathbb{C}) \times \mathrm{SL}(2,\mathbb{C})\big)$-equivariant. Even though this action is not faithful, it defines an holomorphic homomorphism $\tau$ from $\mathrm{SL}(2,\mathbb{C}) \times \mathrm{SL}(2,\mathbb{C})$ to $\mathrm{PO}(4,\mathbb{C})$, whose image is the projectivization $\mathrm{PSO}(4,\mathbb{C})$ of the orthogonal matrices of determinant one, and whose kernel is $\big\{ (I,I), (-I,I),(I,-I),(-I,-I) \big\}$. As $-I \notin \Gamma$, where $I$ is the identity matrix, then the restriction of $\tau$ to $\Gamma_{u}$ is a monomorphism. As usual, we identify the domain and the image. 
 
 As $\Gamma_{u}$ acts properly discontinuously on $U_{\Gamma}$, it also acts properly discontinuously on $V_{\Gamma}:=f(U_{\Gamma})$. So, $\Gamma_{u} \setminus V_{\Gamma}$ is a complex manifold and $f$ defines a continuous function on the quotient
\begin{eqnarray*}
  \Gamma_{u} \setminus U_{\Gamma} & \to &  \Gamma_{u} \setminus V_{\Gamma}, \\
  \ [w] & \mapsto & [f(w) ].
\end{eqnarray*}
In fact, the quotient  $\Gamma_{u} \setminus f(U_{\Gamma})$ is a quotient of the manifold  $\Gamma_{u} \setminus U_{\Gamma}$: For all morphism $u: \Gamma \to  \mathrm{SL}(2,\mathbb{C})$, the involution
 $j$ commutes with the action of  $\Gamma_{u}$ in  $\mathrm{Q}_{3}$ and then, it is well defined in the quotient  $\Gamma_{u} \setminus U_{\Gamma}$, that is, defines a transformation $\widetilde{j}:\Gamma_{u} \setminus \mathrm{Q}_{3} \to \Gamma_{u}\setminus \mathrm{Q}_{3}, \ \arrowvert [z_{1}:z_{2}:z_{3}:z_{4}:z_{5}] \arrowvert \mapsto   \arrowvert [-z_{1}:-z_{2}:z_{3}:-z_{4}:-z_{5}] \arrowvert $. The group $\widetilde{\Sigma}$ generated by $\widetilde{j}$ acts freely and properly discontinuously on   $\Gamma_{u} \setminus U_{\Gamma}$ and, as the actions of
 $\Sigma$ and $\Gamma_{u}$ comute, then, the quotients  $\widetilde{\Sigma} \setminus (\Gamma_{u} \setminus U_{\Gamma} )$ and $\Gamma_{u} \setminus f(U_{\Gamma})$ are biholomorphic. \\

It also happens that $V_{\Gamma}$ is maximal; otherwise, we could pull back an invariant open set $V \supset V_{\Gamma}$, where $\Gamma_{u}$ acts properly discontinuously and contradict the maximality of $U_{\Gamma}$. \\

By Theorem \ref{res2}, there exists an open neighborhood $\mathcal{V}$ of the constant morphism, such that, if  the homomorphism $u:\Gamma \to \mathrm{SL}(2,\mathbb{C})$ belongs to it, then $\Gamma_{u} \setminus U_{\Gamma}$ is compact, so $\Gamma_{u} \setminus V_{\Gamma}$ is also compact. 

Now, let us define the following action
\begin{eqnarray*}
  \Gamma\times\big( \mathcal{V} \times V_{\Gamma} \big) & \to & \big( \mathcal{V} \times V_{\Gamma} \big),  \\
  \big( \gamma, (v,x)  \big) & \mapsto & \Big(v, \big(\gamma,v(\gamma) \big) x \Big).
\end{eqnarray*}

As $I \times f$ is proper, the last action is properly discontinuous. Finally, let us consider a resolution of singularities $r:X \to \mathcal{V}$, where $X$ is a complex manifold and $r$ is holomorphic, then, by the same reasoning of Theorem \ref{res2}, for all $u \in\mathcal{V}$, all the quotients $\Gamma_{u} \setminus V_{\Gamma}$ are diffeomorphic to each other. $\Box$ \\

Let us suppose that the hypothesis of the last Theorem are satisfied. By the same arguments of Section \ref{s4}, but applied to $\mathbb{CP}^{3}$, instead of $\mathrm{Q}_{3}$, if follows that the vertical light geodesics of $\Lambda \times \mathbb{CP}^{1}$ are also attractor and repeller limit light geodesics for the action of $\Gamma_{u}$ on $V_{\Gamma}$. \\

 Let us
 consider the geometry  $\big(\mathbb{CP}^{3}, \mathrm{PGL}(4,\mathbb{C}) \big)$. We denote by $\mathcal{M}$ the differentiable manifold defined by the quotients $\Gamma_{u}\setminus V_{\Gamma}$ of last Theorem. This manifold was discovered by Guillot. Also, Guillot found the uniformizable complex projective  structure induced by $\Gamma_{I}\setminus V_{\Gamma}$. As we said in the Introduction, we call the $G$ manifold the quotient manifold (both differentiable and complex) and the $G$ structure the complex projective structure determined by it.

  Let us  consider the intermediate covering $V_{\Gamma}$ of  $\mathcal{M}$ and the developing map
\begin{eqnarray*}
D: V_{\Gamma} & \to & \mathrm{Q}_{3}, \\
x & \mapsto & x,
\end{eqnarray*}
defined in the intermediate covering. Let us suppose that the hypothesis of the last Theorem are satisfied, for each  group morphism
 $u: \Gamma \to \mathrm{SL}(2,\mathbb{C})$ close enough to the constant morphism,  consider the representation
\begin{eqnarray*}
\rho_{u}: \Gamma & \to &  \mathrm{SO}(4,\mathbb{C}) \subset  \mathrm{PO}(5,\mathbb{C}), \\
\gamma & \to & \big( \gamma,  u(\gamma) \big),
\end{eqnarray*}
 of $\Gamma$ induced by $u$.

Let us consider the deformation space of  $\big(\mathbb{CP}^{3}, \mathrm{PGL}(4,\mathbb{C}) \big)$-structures on  $\mathcal{M}$ (see \cite[p. 13]{GoldmanGE}), then
   $( D, \rho_{u})$ determines an unique uniformizable complex projective  structure on $\mathcal{M}$. Also, if
 $v: \Gamma \to \mathrm{SL}(2,\mathbb{C})$ is also a group morphism close enough to the constant morphism, then  $\rho_{u}$ is conjugated to $\rho_{V}$ if and only if $u$ is conjugated to $v$. Then, the complex projective  structures determined by  $(D,\rho_{u})$ and $(D,\rho_{v})$ are the same if and only if  $u $ and $v$ are conjugated. \\
 
 It should be pointed out that, in this case, we also have that Schottky and Fuchsian groups are examples of (classical) Kleinian groups for which there is a family of such mutually non-conjugated homomorphisms from $\Gamma$ to $\mathrm{SL}(2,\mathbb{C})$.

\bibliography{refs3}
\bibliographystyle{amsplain}
 \end{document}